\numberwithin{equation}{section}
\def\ep{\epsilon}
\def\R{\mathbb{R}}
\def\C{\mathbb{C}}
\def\Z{\mathbb{Z}}
\def\N{\mathbb{N}}
\def\Z{\mathbb{Z}}
\def\ep{\epsilon}
\DeclareMathOperator{\supp}{supp}
\newtheorem{theorem}{Theorem}[section]
\newtheorem{lemma}[theorem]{Lemma}
\newtheorem{proposition}[theorem]{Proposition}
\newtheorem{definition}[theorem]{Definition}
\newtheorem{remark}[theorem]{Remark}
\def\beq{\begin{equation}}
\def\eeq{\end{equation}}
\def\beq{\begin{equation}}
\def\eeq{\end{equation}}
\begin{document}

\title[The periodic Euler--Poisson system for electrons in 2D]{Long-term regularity of the periodic Euler--Poisson system for electrons in 2D}

\author{Fan Zheng}
\address{Princeton University}
\email{fanzheng@math.princeton.edu}

%\thanks{The first author was supported in part by NSF grant DMS-1265818. The second author was supported in part by NSF grant DMS-1500958.}

\begin{abstract}
We study a basic plasma physics model--the one-fluid Euler-Poisson system on the square torus, in which a compressible electron fluid flows under its own electrostatic field. In this paper we prove long-term regularity of periodic solutions of this system in 2 spatial dimensions with small data.

Our main conclusion is that on a square torus of side length $R$,
if the initial data is sufficiently close to a constant solution,
then the solution is wellposed for a time at least $R/(\epsilon^2(\log R)^{O(1)})$, where $\epsilon$ is the size of the initial data.
\end{abstract}

\maketitle

\setcounter{tocdepth}{1}

\tableofcontents

\date{\today}

\maketitle

\section{Introduction}\label{Intro}

\subsection{Derivation of the equation}
A plasma is a collection of charged particles interacting with each other via the Coulomb forces. The plasma is the most ubiquitous form of matter in the universe, from heavenly bodies such as interstellar hydrogen and the interior of stars, to terrestrial objects like fluorescent tubes and neon signs. In addition, recent advances in controlled nuclear fusion requires a better understanding of the behavior of a plasma confined to a bounded region, for example, a tokamak fusion reactor, which resembles a torus in shape. We refer the interested reader to \cite{Bit} and \cite{DelBer} for physics references in book form.

The Euler--Poisson system describes the motion of a nonrelativistic warm adiabatic plasma consisting of electrons and ions. We assume the following in the derivation of the system.

\begin{itemize}
\item The plasma is {\it nonrelativistic}, so its dynamics follows Newton's laws.
Also, the main interaction between the ions is via the electrostatic field,
which obeys the {\bf Poisson equation}; the magnetic interaction is much smaller and can be neglected.

\item The plasma is {\it warm}, so we need to consider the thermal pressure of the electrons, arising from the temperature of the plasma.
The relation among the density, temperature, and pressure of the plasma satisfies the constitutive equation.

\item The plasma is {\it adiabatic}. This means no heat flow within the plasma.
The temperature of the plasma, though, will still vary with time. This is because the plasma is compressible, and when it is compressed, the mechanical work done on it is converted to thermal energy, so the plasma will heat up, and vice versa.

\item The plasma consists of free electrons and ions. As ions are much heavier than electrons (for example, $m_p=1836m_e$,) they move much more slowly than electrons, so we are mainly interested in the motion of the {\it electrons}. Hence we can model the motion of the plasma by a single fluid obeying the {\bf compressible Euler equation}.
\end{itemize}

The above leads to the {\bf Euler--Poisson one-fluid} model of the plasma.
Let $e$ denote the elementary charge, and $m$ the mass of the electron.
They are fundamental physical constants whose values are fixed throughout.
The dynamical variables are the electron density $n$, velocity $v$,
pressure $p$, and the electrostatic field $\phi$ which the electrons produce.

By the conservation of charge, $n$ satisfies the {\bf continuity equation}
\begin{equation}\label{cont-eq}
n_t+\nabla\cdot(nv)=0.
\end{equation}
The motion of the electrons satisfies Newton's second law
\begin{equation}\label{Newton2}
nmD_tv=F.
\end{equation}
On the left-hand side of (\ref{Newton2}) is the material derivative of $v$
\[
D_tv=v_t+(v\cdot\nabla)v,
\]
which equals the acceleration of an electron at a given point.
On the right-hand side of (\ref{Newton2}) is the net force acting on that electron
\[
F=-\nabla p+(-ne)(-\nabla\phi)=-\nabla p+en\nabla\phi
\]
where $p$ is the thermal pressure of the electrons, and $\phi$ is the electrostatic potential they produce. Thus we obtain the {\bf Euler equation} for the plasma:
\begin{equation}\label{Euler-eq}
nm(v_t+(v\cdot\nabla)v)=-\nabla p+en\nabla\phi.
\end{equation}
The electrostatic potential $\phi$ is related to $n$ by the {\bf Poisson equation}
\begin{equation}\label{Poisson-eq}
\ep_0\Delta\phi=e(n-n_0)
\end{equation}
where $\ep_0$ is the vacuum permittivity, and $n_0$ is the charge density of the nuclei.

To close the system we need to find the pressure $p$. The ideal gas law says
\begin{equation}\label{gas-law}
p=nk_BT
\end{equation}
where $k_B$ is the Boltzmann constant, and $T$ is also a function of $n$.
The work required to compress the plasma is $-pdV$. By conservation of energy,
this work is converted to the thermal energy of the plasma. Hence
\begin{equation}\label{heat-work}
cnVdT=dQ=-pdV
\end{equation}
where $c$ is the heat capacity of an electron. By the equipartition theorem,
$c=dk_B/2$, where $d$ counts the degree of freedom of the electrons.
Putting this and (\ref{gas-law}) into (\ref{heat-work}) we get
\[
\frac{d}{2}VdT=-TdV
\]
which one can integrate to obtain $\ln V+(d/2)\ln T=$ const,
or $VT^{d/2}=$ const, so $T\propto V^{-2/d}\propto n^{2/d}$.
Again using (\ref{gas-law}) we get
\[
p\propto n^{1+d/2}.
\]
Electrons have no internal degrees of freedom, so $d$ is simply the number of spatial dimensions. In our case $d=2$, and the constitutive equation reads
\begin{align}\label{cnst-eq}
p&=\theta n^2/2,%&
%T&=\frac{p}{n_-k_B}=\frac{\th n_-}{2k_B}
\end{align}
where $\theta=2k_BT_0/n_0$ depends on the initial condition of the plasma.
We can use (\ref{cnst-eq}) to eliminate the pressure $p$ in (\ref{Euler-eq}).
For simplicity we assume $\theta$ doesn't depend on the position. Then we obtain
\begin{equation}\label{EP-system2}
\begin{aligned}
n_t+\nabla\cdot(nv)&=0,\\
m(v_t+(v\cdot\nabla)v)+\theta\nabla n&=e\nabla\phi,\\
\ep_0\Delta\phi&=e(n-n_0).
\end{aligned}
\end{equation}

\subsection{Conservation laws}
The continuity equation (\ref{cont-eq}) implies the {\bf conservation of charge}
\begin{equation}\label{cnsrv-charge}
\frac{d}{dt} \int n=\int -\nabla\cdot(nv)=0.
\end{equation}
Also conserved is the {\bf energy}
\begin{equation}\label{E-def}
E=\frac{1}{2}\int mnv^2+\ep_0(\nabla\phi)^2+\theta n^2,
\end{equation}
where the three terms on the right correspond to the kinetic energy, electrostatic energy and thermal energy of the plasma respectively.

Next we look at the evolution for the {\bf vorticity} $\omega=\nabla\times v$.
Taking the curl of the second equation in (\ref{EP-system2}) we get its evolution equation
\[
\omega_t=-(v\cdot\nabla)\omega-\omega(\nabla\cdot v).
\]
If the flow is assumed to be {\bf irrotational} (i.e., $\omega=0$) at the beginning,
then it remains so forever.

Under the assumption of irrotationality, we have $(v\cdot\nabla)v=\nabla(|v|^2)/2$.
Then the second equation in (\ref{EP-system2}) shows that $v_t$ is a gradient.
Integrating over the whole torus gives the {\bf conservation of momentum}
\begin{equation}\label{cnsrv-mom}
\frac{d}{dt}\int v=0.
\end{equation}

\subsection{Normalization}
We can rescale the variables to normalize all the constants in the system (\ref{EP-system2}) to 1. To do so, we first list the dimensions of all the physical constants:
\begin{align*}
&\text{Constant} & & m & & e & & n_0 & & \theta & & \ep_0\\
&\text{Dimension} & & MN^{-1} & & CN^{-1} & & NL^{-3} & & L^5T^{-2}MN^{-2} & & L^{-3}T^2M^{-1}C^2,
\end{align*}
where $L$, $T$, $N$, $M$ and $C$ stand for the dimensions of length, time, number (of electrons), mass and charge, respectively. Then we list the dimensions of all the physical variables, and the substitution that makes them dimensionless.
\begin{align*}
&\text{Variable} & & x & & t & & n & & v & & \phi\\
&\text{Dimension} & & L & & T & & NL^{-3} & & LT^{-1} & & L^2T^{-2}MC^{-1}\\
&\text{Substitution} & & \sqrt{\frac{\ep_0\theta}{e^2}}x' & & \sqrt{\frac{\ep_0m}{e^2n_0}}t' & & n_0(1+\rho) & & \sqrt{\frac{n_0\theta}{m}}v' & & \frac{n_0\theta}{e}\phi'.
\end{align*}
Now all the constants are normalized to 1, and the system (\ref{EP-system2}) becomes
\begin{equation}\label{EP-system}
\begin{aligned}
\rho_t+\nabla\cdot((1+\rho)v)&=0,\\
v_t+(v\cdot\nabla)v+\nabla\rho&=\nabla\phi,\\
\Delta\phi&=\rho,
\end{aligned}
\end{equation}

\begin{remark}
There is no more scaling symmetry to be exploited. Hence $R$ is a genuine parameter of the system, and our results depend on $R$ explicitly.
\end{remark}

The quantity $X_0=\sqrt{\ep_0\theta/e^2}$ is called the {\bf Debye length}.
It is the length scale beyond which local fluctuation in charge density
(e.g., near the boundary of the container) does not have a significant effect.
Hence we assume
\[
R=\text{size of torus}/X_0\gg 1.
\]
We also assume the plasma is {\bf charge-neutral}, i.e.,
\[
\int \rho=0
\]
so that the third equation in (\ref{EP-system}) is solvable on the torus.
By a change of reference frame ($\tilde v=v-v_0$, $\tilde\rho(x,t)=\rho(x+v_0t,t)$),
we can also assume the {\bf zero momentum condition}
\[
\int v=0.
\]
By (\ref{cnsrv-mom}), this condition persists for all $t$.

The trivial solution $(\rho,v)=(0,0)$ is an equilibrium of the system (\ref{EP-system}). Our main results in this paper are the long-term stability of this equilibrium.

To linearize the system (\ref{EP-system}), we define the Fourier multipliers $|\nabla|$ and $\Lambda$.
\begin{definition}
Let $\mathcal F$ denote the Fourier transform. We define
\begin{align*}
\mathcal F(|\nabla|u)(\xi)&=|\xi|\mathcal Fu(\xi), &
\mathcal F(\Lambda u)(\xi)&=\sqrt{1+|\xi|^2}\mathcal Fu(\xi).
\end{align*}
\end{definition}

Near this equilibrium the system (\ref{EP-system}) linearizes to
\[
\begin{aligned}
\rho_t&=-\nabla\cdot v,\\
v_t&=-\nabla\rho+\nabla\phi=-\nabla\rho-\nabla|\nabla|^{-2}\rho
=-\nabla(1+|\nabla|^2)|\nabla|^{-2}\rho,
\end{aligned}
\]
which can then be written in matrix form as
\[
\frac{d}{dt}
\begin{pmatrix}
\rho\\
v
\end{pmatrix}
=
\begin{pmatrix}
0 & -\nabla\cdot\\
-\nabla(1+|\nabla|^2)|\nabla|^{-2} & 0
\end{pmatrix}
\begin{pmatrix}
\rho\\
v
\end{pmatrix}
\]
and the eigenvalues of the matrix on the right-hand side are (formally)
\[
\pm\sqrt{\nabla\cdot\nabla(1+|\nabla|^2)|\nabla|^{-2}}
=\pm i\sqrt{1+|\nabla|^2}=\pm i\Lambda.
\]

\subsection{The main theorems}
To state our main results we need to introduce some function spaces.
All functions and integrals are on the torus $(\R/R\Z)^2$ unless stated otherwise.

Let $\varphi$ be a smooth cutoff function that is 1 on $B(0,2/3)$ and vanishes outside $B(0,3/2)$. Let
\begin{align*}
\varphi_j&=\varphi(x/2^j)-\varphi(x/2^{j-1}), &
\varphi_{\le j}&=\varphi(x/2^j).
\end{align*}
Let $P_k$ be the Littlewood-Paley projection onto frequency $2^k$, so that
\begin{align*}
\mathcal F(P_ku)&=\varphi_k \mathcal Fu, &
\mathcal F(P_{\le k}u)&=\varphi_{\le k} \mathcal Fu, &
\mathcal F(P_{>k}u)&=(1-\varphi_{\le k})\mathcal Fu.
\end{align*}
For $j\ge1$ let $Q_j$ be the physical localization at scale $\approx 2^j$,
that is, multiplication by $\varphi_j$. Let $Q_0=id-\sum_{j\ge1}Q_j$.
Define $\|x\|=d(x,(R\Z)^2)$, and $k^+=\max(k,0)$. Fix $M\ge10$.
Now we can introduce the function spaces.

\begin{definition}\label{XZ-def}
Define
\begin{align*}
\|u\|_X&=\sum_{k\in\Z} 2^{Mk^+}\|P_ku\|_{L^\infty}, &
\|u\|_Z&=\|(1+\|x\|)^{2/3}\Lambda^{M+2}u\|_{L^2}.
\end{align*}
\end{definition}

One can think of the $X$ norm as $W^{M,\infty}$, and the $Z$ norm as $W^{M+2,1.2+}$.

\begin{lemma}\label{XZ-bound}
(i) For $k\in\Z$ we have
\[
\|P_ku\|_{W^{M+2,1.2+}}\lesssim \|u\|_Z,
\]
where $a+$ denotes an exponent larger than $a$ but can be arbitrarily close to $a$.

(ii)
\begin{align*}
\|u\|_X&\lesssim \sum_{k\in\Z} 2^{k+Mk^+}\|P_ku\|_{L^2}\lesssim \|u\|_{H^{M+2}}, &
\|u\|_Z&\lesssim R^{2/3}\|u\|_{H^{M+2}}.
\end{align*}

(iii) If $m\ge0$ and $T$ is a differential operator whose symbol is in the class $S^m_{1,0}$, then
\[
\|Tu\|_{W^{M-m,\infty}}\lesssim \|u\|_X.
\]

(iv) For $k\in\Z$ we have
\[
\|(1+\|x\|)^{2/3}P_ku\|_{L^2}\lesssim 2^{-(M+2)k^+}\|u\|_Z.
\]

(v) Calderon--Zygmund operators are bounded on $H^N$, $X$ and $Z$.
\end{lemma}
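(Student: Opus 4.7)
The plan is to treat the five parts as bookkeeping in Littlewood--Paley theory and Muckenhoupt-weight calculus. The unifying technical fact is that on $\R^2$ the weight $w(x)=(1+|x|)^{4/3}$ belongs to the Muckenhoupt class $A_2$: the exponent $4/3$ lies in the admissible range $(-2,2)$. Consequently Calder\'on--Zygmund operators, in particular frequency-localized multipliers, are bounded on $L^2(w)$. On the torus $(\R/R\Z)^2$ the function $\|x\|$ is bounded by $R$, but $(1+\|x\|)^{4/3}$ still satisfies a uniform $A_2$ condition on balls of radius $\lesssim R$, so the corresponding boundedness survives; this transfer from $\R^2$ to the torus is standard but is the one point that warrants explicit verification.

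For part (ii), Bernstein in two dimensions gives the first inequality, and Cauchy--Schwarz in $k$ against the summable sequence $2^{2(k+Mk^+)}(1+2^k)^{-2(M+2)}$ combined with Plancherel gives the second. The bound $\|u\|_Z\lesssim R^{2/3}\|u\|_{H^{M+2}}$ is immediate from the pointwise inequality $(1+\|x\|)^{2/3}\le(1+R)^{2/3}$. For part (iii), $\partial^\alpha T$ has symbol in $S^{|\alpha|+m}_{1,0}$, so $P_k\partial^\alpha T$ is convolution with a kernel of $L^1$-norm at most $C\,2^{(|\alpha|+m)k^+}\le C\,2^{Mk^+}$ when $|\alpha|\le M-m$; summing in $k$ recovers $\|u\|_X$. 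Part (v) splits similarly: on $H^N$ by Plancherel, on $X$ by the fact that $P_kT=TP_k$ and the kernel of $\tilde\varphi_k\,m$ is uniformly bounded in $L^1$, and on $Z$ by commuting $T$ past $\Lambda^{M+2}$ and invoking $A_2$-boundedness.

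The heart of the lemma is (iv), from which (i) follows. For (iv), write $P_ku=P_k\Lambda^{-(M+2)}\cdot\Lambda^{M+2}u$; the symbol $\varphi_k(\xi)(1+|\xi|^2)^{-(M+2)/2}$ equals $2^{-(M+2)k^+}$ times a uniformly CZ-type multiplier, so $L^2(w)$-boundedness of the latter yields (iv). Combining (iv) with H\"older, using that $\|(1+\|x\|)^{-2/3}\|_{L^q}$ is bounded uniformly in $R$ precisely when $q>3$, gives $\|P_ku\|_{L^p}$ for every $p>6/5=1.2$; this is exactly what forces the exponent $1.2+$ rather than $1.2$ in the statement. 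Commuting derivatives through $P_k$ and applying the same weighted argument delivers the full $W^{M+2,1.2+}$ bound. The principal obstacle is therefore the weighted estimate on the torus; once the $A_2$ argument is secured in the periodic setting, all five parts follow by the decompositions sketched above.
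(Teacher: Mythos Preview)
Your proposal is correct and follows essentially the same route as the paper: the paper's proof is a terse one-liner that attributes (i) to H\"older together with $(1+\|x\|)^{-2/3}\in L^{3+}$, (ii) to Bernstein, (iii) to $\|TP_ku\|_{L^\infty}\lesssim\|P_ku\|_{L^\infty}$, and (iv)--(v) to the $A_2$ property of $(1+\|x\|)^{4/3}$ (citing Stein, Section~V.4). Your write-up supplies the factorization $P_k=\bigl(P_k\Lambda^{-(M+2)}\bigr)\Lambda^{M+2}$ for (iv) and makes explicit that (i) rests on (iv), details the paper leaves implicit, but the substance is identical.
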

\begin{proof}
(i) follows from H\"older's inequality and the fact that $(1+\|x\|)^{-2/3}\in L^{3+}$. (ii) follows from the Bernstein inequality. (iii) follows from the bound $\|TP_ku\|_{L^\infty}\lesssim\|P_ku\|_{L^\infty}$.
(iv) and (v) are due to the fact that the weight $(1+\|x\|)^{4/3}\in A_2$, see Theorem 2 and Corollary in Section V.4 of \cite{St}.
\end{proof}
\begin{remark}
Throughout the paper $N$ and $M$ are fixed integers, and all implicit constants only depend on them unless stated otherwise. In particular they do not depend on $R$.
We assume $M\ge10$, $\ep$ is small enough and $R$ is large enough.
\end{remark}

Now we can state the main results about the 2D Euler--Poisson system (\ref{EP-system}) in this paper.
\begin{theorem}\label{Thm1}
There is a constant $c>0$ such that if $N\ge M+5$, $1\ll R\le \exp(c\ep^{-c})$ and
\begin{equation}\label{HN0}
\||\nabla|^{-1}\rho_0\|_{H^{N+1}}+\|v_0\|_{H^N}\le\ep,
\end{equation}
then there is
\begin{equation}\label{T1}
T_{R,\ep}\approx R/(\ep^2(\log R)^{O(1)})
\end{equation}
such that (\ref{EP-system}) with initial data $(\rho_0,v_0)$,
has a unique solution with $(\Lambda|\nabla|^{-1}\rho,v)\in C([0,T_{R,\ep}],H^N)$.
\end{theorem}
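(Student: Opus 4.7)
The plan is to combine a quadratic normal form with a two-norm bootstrap argument, in the spirit of the Shatah--Germain--Masmoudi--Pusateri--Ionescu--Pausader school of dispersive wellposedness, adapted to the compact $2$-torus. First, I would diagonalize the linear part by introducing the complex variable $u := \Lambda|\nabla|^{-1}\rho - i\,|\nabla|^{-1}\nabla\cdot v$, which is well defined on the zero-mean, irrotational subspace preserved by the flow. A direct computation using (\ref{EP-system}) gives $(\partial_t+i\Lambda)u = \mathcal Q(u,\bar u) + \mathcal C(u,\bar u,u) + \cdots$, where $\mathcal Q$ and $\mathcal C$ are bi- and tri-linear Fourier multipliers whose symbols are explicit rational functions of $\xi,\eta,\Lambda(\xi),\dots$ read off from the $\nabla\cdot(\rho v)$ and $(v\cdot\nabla)v$ nonlinearities together with the Poisson factor $|\nabla|^{-2}$. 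Local wellposedness in $H^N$ is then standard.

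Second, I would carry out a quadratic normal form. Setting $\Phi_{\iota_1\iota_2}(\xi,\eta) := \Lambda(\xi) - \iota_1\Lambda(\eta) - \iota_2\Lambda(\xi-\eta)$, with $\iota_i\in\{+,-\}$, on the dual lattice $(R^{-1}\Z)^2$, the Klein--Gordon identity $\Lambda^2=1+|\xi|^2$ yields a uniform lower bound on $|\Phi_{\iota_1\iota_2}|$ in terms of $\Lambda(\xi)\Lambda(\eta)\Lambda(\xi-\eta)$; hence the system has no true quadratic resonances. I would then define $w := u + \sum_{\iota}B_{\iota}(u^{\iota_1},u^{\iota_2})$, with $B_{\iota}$ having symbol $i^{-1}\mathcal Q_{\iota}/\Phi_{\iota}$, so that $w$ satisfies a \emph{cubic} equation $(\partial_t+i\Lambda)w = \widetilde{\mathcal C}(w,w,w)$ modulo harmless higher-order terms, while $\|w-u\|_{H^N}\lesssim \|u\|_{H^N}^2$.

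Third, bootstrap on $[0,T_{R,\ep}]$ with two norms: the high-regularity energy $\|u(t)\|_{H^N}$ and the weighted profile norm $\|f(t)\|_Z$, where $f(t):=e^{it\Lambda}w(t)$. Make the assumption $\|u(t)\|_{H^N}\le 2\ep$ and $\|f(t)\|_Z\le \ep(\log R)^{A}$ on $[0,T]$ and improve both. For the energy, the cubic identity $\frac{d}{dt}\|u\|_{H^N}^2\lesssim \|u\|_X^2\|u\|_{H^N}^2$ combines with the key torus dispersive estimate
\[
\|e^{-it\Lambda}P_k g\|_{L^\infty} \lesssim (\log R)^{O(1)}\,(1+|t|)^{-1}\,\|P_k g\|_Z \qquad (0\le t\lesssim R)
\]
and Lemma \ref{XZ-bound}(iii) to give $\|u(t)\|_X\lesssim \ep(\log R)^{O(1)}/(1+t)$. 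A careful accounting of the factor $R$ entering the dispersive estimate yields energy growth by a factor $1+O(\ep^2(\log R)^{O(1)}T/R)$, which closes precisely at $T\lesssim R/(\ep^2(\log R)^{O(1)})$, producing (\ref{T1}). The $Z$-bound on $f$ is improved by writing $\partial_t f=e^{it\Lambda}\widetilde{\mathcal C}(u,u,u)$ and performing a space-time resonance analysis of the cubic phase on the dual lattice, with integration by parts in frequency producing factors of $\|x\|$ that are absorbed by the weight in the $Z$-norm.

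The main obstacle is establishing the torus dispersive estimate above with only a polylogarithmic loss in $R$. Heuristically, a Klein--Gordon wave packet on $(\R/R\Z)^2$ disperses as on $\R^2$ up to the Ehrenfest time $t\sim R$, after which its periodic images interfere. To realize this rigorously I would decompose the propagator via Poisson summation into Euclidean propagators indexed by $(R\Z)^2$: the nonzero translates live at distance $\ge R$ and can be absorbed by the $(1+\|x\|)^{2/3}$ weight entering the $Z$-norm (for which the exponent $2/3$ is precisely the one matching the curvature of $\{\Lambda=\text{const}\}$ in two dimensions), while the central translate is handled by standard $\R^2$ stationary phase. A companion difficulty is that the cubic resonance analysis in the third step lives on the discrete frequency lattice $(R^{-1}\Z)^2$, so the stationary phase method must be applied carefully above frequency scale $R^{-1}$ and supplemented at the very lowest frequencies by the conservation laws (\ref{cnsrv-charge})--(\ref{cnsrv-mom}) together with Poincar\'e--Wirtinger on the torus.
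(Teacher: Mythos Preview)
Your proposal conflates the two theorems of the paper. Theorem~\ref{Thm1} assumes only $H^N$ smallness of the data (hypothesis~(\ref{HN0})); there is \emph{no} $Z$-norm assumption. From $H^N$ alone one only has $\|f(0)\|_Z\lesssim R^{2/3}\ep$ (Lemma~\ref{XZ-bound}(ii)), so your bootstrap assumption $\|f(t)\|_Z\le \ep(\log R)^A$ fails already at $t=0$ whenever $R$ is large --- and the theorem allows $R$ up to $\exp(c\ep^{-c})$. If instead you bootstrap at the honest level $\|f\|_Z\lesssim R^{2/3}\ep$, the pointwise decay (which with weight $\|x\|^{2/3}$ is only $t^{-2/3+}$, not $t^{-1}$; cf.\ Lemma~\ref{dispersiveZ}) yields $\int_0^R\|u\|_X^2\,ds\gtrsim R^{4/3}\ep^2$, which is not small, and the energy estimate does not close. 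Your asserted energy growth factor $1+O(\ep^2(\log R)^{O(1)}T/R)$ is in fact the output of a different mechanism, not pointwise decay from a profile norm.

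The paper's proof of Theorem~\ref{Thm1} replaces your $Z$-norm/pointwise-decay loop by a \emph{Strichartz} estimate (Lemma~\ref{dispersiveTT*}, proved by $TT^*$ and the torus dispersive bound of Lemma~\ref{dispersive}):
\[
\|e^{-is\Lambda}u\|_{L^2([0,t])X}\lesssim \mathcal L_R\sqrt{1+t/R}\,\|u\|_{H^{M+2}},
\]
which goes directly from $L^2$-type data to $L^2_tL^\infty_x$ with no weighted norm at all. After the normal form, the quartic energy identity needs exactly $\int_0^t\|u(s)\|_X^2\,ds$, i.e.\ the square of this $L^2_tX$ quantity, and one closes the two-norm bootstrap with $(\|U\|_{L^\infty_tH^N},\ \|U\|_{L^2_tX})$ rather than $(\|U\|_{H^N},\|f\|_Z)$; see Proposition~\ref{bootstrap1} and Section~\ref{StrEst}. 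The $Z$-norm scheme you describe is what the paper uses for Theorem~\ref{Thm2}, where $\|V(0)\|_Z\le\ep$ \emph{is} assumed, and it yields the different lifespan~(\ref{T2}).
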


\begin{theorem}\label{Thm2}
If $N\ge\max(3(M+4),106)$, $R\gg1$ and
\begin{equation}\label{Z0}
\||\nabla|^{-1}\rho_0\|_{H^{N+1}}+\|v_0\|_{H^N}+\|\Lambda|\nabla|^{-1}\rho_0\|_Z+\|v_0\|_Z\le\ep,
\end{equation}
then there is
\begin{equation}\label{T2}
T_{R,\ep}'\approx R^{10/9-O(1/N)}\ep^{-2/3+O(1/N)}
\end{equation}
such that (\ref{EP-system}) with initial data $(\rho_0,v_0)$,
has a unique solution with $(\Lambda|\nabla|^{-1}\rho,v)\in C([0,T_{R,\ep}],H^N)$.
\end{theorem}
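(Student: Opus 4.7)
The plan is to run a bootstrap in the combined Sobolev--dispersive norms $H^N$ and $Z$. Exploiting the irrotationality of $v = \nabla\psi$, I first diagonalize the linearization by introducing the complex unknown $u = \Lambda|\nabla|^{-1}\rho + i|\nabla|^{-1}\nabla\cdot v$, which satisfies a scalar equation $(\pr_t - i\Lambda)u = \mathcal{Q}(u,u) + \mathcal{C}(u,u,u) + \cdots$ with $\mathcal{Q}$ quasilinear--quadratic and $\mathcal{C}$ cubic. Let $f(t) = e^{-it\Lambda}u(t)$ be the associated profile. The bootstrap hypothesis is
\begin{equation*}
\|u(t)\|_{H^N} \le \ep_1, \qquad \|f(t)\|_Z \le \ep_1 (1+t)^\delta \qquad (t \in [0,T^*]),
\end{equation*}
with $\ep_1 = C\ep$ and a small $\delta = O(1/N)$; the goal is to improve both constants by a factor of two on $[0, T'_{R,\ep}]$. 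Lemma \ref{XZ-bound} is what ties the two norms together, by converting a $Z$-norm bound into a pointwise $X$-norm bound via dispersive decay.

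The $H^N$ estimate reduces, after Alinhac-style symmetrization of the quasilinear quadratic terms, to
\begin{equation*}
\tfrac{d}{dt}\|u\|_{H^N}^2 \les \|u\|_X \|u\|_{H^N}^2 .
\end{equation*}
The key input is a decay bound of the form $\|u\|_X \les \ep_1(1+t)^{\delta-1}(\log R)^{O(1)}$ for $t \les R$, with a slower (but still effective) decay for $t \gtrsim R$, derived from the $Z$-norm via stationary-phase analysis of the kernel of $e^{it\Lambda}$ on the torus (Euclidean-like up to time $\sim R$, and Gauss-sum type oscillatory behavior afterwards). Combined with Lemma \ref{XZ-bound}(iii), Gronwall then keeps $\|u\|_{H^N}\le\ep_1/2$ up to the target time, provided the $Z$-norm grows no faster than $(1+t)^\delta$.

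Propagating the $Z$-norm is the main obstacle. Applying the weight $(1+\|x\|)^{2/3}\Lambda^{M+2}$ to Duhamel's formula reduces the task to the estimate
\begin{equation*}
\Bigl\|(1+\|x\|)^{2/3}\Lambda^{M+2}\int_0^t e^{-is\Lambda}[\mathcal{Q}(u,u) + \mathcal{C}(u,u,u)](s)\,ds\Bigr\|_{L^2} \les \ep_1(1+t)^\delta .
\end{equation*}
For the quadratic contribution I would perform a time normal form: write $\mathcal{Q}$ in Fourier variables with phase $\Phi_{\pm\pm}(\xi,\eta) = \Lambda(\xi) \mp \Lambda(\eta) \mp \Lambda(\xi-\eta)$ and integrate by parts in $s$ using $\pr_s e^{is\Phi} = i\Phi e^{is\Phi}$. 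The boundary term is cubic and is controlled using two $X$-factors and one weighted-$L^2$ factor via Lemma \ref{XZ-bound}(i)(iv). Division by $\Phi$ is dangerous near the time-resonant set $\{\Phi = 0\}$, which on the torus is a discrete subset of $((2\pi/R)\Z^2)^2$; the near-resonant contributions require lattice-point counting in the thin shells $\{|\Phi|\le 2^{-k}\}$, combined with the Calder\'on--Zygmund boundedness from Lemma \ref{XZ-bound}(v) to absorb the weight. Balancing these resonant contributions against the cubic remainders is where the sharp exponents $R^{10/9 - O(1/N)}\ep^{-2/3 + O(1/N)}$ of $T'_{R,\ep}$ arise, closing the bootstrap.
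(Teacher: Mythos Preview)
Your proposal has a genuine gap in the energy estimate, and a misconception about the $Z$-norm mechanism.

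\textbf{The energy estimate.} You claim that after symmetrization the energy obeys the \emph{cubic} inequality $\frac{d}{dt}\|u\|_{H^N}^2\lesssim\|u\|_X\|u\|_{H^N}^2$, and that the bootstrap closes because $\|u\|_X\lesssim\ep_1(1+t)^{\delta-1}$. The second claim is false for the $Z$-norm defined in the paper: the weight is $(1+\|x\|)^{2/3}$, and the linear dispersive estimate it yields on the torus (Lemma~\ref{dispersiveZ}) is only
\[
\|e^{-it\Lambda}u\|_X\lesssim(1+t)^{-2/3+}(t/R+1)^{4/3}\|u\|_Z,
\]
not $(1+t)^{-1+}$. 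With $t^{-2/3}$ decay, $\int_0^t\|u\|_X\,ds$ grows like $\ep_1 t^{1/3}$, and your cubic estimate cannot be closed for the claimed lifespan. The paper's remedy is a \emph{quartic} energy estimate (Section~\ref{EneEst}): one performs a normal-form transformation on the energy itself, not just on Duhamel, so that the cubic energy densities $\mathcal E_Q,\mathcal E_S$ are integrated by parts in time and replaced by boundary terms plus quartic bulk integrals controlled by $\int_0^t\|U\|_X^2\,ds$. With $t^{-4/3+}$ integrand this integral is bounded for $t\lesssim R$, which is exactly what gives room for the $x^{2/3}$ weight (see Remark~\ref{Alx0}(iii)). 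Your Alinhac-style symmetrization alone stops one order short.

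\textbf{Time resonances.} You describe division by $\Phi_{\mu\nu}$ as ``dangerous near the time-resonant set'' and propose lattice-point counting in shells $\{|\Phi|\le2^{-k}\}$. For the Klein--Gordon dispersion relation there is \emph{no} time resonance: $|\Phi_{\mu\nu}|\gtrsim(1+\min(|\xi_1|,|\xi_2|,|\xi_1+\xi_2|))^{-1}$ uniformly (Lemma~\ref{1/F-Soo}), so $\Phi^{-1}$ is an honest $S^\infty$ multiplier and no small-divisor analysis is needed. The lattice structure plays no role here.

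\textbf{$Z$-norm propagation.} The mechanism in the paper is not resonance counting but a physical-space decomposition: after the normal form one bounds the boundary term $W_{\mu\nu}$ and the cubic bulk $H_{\mu\nu\rho}$ in $Z$ by combining Littlewood--Paley localization with spatial cutoffs $Q_j$, using the $L^1\!\to\! L^\infty$ dispersive bound when $j\lesssim\mathcal L$ and non-stationary-phase (Lemma~\ref{int-part-cor}) when $j\gg\mathcal L$. Balancing these cases is where the exponent $R^{(10/3-2/N)/(3+33/N)}\ep^{-2/(3+33/N)}$ in Proposition~\ref{bootstrap2} arises; it is not a small-divisor balance.
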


\begin{remark}\label{Alx0}
(i) For specific values of the constants in exponents see Propositions \ref{bootstrap1} and \ref{bootstrap2}. Technically the proof gives a constant $c$ depending on the choice of $N$ and $M$ in Theorems \ref{Thm1} and \ref{Thm2}, but this dependence can be removed using persistence of regularity (see \cite{Ta} for details.)

(ii) If the assumption made on $R$ and $\ep$ in Theorem \ref{Thm1} does not hold, then we have $R\gtrsim\ep^{-100}$ (say), and Theorem \ref{Thm2} gives a better bound. Either lifespan is longer than $R/\ep$, which is the most one can hope for without using the normal form, for no decay can be expected of the $L^2$ norm of the solution, giving a lower bound of $\gtrsim\ep/R$ of its $L^\infty$ norm.

(iii) Theorem \ref{Thm2} implies global regularity in the Euclidean case, and gives a quantitative version of the theorem of Ionescu--Pausader \cite{IoPa1} and Li--Wu \cite{LiWu}. A notable difference from those two works is that here the spatial weights on the Sobolev norms are equivalent to $x^{2/3}$ in the Euclidean case, which is smaller than $x$ in \cite{IoPa1,LiWu}. This is due to quartic energy estimates which allow for more flexibility in choosing the spatial weights. The choice of $x^{2/3}$ as spatial weights will be explained later when the $Z$-norm estimates are discussed.

(iv) The Klein-Gordon equation with mass $m=1$ on a torus of size $R$ can be rescaled to the unit torus, but with mass $m=R$. When $R$ is of unit size, Theorem \ref{Thm1} gives a lifespan $\gtrsim\ep^{-2}$ and recovers the theorem of Delort--Szeftel \cite{DeSz}.
In general, our bounds depend on $R$ in a uniform way, thus reinstating the exceptional set of measure zero that has to be excluded from the parameter space in Delort \cite{De1} and Fang--Zhang \cite{FaZh}.

(v) Unlike Faou-Germain-Hani \cite{FaGeHa} and Buckmaster--Germain--Hani--Shatah \cite{BuGeHaSh}, our proofs of Theorems \ref{Thm1} and \ref{Thm2} do not rely on the number-theoretic properties of the resonance set. Hence it is straightforward to generalize our results to nonsquare tori with bounded aspect ratios.
\end{remark}

\subsection{Previous work on long-term regularity}
The Euler--Poisson system (\ref{EP-system}) is a symmetrizable quasilinear hyperbolic system, as already shown in \cite{IoPa1}. Therefore local regularity of solutions with sufficiently smooth initial data follows from \cite{Ka}. It is long-term regularity that is of interest here.

In the Euclidean case, global regularity in 3D was shown by Guo in \cite{Guo}, and in 2D shown independently by Ionescu--Pausader \cite{IoPa1} and Li--Wu \cite{LiWu}. Extensions of this model whose global regularity is known include the nonneutral case \cite{GeMaPa}, the Euler--Maxwell equation (\cite{GeMa,IoPa2} in 3D and \cite{DeIoPa} in 2D), the Euler--Poisson ion equation \cite{GuPa}, two fluid models (\cite{GuIoPa} for nonrelativistic models and \cite{GuIoPa2} for relativistic models), general Klein-Gordon systems (\cite{DeFaXu,Ge,IoPa2} for generic parameters and \cite{Deng} for all parameters) and coupled wave-Klein-Gordon systems \cite{IoPa3}.

\subsubsection{Periodic solutions}
Because of the lack of dispersion on compact domains,
global regularity is hard to come by in the periodic case.
The study of periodic dispersive equations was initiated by Bourgain,
who showed almost global regularity of the quadratic Klein-Gordon equation for almost every mass on the circle \cite{Bo2}. Using the {\bf normal form} method, this result has since been generalized to semilinear \cite{De1,FaZh} and quasilinear \cite{De2,DeSz} Klein-Gordon equations on tori, spheres \cite{De3} and Zoll manifolds \cite{DeSz2}.

It should also be mentioned that when the size of the torus is large (known as the {\bf large box limit}), Faou--Germain--Hani \cite{FaGeHa}, and more recently Buckmaster--Germain--Hani--Shatah \cite{BuGeHaSh}, were able to derive a continuous resonance equation that describes the long-term behavior of the solution of a cubic Schr\"{o}dinger equation on the torus of any dimension.

\subsection{Main ideas of the proof}\label{MainIdea}

Since the seminal work of Klainerman \cite{Kl2}--\cite{Kl4}, Christodoulou \cite{Chr}, and Shatah \cite{Sh}, the proof of long-term regularity of such systems consists of the following two aspects:

\setlength{\leftmargini}{1.8em}
\begin{itemize}
  \item[(1)] Energy estimates (high order Sobolev norms) to control high frequencies;
\smallskip
  \item[(2)] Dispersive estimates of the $L^\infty$ norm of the solution to control low frequencies.
\end{itemize}

Starting from Shatah \cite{Sh}, Poincar\'e's {\it normal form} method (see \cite{Ar,Ch} for book reference) has proved to be successful in the study of long-term solutions of nonlinear evolutions. Basically one transforms quadratic nonlinearities to {\it cubic} ones to gain better integrability of the decay of the solution. To adapt this general framework to the case of the torus, one needs to overcome the difficulty that the $1/t$ decay of the linear evolution of the Euclidean Klein-Gordon equation is only valid for time $t\lesssim R$. Beyond this time the solution wraps around and superimposes with itself. As a rough estimate, notice that the group velocity of the Klein-Gordon wave is $\nabla\Lambda(\xi)=O(1)$, so after time $t\gtrsim R$ the solution is able to wrap around the torus $O(t/R)$ times, both horizontally and vertically. Thus the Euclidean theory only gives a bound of the form
\begin{equation}\label{u-Loo}
\|u\|_{L^\infty}\lesssim t^{-1}(t/R)^2=t/R^2
\end{equation}
on the torus of size $R$. Suppose the nonlinearity has degree $D$. Using Gronwall's inequality one arrives at an energy estimate schematically of the form
\begin{equation}\label{E-estimate}
E(t)\le E(0)\exp\left( C\int_0^t \|u(s)\|_{L^\infty}^{D-1} \right)ds
\le E(0)\exp\left( C\frac{t^D\ep^{D-1}}{R^{2D-2}} \right).
\end{equation}
Hence one is only able to close the estimate up to time
\begin{equation}\label{crude-t-bound}
T\lesssim R^{\frac{2D-2}{D}}\ep^{-\frac{D-1}{D}}.
\end{equation}
For quadratic nonlinearity this gives a lifespan of $R/\sqrt\ep$. This is nontrivial only in the large box limit $R\to\infty$; when $R\approx1$, this lifespan is even shorter than provided by local regularity.

To improve on (\ref{crude-t-bound}), we will combine the following three ingredients;

\begin{itemize}
\item {\bf Quartic energy estimates}. It has been observed in \cite{Sh,IoPa1,LiWu} that the Klein-Gordon equation has no time resonance, and the normal form transform effectively makes the nonlinearity {\it cubic}, and allows for {\it quartic} energy estimates. To overcome the loss of derivatives arising from quasilinearity, we make use of paradifferential calculus, which has already found application to similar quasilinear evolution equations in, for example, \cite{DeIoPa}.

\item {\bf $Z$-norm estimates} in the large box limit. When $R$ is very large compared to $1/\ep$, the dispersive estimates are done using a bootstrap argument in a suitable $Z$-norm (spatially weighted Sobolev norm) of the profile. The argument is similar to the Euclidean case in \cite{IoPa1}. Thanks to the normal form transform, the nonlinearity is now cubic, so we only need a decay better than $1/\sqrt t$. We will still optimize the Euclidean decay rate, which translates to longer lifespan in the large box case. This is done by adjusting the spatial weight.

More precisely, in the Euclidean case, a spatial weight of $x^\alpha$,
where $\alpha\in(0,1)$, leads to $t^{-\alpha}$ decay of the $L^\infty$ norm of the solution, and no decay of the $L^2$ norm.
Localized initial data will spread a distance of $\approx t$ after time $t$. At such distances, the $Z$ norm of the cubic nonlinearity is dominated by two scenarios: one where all three factors are at distance $O(1)$ from the origin, and the other where all three factors are at distance $\approx t$. Using the $L^2\times L^\infty\times L^\infty\to L^2$ trilinear estimate, the first part decays like $t^{-2}$, while the second one decays like $t^{-3\alpha}$ (note that we only need the {\it unweighted} $L^2$ norm of the first factor, which contributes another factor $t^{-\alpha}$ in decay.) Putting the weight back onto the nonlinearity gives a decay rate of $\max(t^{-2+\alpha},t^{-2\alpha})$, which attains the minimum of $t^{-4/3}$ when $\alpha=2/3$. This is well integrable, and translates to the fact that the exponent of $R$ in the lifespan is larger than 1 in the large box case.

\item {\bf Strichartz estimates} in the small box regime. When $R$ is close to 1, the decay (\ref{u-Loo}) is no longer useful; in fact we don't expect any decay of the $L^\infty$ norm of the linear evolution. Trivial integration will produce a factor of $t$ in the energy estimate. This factor can be saved using Strichartz estimates, which in $\R^2$ reads (see \cite{Ta} for a textbook reference)
\begin{equation}\label{Strichartz}
\|e^{it\Delta}u\|_{L^q_tL^r_x}\lesssim \|u\|_{L^2_x},\quad 1/q+1/r=1/2,\ (q,r)\neq(2,\infty).
\end{equation}
We will show its analog for the Klein-Gordon equation on $\mathbb T^2$ at the endpoint $(q,r)=(2,\infty)$. This fits nicely into the energy estimate (\ref{E-estimate}) thanks again to the cubic nonlinearity (so $D-1=2$). The logarithmic loss in time in the endpoint case of (\ref{Strichartz}) is reflected in (\ref{T1}), and the loss of derivative in quasilinearity is recovered by the energy estimates.
%It should be mentioned that Bourgain-Demeter (\cite{BoDe}, Theorem 2.4) showed the $\ell^2$ decoupling conjecture that implies a variant of Strichartz estimates (\ref{Strichartz}, with $q=r$) for the linear Schr\"{o}dinger equation on the torus.
\end{itemize}

\subsection{Organization}
The rest of the paper is organized as follows: In section \ref{Local} we establish local wellposedness of the Euler-Poisson system and state the main bootstrap propositions. In section \ref{LinEst} we introduce paradifferential calculus and derive the linear dispersive estimates and multilinear paraproduct estimates to be used in the rest of the paper. In section \ref{EneEst} we obtain the quartic energy estimates. In section \ref{StrEst} we prove Theorem \ref{Thm1}, and in section \ref{ZEst} we prove Theorem \ref{Thm2}.

\subsection{Acknowledgements}
The author wants to thank his advisor Alexandru Ionescu for his constant help and unfailing encouragement throughout the completion of this work. He also wishes to thank Xuecheng Wang, Yu Deng and Ziquan Zhuang for helpful discussions, and Qingtang Su for numerous illuminating questions that improved the exposition greatly.

\section{Local wellposedness and bootstrap propositions}\label{Local}
By the irrotationality and zero momentum conditions, we have $v=\nabla h$ for some $h$.
Let $g=|\nabla|^{-1}\rho$, and
\[
U=\Lambda g+i|\nabla|h=X+iY.
\]
The charge-neutrality and zero momentum conditions imply that $\mathcal FU(0)=0$,
and the evolution equations for $X$ and $Y$ are
\begin{equation}\label{XtYt}
\begin{aligned}
X_t&=\Lambda Y-\Lambda R_j(\rho v_j), &
Y_t&=-\Lambda X-|\nabla|(v^2/2),
\end{aligned}
\end{equation}
where $\rho=\Lambda^{-1}|\nabla|X$, $v_j=R_jY$, and $R_j=|\nabla|^{-1}\partial_j$ is the Riesz transform.
Note that the action of the Riesz transform on the zero frequency is assumed to be 0, consistent with the charge-neutral condition.
Repeated indices imply Einsteinian summation throughout the paper.

\subsection{Local Wellposedness}
The local wellposedness of the Euler-Poisson system (\ref{EP-system}) on $\R^2$ was worked out in Proposition 2.2 (i) of \cite{IoPa1} using Bona-Smith approximation.
The key point in the proof is an energy identity making use of the symmetrizability of the Euler-Poisson system, which is manifest after multiplying the second equation by $n$.
Since the proof of the energy identity uses nothing more than integration by parts,
it continues to hold on the torus, which then shows local wellposedness of the Euler-Poisson system on the torus.

\begin{proposition}\label{local-existence}
If $N\ge3$, $U_0\in H^N$ and $\|U_0\|_{H^3}$ is sufficiently small,
then there is $U\in C([0,1],H^N)\cap C^1([0,1],H^{N-1})$ solving (\ref{EP-system}) with initial data $U_0$.
\end{proposition}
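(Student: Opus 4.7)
The plan is to adapt the Euclidean proof in Proposition 2.2(i) of \cite{IoPa1} to the torus by Bona--Smith approximation, which (as the author indicates) uses only integration by parts plus standard harmonic analysis, all of which transfers from $\R^2$ to $\TT^2$. First, I would symmetrize the system: multiplying the momentum equation in (\ref{EP-system}) by $1+\rho$ pairs it with the continuity equation so that the top-order derivatives of $\rho$ and $v$ in $\partial^\alpha$ of the two equations cancel after integration by parts. This produces the closed a priori estimate
\[
\frac{d}{dt}E_N \le C\,\|(\nabla\rho,\nabla v)\|_{L^\infty}\,E_N,
\]
where $E_N$ is a symmetrized energy equivalent to $\|U\|_{H^N}^2$ as long as $\|\rho\|_{L^\infty}\ll1$. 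The commutator (Kato--Ponce) bounds used to control the lower-order pieces hold on $\TT^2$ by Plancherel and Littlewood--Paley theory just as on $\R^2$, and the Riesz transforms $R_j$ are bounded on $H^N(\TT^2)$ (with the zero-frequency convention fixed above). For $N\ge3$, Sobolev embedding gives $\|(\nabla\rho,\nabla v)\|_{L^\infty}\lesssim E_N^{1/2}$, and smallness of $\|U_0\|_{H^3}$ closes the bootstrap on an interval of length $\gtrsim1$.

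Next, I would regularize: set $U_0^\delta=P_{\le1/\delta}U_0$, which belongs to every $H^s$. For each $\delta>0$ a Picard iteration in a high Sobolev space produces a smooth solution $U^\delta$ on a short interval, and the a priori estimate above extends it to $[0,1]$ with a uniform bound in $C([0,1],H^N)$. To pass to the limit I would carry out the analogous energy identity at one derivative lower for the difference $U^\delta-U^{\delta'}$, which solves a linear equation whose coefficients are controlled by $\|U^\delta\|_{H^N}$, yielding
\[
\|U^\delta-U^{\delta'}\|_{C([0,1],H^{N-1})} \le C\,\|U_0^\delta-U_0^{\delta'}\|_{H^{N-1}} \to 0.
\]
Combined with the uniform $H^N$ bound, the standard Bona--Smith argument upgrades this Cauchy convergence to the full $H^N$ topology, producing $U\in C([0,1],H^N)$. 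The evolution equation (\ref{XtYt}) then places $U_t$ in $C([0,1],H^{N-1})$, and uniqueness follows from the same energy estimate applied to the difference of two solutions with identical data.

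The main obstacle is the apparent loss of a derivative in the quasilinear pieces $(v\cdot\nabla)v$ and $\nabla\cdot(\rho v)$, and this is precisely what the symmetrization resolves: after multiplying by $1+\rho$, the would-be derivative-losing term becomes an exact divergence and integrates away harmlessly. Because this manipulation uses nothing beyond integration by parts, and because all the other ingredients (Littlewood--Paley calculus, Kato--Ponce, boundedness of Riesz transforms, Sobolev embedding) hold on $\TT^2$ as on $\R^2$, the Euclidean proof of \cite{IoPa1} transfers essentially verbatim.
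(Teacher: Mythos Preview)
Your proposal is correct and follows exactly the approach the paper takes: the paper simply cites Proposition 2.2(i) of \cite{IoPa1}, notes that the key energy identity comes from symmetrizing (multiplying the momentum equation by $n=1+\rho$), and observes that since the proof uses only integration by parts it transfers verbatim to the torus. You have in fact supplied more detail than the paper does.
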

It follows from Proposition \ref{local-existence} and Lemma \ref{XZ-bound} (ii) that
\begin{proposition}\label{XZ-fin-cont}
If $N\ge M+2$, $T>0$ and $\sup_{t\in[0,T]}\|U(t)\|_{H^N}$ is sufficiently small,
then $\|U(t)\|_X$ and $\|V(t)\|_Z$ are finite and continuous on $[0,T]$.
\end{proposition}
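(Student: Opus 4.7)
The plan is to reduce the statement to the continuity of $U$ in $H^N$ supplied by Proposition~\ref{local-existence}, and then use the Sobolev embeddings of Lemma~\ref{XZ-bound} (ii) to upgrade that continuity to the $X$ and $Z$ norms.

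First I would iterate Proposition~\ref{local-existence} across $[0,T]$. Since the hypothesis $\sup_{t\in[0,T]}\|U(t)\|_{H^N}\le\varepsilon_0$ (for some small $\varepsilon_0$) keeps $\|U(t)\|_{H^3}$ within the local-wellposedness regime at every $t$, the length of the existence interval supplied by local wellposedness does not shrink to zero, and $[0,T]$ is covered by finitely many such intervals. This yields $U\in C([0,T],H^N)$. Since the profile $V(t)=e^{-it\Lambda}U(t)$ is obtained by applying the $H^N$-isometry $e^{-it\Lambda}$, the strong continuity of this one-parameter unitary group combined with continuity of $U$ gives $V\in C([0,T],H^N)$ as well.

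Next I would invoke Lemma~\ref{XZ-bound} (ii), which, since $N\ge M+2$, yields
\[
\|U(t)\|_X \lesssim \|U(t)\|_{H^{M+2}} \le \|U(t)\|_{H^N}, \qquad \|V(t)\|_Z \lesssim R^{2/3}\|V(t)\|_{H^N},
\]
so both quantities are pointwise finite on $[0,T]$. Continuity then follows by applying the very same embeddings to the differences:
\[
\|U(t)-U(s)\|_X \lesssim \|U(t)-U(s)\|_{H^N}, \qquad \|V(t)-V(s)\|_Z \lesssim R^{2/3}\|V(t)-V(s)\|_{H^N},
\]
and the right-hand sides tend to zero as $s\to t$ by the continuity established in the previous step.

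There is essentially no obstacle here: the statement is a direct packaging of the $H^N$-local-wellposedness with the embeddings recorded in Lemma~\ref{XZ-bound}. The only mildly subtle point is the $H^N$-continuity of $V$, which is entirely standard once one knows $e^{-it\Lambda}$ is a strongly continuous unitary group on $H^N$. This proposition merely sets up the a priori framework needed to open the bootstrap arguments used in Sections~\ref{StrEst} and~\ref{ZEst}.
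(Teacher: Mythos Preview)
Your proposal is correct and matches the paper's own approach exactly: the paper simply states that the proposition follows from Proposition~\ref{local-existence} and Lemma~\ref{XZ-bound}~(ii), and you have spelled out precisely those two ingredients together with the (standard) strong continuity of $e^{it\Lambda}$ needed for the profile. The only cosmetic slip is the sign in $V(t)=e^{-it\Lambda}U(t)$ (the paper uses $e^{it\Lambda}$), which is immaterial to the argument.
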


\subsection{Bootstrap propositions}
In this subsection we lay out the bootstrap propositions and use them to show Theorem \ref{Thm1} and Theorem \ref{Thm2}.
Thoughout the paper we assume $R\gg 1$ and put
\begin{align*}
\mathcal L&=\log(t+1), & \mathcal L_R&=\log R.
\end{align*}
We will use $\mathcal L$ in Section \ref{ZEst} and $\mathcal L_R$ in Section \ref{LinEst} and Section \ref{StrEst}.

\begin{proposition}\label{bootstrap1}
Fix $N\ge M+5\ge 15$. Assume (\ref{HN0}) holds with $\ep$ small enough. Also assume
\begin{equation}\label{growthX1}
\begin{aligned}
\|U\|_{L^\infty([0,t])H^N}&\le\ep_1,\\ \|U\|_{L^2([0,t])X}&\le\ep_2,
\end{aligned}
\end{equation}
with $\ep_1$, $\ep_2$ small enough. Then
\begin{align}
\label{growth-Em-X}
\|U\|_{L^\infty([0,t])H^N}&\lesssim \ep+\ep_1^{3/2}+\ep_1\ep_2,\\
\label{growthX2}
\|U\|_{L^2([0,t])X}&\lesssim \mathcal L_R\sqrt{1+t/R}\cdot\ep_1(1+\mathcal L_R^{3/2}\ep_2^2).
\end{align}
\end{proposition}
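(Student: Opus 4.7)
The plan is to prove the two inequalities in parallel, both via the normal-form philosophy. Writing the system as $(\partial_t + i\Lambda) U = Q(U,U)$ with $Q$ a quadratic Fourier multiplier coming from (\ref{XtYt}), the fact that the dispersion relation $\Lambda(\xi) = \sqrt{1+|\xi|^2}$ has no quadratic time resonance (as exploited in \cite{Sh,IoPa1}) allows one to replace $Q(U,U)$ by an effectively cubic nonlinearity, at the cost of a symbol-type boundary correction; paradifferential calculus is needed to handle the derivative loss from the quasilinear structure, much as in the proof of Proposition \ref{local-existence}.

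For the energy bound (\ref{growth-Em-X}), I would define a modified energy
\[
E_N(t) = \|U(t)\|_{H^N}^2 + C_N(U(t)),
\]
where $C_N$ is the cubic correction chosen so that the quadratic contributions to $\frac{d}{dt}E_N$ cancel (this is the standard normal-form construction, which requires dividing by $\Lambda(\xi)+\Lambda(\eta)\pm\Lambda(\xi-\eta)\gtrsim 1$). One verifies $|C_N(U)|\lesssim \|U\|_X\|U\|_{H^N}^2$ using Lemma \ref{XZ-bound}(iii)--(v), and the remaining (quartic in $U$) time derivative satisfies
\[
\Big|\frac{d}{dt} E_N\Big| \lesssim \|U\|_X^2 \|U\|_{H^N}^2.
\]
Integrating over $[0,t]$, using the bootstrap (\ref{growthX1}), and bounding $C_N$ at the endpoints via $\|U\|_X\lesssim \|U\|_{H^{M+2}}\lesssim\|U\|_{H^N}$ (Lemma \ref{XZ-bound}(ii)) by $\ep^3$ and $\ep_1^3$ respectively, yields
\[
\|U(t)\|_{H^N}^2 \lesssim \ep^2 + \ep_1^3 + \ep_2^2\ep_1^2,
\]
and the square root gives (\ref{growth-Em-X}).

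For (\ref{growthX2}), the key technical ingredient is an endpoint Strichartz estimate for the linear Klein--Gordon flow on $\TT^2_R$ over a time interval of length $R$: for frequency-localized data,
\[
\|e^{-is\Lambda} P_k f\|_{L^2([0,R])L^\infty_x} \lesssim \mathcal L_R\,2^{Mk^+}\|P_k f\|_{L^2},
\]
i.e.\ the torus analog of (\ref{Strichartz}) at $(q,r)=(2,\infty)$, with the unavoidable $\log R$ endpoint loss. Summing in $k$ gives $\|e^{-is\Lambda} f\|_{L^2([0,R])X}\lesssim \mathcal L_R\|f\|_{H^{M+2}}$, and subdividing $[0,t]$ into $\lceil t/R\rceil$ intervals of length $R$ together with $\ell^2$ orthogonality in time produces the $\mathcal L_R\sqrt{1+t/R}$ prefactor. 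I would then work with the normal-form variable $\widetilde U = U + B(U,U)$ satisfying $(\partial_t+i\Lambda)\widetilde U = \mathcal C(U,U,U)$; Duhamel plus the above Strichartz gives
\[
\|\widetilde U\|_{L^2([0,t])X} \lesssim \mathcal L_R\sqrt{1+t/R}\Big(\|\widetilde U_0\|_{H^{M+2}} + \|\mathcal C(U)\|_{L^1_t H^{M+2}}\Big),
\]
and a trilinear $L^2\cdot L^\infty\cdot L^\infty$ paraproduct bound controls $\|\mathcal C(U)\|_{L^1_t H^{M+2}}\lesssim \mathcal L_R^{3/2}\|U\|_{L^\infty H^N}\|U\|_{L^2 X}^2\lesssim \mathcal L_R^{3/2}\ep_1\ep_2^2$, the extra $\mathcal L_R^{3/2}$ arising from a Littlewood--Paley summation at the $L^\infty$ endpoint. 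The boundary term $\|B(U,U)\|_{L^2 X}\lesssim \|U\|_{L^\infty H^{M+2}}\|U\|_{L^2 X}\lesssim \ep_1\ep_2$ is absorbed into the main linear piece, producing (\ref{growthX2}).

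The main obstacle is the endpoint Strichartz with sharp $R$-dependence: the Keel--Tao argument on $\R^2$ does not transfer directly, and I expect to prove it through a dispersive bound for $e^{-is\Lambda}P_k$ on the universal cover $\R^2$ combined with a $TT^*$ decomposition, with the $\log R$ loss appearing when summing over the $O(s/R)$ periodic copies that the solution has wrapped around by time $s\in[0,R]$. A secondary difficulty is making the paradifferential normal form compatible with both bootstraps simultaneously, so that the same $\ep_2$ from (\ref{growthX1}) controls the quartic term in the energy estimate and the cubic Duhamel term in the Strichartz estimate, without any mismatch in the required number of derivatives (hence the choice $N\ge M+5$).
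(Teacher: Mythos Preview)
Your proposal is correct and follows essentially the same strategy as the paper: quartic energy estimates via the normal form (no quadratic time resonance for $\Lambda$), and an endpoint $L^2_tL^\infty_x$ Strichartz estimate on $\mathbb T^2_R$ proved by $TT^*$ from the dispersive bound, then fed into Duhamel applied to the cubic-in-$U$ remainder.

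Two implementation points where the paper differs slightly from your sketch are worth noting. First, rather than adding a cubic correction $C_N$ directly to $\|U\|_{H^N}^2$, the paper first paralinearizes by setting $\mathcal U=X+iT_{\sqrt{1+\rho}}Y$ and works with $\mathcal E=\|P_{\ge0}\mathcal U\|_{H^N}^2$; the self-adjointness of $T_{\sqrt{1+\rho}\Lambda(\zeta)+v\cdot\zeta}$ then kills the top-order quasilinear term cleanly, and only afterwards are the surviving cubic pieces $\mathcal E_Q,\mathcal E_S$ integrated by parts in time (the resulting boundary trilinears $I_Q,I_S$ play the role of your $C_N$). This ordering matters because when the time derivative subsequently lands on $\mathcal U$ inside $I_Q$, one still faces a one-derivative loss; the paper closes it either by a commutator cancellation between the two slots of $I_Q$ or, in the $\nu=-$ case, by the extra $2^{-k_3}$ decay of $\Phi_{\mu-}^{-1}$. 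Your outline flags the quasilinear issue but does not isolate this step. Second, your attribution of the $\mathcal L_R$ loss to ``summing periodic copies'' is off: for $s\in[0,R]$ there are $O(1)$ copies, and the logarithm actually comes from $\|(1+|\cdot|)^{-1}\|_{L^1([0,R])}$ in the $TT^*$ Young inequality (the frequency-localized bound is $\sqrt{\mathcal L_R}$, with a second $\sqrt{\mathcal L_R}$ from the Cauchy--Schwarz sum over $k\ge-\mathcal L_R$). Neither point invalidates your plan; the final estimates are as you state.
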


\begin{proof}[Proof of Theorem \ref{Thm1}]
We can choose $\ep_1\approx\ep$, $T_0\approx R/(\ep^2(\log R)^{7/2})$
and $\ep_2\approx(\log R)^{-3/4}$ such that for $t\le T_0$,
(\ref{growth-Em-X}) and (\ref{growthX2}) give (\ref{growthX1}) with the strict inequality.
Note that we need $R\le\exp(c\ep^{-4/7})$ to recover the $L^2X$ norm.

Let
\[
T^*=\sup\{t\le T_0:\exists\text{ a solution }U\in C([0,t],H^N)\text{ such that }(\ref{growthX1})\text{ holds}\}.
\]
Then (\ref{growthX1}) holds for $T<T^*$. By Proposition \ref{local-existence},
the solution can be extended to $C([0,T^*+1/2],H^N)$.
If $T^*<T_0$, then by Proposition \ref{XZ-fin-cont},
(\ref{growthX1}) holds beyond time $T^*$, contradicting the definition of $T^*$.
Hence $T^*\ge T_0$, and the solution exists up to time $T_0$.
\end{proof}

\begin{proposition}\label{bootstrap2}
Fix $N\ge\max(3(M+4),106)$. Assume (\ref{Z0}) holds with $\ep$ small enough. Define the profile $V(t)=e^{it\Lambda}U(t)$. Assume
\begin{equation}\label{growthZ}
\begin{aligned}
\|U\|_{L^\infty([0,t])H^N}&\le\ep_1,\\
\sup_{[0,t]} \|V\|_Z&\le\ep_1,
\end{aligned}
\end{equation}
with $\ep_1$ small enough. Then
\begin{align}
\label{growth-Em-Z}
\|U\|_{L^\infty([0,t])H^N}
&\lesssim \ep+\ep_1^{3/2}+t^{6/5}\ep_1^2/R^{4/3},\\
\label{growthZ2}
\sup_{[0,t]}\|V\|_Z
&\lesssim \ep+(t^{1+9/N}/R^{4/3}+1)\ep_1^2+(t^{3+33/N}/R^{10/3-2/N}+1)\ep_1^3.
\end{align}
\end{proposition}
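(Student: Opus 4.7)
The plan is to establish the energy bound (\ref{growth-Em-Z}) and the $Z$-norm bound (\ref{growthZ2}) independently, each exploiting the absence of time resonances for the Klein--Gordon evolution through an appropriate normal form, and then close the bootstrap via a standard continuity argument as in the proof of Theorem \ref{Thm1}.

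For the energy bound, I would apply the quartic energy estimates developed in Section \ref{EneEst}. Since the resonance relation $\Lambda(\xi) \pm \Lambda(\eta) \pm \Lambda(\xi-\eta) = 0$ has no solutions on nonzero frequencies, the natural $H^N$ energy admits a symmetric quadratic normal-form correction of size $O(\|U\|_X\|U\|_{H^N}^2) = O(\epsilon_1^3)$; this accounts, after taking square roots, for the $\epsilon_1^{3/2}$ contribution in (\ref{growth-Em-Z}). The modified energy evolves by cubic and higher terms, with the worst piece bounded schematically by $\int_0^t \|U(s)\|_X^2 \|U(s)\|_{H^N}^2\, ds$, the loss of derivatives from quasilinearity being absorbed by paradifferential calculus from Section \ref{LinEst}. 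The linear dispersive estimate of Section \ref{LinEst}, applied to $V$ with $\|V\|_Z \le \epsilon_1$, gives an $L^\infty$-type bound of the schematic form
\[
\|U(s)\|_X \lesssim \epsilon_1\,(1+s/R)^2 (1+s)^{-2/3},
\]
combining the Euclidean $s^{-2/3}$ decay (the precise benefit of the weight exponent $2/3$ in the $Z$-norm) with the periodic wave-packet wrap-around after time $R$. Substituting into the energy integral and optimizing across $s \lesssim R$ versus $s \gtrsim R$ produces the growth factor $t^{6/5}/R^{4/3}$.

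For the $Z$-norm bound, I would use the Duhamel representation for the profile $V(t) = e^{it\Lambda}U(t)$,
\[
V(t) = V(0) + \int_0^t e^{is\Lambda}\mathcal N(U(s))\,ds,
\]
with $\mathcal N$ the quadratic nonlinearity read off from (\ref{XtYt}). Decomposing $U = U_+ + U_-$ with $U_\pm = e^{\mp is\Lambda} V_\pm$ and expanding in Fourier produces bilinear integrals with phase $\Phi_{\iota_1\iota_2}(\xi,\eta) = \Lambda(\xi) - \iota_1\Lambda(\eta) - \iota_2\Lambda(\xi-\eta)$, which is bounded away from zero on nonzero frequencies. A single integration by parts in $s$ converts each bilinear integral into boundary normal-form terms $\Phi^{-1}B(V,V)$ evaluated at $s=0$ and $s=t$, plus a cubic time integral in which $\partial_s V$ has been substituted back via the equation. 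The boundary terms contribute the $\epsilon_1^2$ piece of (\ref{growthZ2}), bounded in $Z$-norm via the multilinear paraproduct estimates of Section \ref{LinEst}, with the factor $t^{1+9/N}/R^{4/3}$ emerging from the $L^\infty$ dispersive bound together with the volume factor $(1+t/R)^2$ and a small Bernstein loss proportional to $1/N$. The cubic time integral gives the $\epsilon_1^3$ piece with coefficient $t^{3+33/N}/R^{10/3-2/N}$, reflecting three wrap-around factors of $(1+t/R)^2$ compounded with polynomial-in-$t$ growth from the trilinear multiplier bounds.

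The main obstacle will be controlling the $Z$-norm of the cubic Duhamel integrals. The spatial weight $\|x\|^{2/3}$ corresponds on the Fourier side to a fractional derivative $|\partial_\xi|^{2/3}$, which must be distributed carefully across a three-fold convolution sitting inside a doubly oscillatory integral in $s$ and $\eta$. The central difficulty is the space-resonant set $\{\nabla_\eta \Phi = 0\}$: away from it, repeated integration by parts in $\eta$ produces rapid decay, but near it stationary phase only yields borderline decay, so one must exploit the explicit structure of the trilinear multiplier together with the regularity encoded in the $Z$-norm via Lemma \ref{XZ-bound}(iv) and the $X$-norm dispersive bound. The $O(1/N)$ losses in the exponents of (\ref{growthZ2}) arise from Bernstein inequalities applied to Littlewood--Paley pieces, with $N$ controlling how many derivatives are available to absorb logarithmic sums over the frequency scales.
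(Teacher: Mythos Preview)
Your overall architecture matches the paper: (\ref{growth-Em-Z}) comes from the quartic energy estimates of Section~\ref{EneEst} fed with the $X$-norm decay of Lemma~\ref{dispersiveZ} (note the wrap-around exponent there is $4/3$, not $2$), and (\ref{growthZ2}) comes from the normal-form decomposition (\ref{V=W+H}) into quadratic boundary terms $W_{\mu\nu}$ and cubic bulk terms $H_{\mu\nu\rho}$, handled in Propositions~\ref{W-Z} and~\ref{H-Z}.

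Where you diverge is in the mechanism for the $Z$-norm estimates. You propose to interpret the weight $\|x\|^{2/3}$ as a Fourier-side fractional derivative $|\partial_\xi|^{2/3}$ and to run a space-resonance analysis via integration by parts in $\eta$. On the torus this is the wrong paradigm: the frequency variable lives on the lattice $(2\pi\Z/R)^2$, so $\partial_\xi$ is not available and the Germain--Masmoudi--Shatah machinery does not transplant cleanly. The paper instead stays entirely in physical space. It writes $\|W\|_Z\approx\big\|2^{2j/3}\|Q_j\Lambda^{M+2}W\|_{L^2}\big\|_{\ell^2_j}$ and treats each spatial scale $2^j$ separately: for $j\lesssim\mathcal L=\log(1+t)$ one applies the dispersive bound (Lemma~\ref{dispersive}) directly to the inputs, while for $j\gtrsim\mathcal L$ the key point is that $|\nabla\Lambda|<1$, so the bilinear/trilinear kernel is concentrated in $\{|x-y|\lesssim t\ll 2^j\}$ and Lemma~\ref{int-part-cor} (non-stationary phase, after Poisson summation) gives rapid decay of $Q_jW$ once one input is localized near the origin. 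No analysis of $\{\nabla_\eta\Phi=0\}$ enters. What the paper's route buys is compatibility with the discrete frequency lattice and a direct match with the physical-space definition of $Z$; what your route would buy, in the Euclidean setting, is a more systematic treatment of multilinear phases---but here it would require first passing through Poisson summation to continuous frequencies, which is effectively the paper's argument in disguise.
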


\begin{proof}[Proof of Theorem \ref{Thm2}]
We can choose $\ep_1\approx\ep$ and $T_0\approx R^{\frac{10/3-2/N}{3+33/N}}\ep^{-\frac{2}{3+33/N}}$ such that for $t\le T_0$, (\ref{growth-Em-Z}) and (\ref{growthZ2}) lead to (\ref{growthZ}) with the strict inequality. Then the proof is similar to that of Theorem \ref{Thm1}.
\end{proof}

\section{Linear dispersive and multilinear paraproduct estimates}\label{LinEst}
\subsection{Linear dispersive estimates}
The first ingredient in the proof of global existence is dispersive estimates.
In the following we will use the fact that $P_{<-\mathcal L}=0$ unless $t\lesssim R$.

\begin{lemma}\label{dispersive}
For $k\in\Z$, $1\le p\le q\le\infty$ with $1/p+1/q=1$ we have
\[
\|P_ke^{-it\Lambda}u\|_{L^q}\lesssim [(1+t)^{-1}(t/R+1)^22^{2k^+}]^{1/p-1/q}\|u\|_{L^p}.
\]
\end{lemma}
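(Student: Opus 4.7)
The plan is to reduce everything to the endpoint $L^1\to L^\infty$ bound and transfer Euclidean dispersive estimates to the torus via Poisson summation. By Plancherel, $e^{-it\Lambda}$ is unitary on $L^2$ and $P_k$ has operator norm $O(1)$, which handles the $p=q=2$ case. By Riesz--Thorin interpolation, the general inequality then follows from the endpoint
\[
\|P_k e^{-it\Lambda}u\|_{L^\infty}\lesssim (1+t)^{-1}(t/R+1)^2\,2^{2k^+}\|u\|_{L^1},
\]
which in turn reduces via Young's convolution inequality on the torus to a pointwise $L^\infty$ bound on the convolution kernel of $P_k e^{-it\Lambda}$.

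The kernel is the lattice sum
\[
K_k^T(t,x)=\frac{1}{R^2}\sum_{\xi\in (2\pi\Z/R)^2}\varphi_k(\xi)\,e^{i(x\cdot\xi-t\Lambda(\xi))},
\]
and the Poisson summation formula rewrites it as the periodization
\[
K_k^T(t,x)=\sum_{n\in R\Z^2}K_k^{\R^2}(t,x+n),\qquad K_k^{\R^2}(t,y)=\frac{1}{(2\pi)^2}\int_{\R^2}\varphi_k(\xi)\,e^{i(y\cdot\xi-t\Lambda(\xi))}\,d\xi,
\]
of the corresponding Euclidean kernel. For $K_k^{\R^2}$ we invoke two standard facts. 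In the wave zone $|y|\lesssim t$, stationary phase in two dimensions gives the classical Klein--Gordon decay $|K_k^{\R^2}(t,y)|\lesssim (1+t)^{-1}2^{2k^+}$. Outside the wave zone, where $|y|\gg t$, the group velocity $\nabla\Lambda(\xi)=\xi/\sqrt{1+|\xi|^2}$ has magnitude strictly less than $1$, so on the frequency support we have the non-stationary lower bound $|y-t\nabla\Lambda(\xi)|\ge |y|/2$; iterated integration by parts then produces rapid decay $|K_k^{\R^2}(t,y)|\lesssim_A 2^{2k^+}(1+|y|)^{-A}$ for any $A$, with each integration trading a factor $|y|^{-1}$ for harmless powers of $2^k$ coming from derivatives of $\varphi_k$ and of $(y-t\nabla\Lambda)^{-1}$.

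Fixing $x$, the lattice translates $x+n$ lying within distance $O(t+R)$ of the origin number at most $O((t/R+1)^2)$; each contributes at most $(1+t)^{-1}2^{2k^+}$ by the wave-zone estimate, while the remaining translates form a rapidly convergent tail bounded by $O(2^{2k^+})$, which is dominated by the main term since $(t/R+1)^2\ge 1$. Summing over $n$ yields
\[
\|K_k^T(t,\cdot)\|_{L^\infty}\lesssim (1+t)^{-1}(t/R+1)^2 2^{2k^+},
\]
completing the endpoint bound and hence, by interpolation, the lemma.

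The main technical obstacle is the non-stationary phase estimate outside the wave zone: one needs decay in $|y|$ strong enough that the tail of the periodization converges uniformly and is dominated by the wave-zone contribution, and one must keep the $k$-dependence of the constants under control. Low frequencies $k\le 0$ require a small separate check, because there the $2^{2k^+}$ factor saturates at $1$ while the cutoff $\varphi_k$ has large relative derivatives; this is handled by the crude volume bound $\|K_k^{\R^2}(t,\cdot)\|_{L^\infty}\lesssim 2^{2k}$, together with the observation that for $k$ so small that $2^k\lesssim 1/R$ the remark preceding the lemma ensures $P_k$ is nontrivial only in the regime $t\lesssim R$, where the claimed estimate becomes consistent with this trivial bound.
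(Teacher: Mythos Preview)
Your strategy is exactly the paper's: interpolate from the $L^1\to L^\infty$ endpoint, pass to the Euclidean kernel by Poisson summation, use stationary phase on the $O((t/R+1)^2)$ translates inside the wave zone, and non-stationary phase on the rest. Two points, however, do not close as written.

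First, the tail bookkeeping is off. You assert the remaining translates are ``bounded by $O(2^{2k^+})$'' and then compare to the main term using only $(t/R+1)^2\ge1$. But the main term is $(1+t)^{-1}(t/R+1)^2 2^{2k^+}$, which for $1\ll t\lesssim R$ is of size $t^{-1}2^{2k^+}\ll 2^{2k^+}$; a bare $O(2^{2k^+})$ tail would swamp it. You must actually use the rapid decay: with $|y|\gtrsim t$ the non-stationary bound gives $|K_k^{\R^2}(t,y)|\lesssim 2^{2k^+}|y|^{-3}$, and summing over the shifted lattice yields $\lesssim 2^{2k^+}t^{-1}$, which is what the paper records.

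Second, and more substantively, your low-frequency fix does not work. For $k<0$ the derivatives $|\partial^\alpha\varphi_k|\sim 2^{-|\alpha|k}$ are large, so neither the stationary-phase constant nor the non-stationary decay $(1+|y|)^{-A}$ is uniform in $k$; the ``harmless powers of $2^k$'' are in fact $2^{-k}\gg1$. Your proposed remedy---the volume bound $2^{2k}$ plus the remark that $P_k$ vanishes for $2^k\lesssim 1/R$---leaves the range $-\log R\lesssim k\le 0$ with only the bound $|K_k^T|\lesssim 1$, which does not recover $(1+t)^{-1}$ for $1\ll t\lesssim R$. The paper sidesteps this cleanly in one line: since $P_k=P_kP_{\le0}$ for $k<0$ and $P_k$ is bounded on $L^\infty$, it suffices to treat the single amplitude $\varphi_{\le0}$, whose derivatives are uniformly bounded, so both the stationary and non-stationary estimates go through with constants independent of $k$.
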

\begin{proof}
By interpolation and unitarity of $e^{-it\Lambda}$ we can assume $p=1$ and $q=\infty$.
Since $P_k=P_kP_{\le0}$ for $k<0$, and $P_k$ is bounded on $L^\infty$, we can further assume $k\in\N$ or $k$ is ``$\le0$".
By Poisson summation,
\begin{align*}
P_ke^{-it\Lambda}u(x)
&=\frac{C}{R^2}\sum_{\xi\in(2\pi\Z/R)^2} e^{ix\cdot\xi-it\Lambda(\xi)}\varphi_k(\xi)\hat u(\xi)
=\int G_k(x,y,t)u(y)dy,\\
G_k(x,y,t)&=\sum_{z\in(R\Z)^2} \mathcal K_k(x,y+z,t),\\
\mathcal K_k(x,y,t)&=\int e^{i(x-y)\cdot\xi-it\Lambda(\xi)}\varphi_k(\xi)d\xi.
\end{align*}
Trivially $|\mathcal K_k(x,y+z,t)|\lesssim2^{2k}$.
When $t\ge1$, we can get a better bound
using the method of stationary phase. The gradient of the phase is
\[
\nabla((x-y-z)\cdot\xi-t\Lambda(\xi))=x-y-z-t\xi/\Lambda(\xi).
\]
It vanishes only when $|x-y-z|\le2t$, which happens for $O((t/R+1)^2)$ values of $z$. For such $z$, in a coordinate whose $x$-axis points in the direction of $\xi$, the Hessian of the phase is
\[
-t
\begin{pmatrix}
\Lambda''(\xi) & 0\\
0 & \Lambda'(\xi)/|\xi|
\end{pmatrix}
=-t
\begin{pmatrix}
1/\Lambda(\xi)^3 & 0\\
0 & 1/\Lambda(\xi)
\end{pmatrix},
\]
with nonvanishing determinant $t^2/\Lambda(\xi)^4$. Then by stationary phase,
\[
|\mathcal K_k(x,y+z,t)|\lesssim (t^2/\Lambda(\xi)^4)^{-1/2}\lesssim t^{-1}2^{2k}.
\]
For other values of $z$, the gradient of the phase is $\gtrsim|x-y-z|$, so
\[
\left| \sum_{z\in(R\Z)^2\atop|x-y-z|>2t} \mathcal K_k(x,y+z,t) \right|
\lesssim 2^{2k}\sum_{z\in(R\Z)^2\atop|x-y-z|>2t}|x-y-z|^{-3}
\lesssim t^{-1}2^{2k}.
\]
Combining the two bounds shows the claim.
\end{proof}

\begin{lemma}\label{dispersiveZ}
(i) For $k\in\Z$ we have
\[
\|P_ke^{-it\Lambda}u\|_{L^\infty}
\lesssim 2^{k/3-(M+2/3)k^+}(1+t)^{-\frac23+}(t/R+1)^{4/3}\|u\|_Z.
\]

(ii)
\[
\|e^{-it\Lambda}u\|_X\lesssim (1+t)^{-\frac23+}(t/R+1)^{4/3}\|u\|_Z.
\]
\end{lemma}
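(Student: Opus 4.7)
The goal is to upgrade the $L^p \to L^q$ dispersive estimate of Lemma \ref{dispersive} to an $L^\infty$ estimate whose right-hand side is the $Z$-norm. The key observation is that Lemma \ref{XZ-bound}(i) provides the input $\|P_k u\|_{L^{6/5+}} \lesssim 2^{-(M+2)k^+}\|u\|_Z$ (which reflects H\"older with $(1+\|x\|)^{-2/3}\in L^{3+}$, i.e., the $2/3$ weight exponent converts to the dual Lebesgue exponent $6/5$). So I will interpolate Lemma \ref{dispersive} at the dual pair $(p,q)=(6/5+,6-)$, then upgrade from $L^{6-}$ to $L^\infty$ by the Bernstein inequality on the torus. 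At that critical pair, $1/p-1/q = 2/3-$, so the factor $[(1+t)^{-1}(t/R+1)^22^{2k^+}]^{1/p-1/q}$ becomes $(1+t)^{-2/3+}(t/R+1)^{4/3}\,2^{4k^+/3}$, which is exactly the decay rate and $R$-dependence that appear in (i).

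\textbf{Part (i).} I combine three inputs in order:
\[
\|P_k u\|_{L^{6/5+}}\lesssim 2^{-(M+2)k^+}\|u\|_Z \;\overset{\text{Lem \ref{dispersive}}}{\Longrightarrow}\; \|P_k e^{-it\Lambda}u\|_{L^{6-}}\lesssim (1+t)^{-\frac23+}(t/R+1)^{4/3}2^{4k^+/3-(M+2)k^+}\|u\|_Z,
\]
and then invoke Bernstein on $\TT^2_R$ at frequency $2^k$, namely $\|P_k f\|_{L^\infty}\lesssim 2^{k/3+}\|P_k f\|_{L^{6-}}$. Multiplying the exponents in $k$ gives $k/3 + 4k^+/3 - (M+2)k^+ = k/3 - (M+2/3)k^+$, which is exactly the power claimed. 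The small $+$ losses from choosing $6/5+$ and $6-$ get absorbed into the single $+$ attached to $(1+t)^{-2/3}$. The only subtlety is the very low frequency regime $2^k\lesssim 1/R$ where discrete Bernstein must be handled carefully; there $P_k u$ lives on at most the zero mode of $\TT^2_R$, so the bound is trivial (and in the applications of this lemma $U$ has zero mean by the charge-neutrality and zero-momentum conditions).

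\textbf{Part (ii).} This is immediate from (i) by summing in $k$ against the weight $2^{Mk^+}$ defining the $X$-norm:
\[
\|e^{-it\Lambda}u\|_X \le \sum_{k\in\Z} 2^{Mk^+}\|P_k e^{-it\Lambda}u\|_{L^\infty} \lesssim (1+t)^{-\frac23+}(t/R+1)^{4/3}\|u\|_Z\sum_{k\in\Z} 2^{k/3-(2/3)k^+}.
\]
The series is a two-sided geometric series ($2^{-k/3}$ for $k\ge0$ and $2^{k/3}$ for $k<0$), hence $O(1)$, yielding (ii).

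\textbf{Main obstacle.} The argument itself is essentially an interpolation exercise; the only real care required is in bookkeeping the two small $+$ losses (one from the Lorentz-type exponent in Lemma \ref{XZ-bound}(i), one from the Bernstein step) and ensuring they sum into a single controllable $+$ in the final decay rate $(1+t)^{-2/3+}$, and in verifying the low-frequency Bernstein step on the torus, which is standard but worth remarking.
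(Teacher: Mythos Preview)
Your proposal is correct and matches the paper's proof. The paper parametrizes the same chain of inequalities by $c\in(1/3,1/2)$, writing $\|P_ke^{-it\Lambda}u\|_{L^\infty}\lesssim 2^{ck}\|P_ke^{-it\Lambda}u\|_{L^{2/c}}$ (Bernstein), then applying Lemma~\ref{dispersive} with $(p,q)=(2/(2-c),2/c)$, and finally Lemma~\ref{XZ-bound}(i); taking $c\downarrow 1/3$ is exactly your choice $(p,q)=(6/5+,6-)$, so the two arguments are the same up to notation.
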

\begin{proof}
By the Bernstein inequality and Lemma \ref{dispersive}, for any $c\in(1/3,1/2)$,
\begin{align*}
\|P_ke^{-it\Lambda}u\|_{L^\infty}&\lesssim 2^{ck}\|P_ke^{-it\Lambda}u\|_{L^{2/c}}\\
&\lesssim 2^{ck+2(1-c)k^+}(1+t)^{-1+c}(t/R+1)^{2(1-c)}\|P_{[k-1,k+1]}u\|_{L^{2/(2-c)}}.
\end{align*}
Then (i) follows from Lemma \ref{XZ-bound} (i).
To get (ii) we sum (i) over $k\in\Z$.
\end{proof}

\begin{lemma}\label{dispersiveTT*}
\[
\|e^{-is\Lambda}u\|_{L^2([0,t])X}\lesssim \mathcal L_R\sqrt{1+t/R}\|u\|_{H^{M+2}}.
\]
\end{lemma}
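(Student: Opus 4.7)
The plan is to combine a $TT^*$-based short-time Strichartz estimate on intervals of length $\le R$ with the $L^2$-isometry property of $e^{-is\Lambda}$. The main obstacle is that the dispersive kernel bound from Lemma \ref{dispersive}, namely $(1+|\tau|)^{-1}(|\tau|/R+1)^2\,2^{2k^+}$, is only $L^1_\tau$-integrable over $[-R,R]$: on longer intervals the wrapping factor $(|\tau|/R)^2$ blows up in $L^1$, and a direct Young-in-time bound yields a spurious $(t/R)^2$ instead of the sharp $(1+t/R)$.

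First I would use Minkowski in $k$ to reduce to a per-frequency bound,
\[
\|e^{-is\Lambda}u\|_{L^2([0,t])X}\le\sum_{k}2^{Mk^+}\|P_ke^{-is\Lambda}u\|_{L^2([0,t])L^\infty_x},
\]
and prove the following short-time claim: for any interval $I\subset\R$ with $|I|\le R$,
\[
\|P_ke^{-is\Lambda}u\|_{L^2(I)L^\infty_x}^2\lesssim 2^{2k^+}\mathcal L_R\|P_ku\|_{L^2}^2.
\]
This is the standard $TT^*$ argument: $TT^*$ acts on $f(s,x)$ by time-convolution with the operator-valued kernel $P_k^2e^{-i\tau\Lambda}$, whose $L^1_x\to L^\infty_x$ norm is $\phi_k(\tau)\lesssim 2^{2k^+}(1+|\tau|)^{-1}(|\tau|/R+1)^2$ by Lemma \ref{dispersive}. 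Young's inequality in $\tau$ gives
\[
\|TT^*\|_{L^2(I)L^1_x\to L^2(I)L^\infty_x}\le\|\phi_k\|_{L^1([-R,R])}\lesssim 2^{2k^+}\log R,
\]
since on $[-R,R]$ the factor $(|\tau|/R+1)^2$ is $O(1)$.

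For general $t$ I would partition $[0,t]$ into $N\lesssim 1+t/R$ disjoint intervals $[s_j,s_{j+1}]$ of length $\le R$. On each, write $e^{-is\Lambda}u=e^{-i(s-s_j)\Lambda}v_j$ with $v_j=e^{-is_j\Lambda}u$ and apply the short-time claim; the unitarity of $e^{-is\Lambda}$ on $L^2$ together with $[P_k,e^{-is\Lambda}]=0$ gives $\|P_kv_j\|_{L^2}=\|P_ku\|_{L^2}$, and summing the squared $L^2$-in-time norms in $j$ yields
\[
\|P_ke^{-is\Lambda}u\|_{L^2([0,t])L^\infty_x}^2\lesssim(1+t/R)\,2^{2k^+}\mathcal L_R\|P_ku\|_{L^2}^2.
\]

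Finally I would sum in $k$, splitting into $k\ge0$ and $-\mathcal L_R\le k<0$ (using that $P_k=0$ for $k<-\mathcal L_R+O(1)$ because the lattice of Fourier modes on the torus has spacing $2\pi/R$). For $k\ge 0$, Cauchy--Schwarz against the weight $2^{-k}$ absorbs $\sum 2^{(M+1)k}\|P_ku\|_{L^2}$ into $\|u\|_{H^{M+2}}$; for $k<0$, a Cauchy--Schwarz over the $\lesssim\mathcal L_R$ many low-frequency dyadic scales produces an extra $\mathcal L_R^{1/2}$ multiplying $\|u\|_{L^2}$. Combined with the $\mathcal L_R^{1/2}$ from the short-time bound and the $(1+t/R)^{1/2}$ from the interval count, this gives the claimed $\mathcal L_R\sqrt{1+t/R}\|u\|_{H^{M+2}}$. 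The essential point is that the $R$-length partition converts the naive Young loss $(t/R)^2$ into the sharp $L^2$-in-time loss $1+t/R$, in the spirit of the Christ--Kiselev gain.
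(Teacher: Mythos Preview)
Your proposal is correct and follows essentially the same approach as the paper: the paper likewise runs a $TT^*$ argument on intervals of length $\le R$ using the kernel bound from Lemma \ref{dispersive} and Young's inequality in time, extends to $t>R$ by taking an $\ell^2$ sum over $\lesssim 1+t/R$ subintervals via unitarity of $e^{-is\Lambda}$, and then sums over $k\ge-\mathcal L_R$ by Cauchy--Schwarz to pick up the extra $\mathcal L_R^{1/2}$ from the low frequencies. The only cosmetic difference is that the paper phrases the per-frequency step as bounding $\|T_k\|=\|T_kT_k^*\|^{1/2}$ directly rather than via your short-time ``claim,'' and your closing reference to Christ--Kiselev is a slight mislabel (this is simply partition-plus-unitarity, not the Christ--Kiselev lemma).
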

\begin{proof}
For $k\in\Z$ let
\begin{align*}
T_k&:L_x^2\to L^2([0,t])L_x^\infty, & u&\mapsto P_ke^{-is\Lambda}u.
\end{align*}
Then
\begin{align*}
T_k^*&:L^2([0,t])L_x^1\to L_x^2, & u&\mapsto \int_0^t P_ke^{is\Lambda}u(s)ds.
\end{align*}
Then $\|T_k\|=\|T_k^*\|=\|T_kT_k^*\|^{1/2}$, where
\begin{align*}
T_kT_k^*&:L^2([0,t])L_x^1\to L^2([0,t])L_x^\infty, & u&\mapsto \int_0^t P_k^2e^{i(s'-s)\Lambda}u(s')ds'.
\end{align*}

First we suppose $t\le R$. By Lemma \ref{dispersive} (note that $t/R+1\le2$),
\[
\|P_k^2e^{i(s'-s)\Lambda}u(s')\|_{L_x^\infty}\lesssim (1+|s'-s|)^{-1}2^{2k^+}\|u(s)\|_{L_x^1}.
\]
Then by Young's inequality (note that now $\mathcal L_R>\log(1+t)$),
\[
\|T_kT_k^*u\|_{L^2([0,t])L_x^\infty}\lesssim \|(1+|\cdot|)^{-1}\|_{L^1([0,t])}2^{2k^+}\|u\|_{L^2([0,t])L_x^1}<2^{2k^+}\mathcal L_R\|u\|_{L^2([0,t])L^1_x}
\]
so $\|T_k\|\lesssim 2^{k^+}\sqrt{\mathcal L_R}$. For $t>R$ we use the unitarity of $e^{it\Lambda}$ to take an $\ell^2$ sum of time intervals of length $R$ to get $\|T_k\|\lesssim 2^{k^+}\sqrt{\mathcal L_R(1+t/R)}$.

Applying the above bound to $P_{[k-1,k+1]}u$ and summing over $k\ge-\mathcal L_R$, using the Cauchy--Schwarz inequality on the right-hand side,
we get the desired bound.
\end{proof}

\subsection{Paradifferential calculus}
We will use Weyl quantization of paradifferential operators on the torus.
\begin{definition}
Given a symbol $a=a(x,\zeta): (\R/R\Z)^2\times(\R^2\backslash0)\to\C$, define the operator $T_a$ using the following recipe:
\begin{equation}\label{para-def}
\mathcal F(T_af)(\xi)=\frac{C}{R^2}\sum_{\eta\in(2\pi\Z/R)^2}
\varphi_{\le-10}\left( \frac{|\xi-\eta|}{|\xi+\eta|} \right)\mathcal F_x a\left( \xi-\eta,\frac{\xi+\eta}2 \right)\hat f(\eta),
\end{equation}
where $C$ is a normalization constant (independent of $R$) such that $T_1=\text{id}$.
\end{definition}
\begin{remark}
With the inclusion of the factor $\varphi_{\le-10}$, only low frequencies of $a$ and high frequencies of $f$ are involved in $T_af$.
When $\xi+\eta=0$, this factor is taken as 0, so $a(x,0)$ will never be used. Also the case $\xi=\eta=0$ is of no concern, for $T_a$ will only act on $U$, for which we have assumed $\mathcal FU(0)=0$.
\end{remark}

The next lemma follows directly from the definition.

\begin{lemma}\label{para2diff}
(i) If $a$ is real valued, then $T_a$ is self-adjoint.

(ii) If $a(x,-\zeta)=\overline{a(x,\zeta)}$ and $f$ is real valued, so is $T_af$.

(iii) If $a=P(\zeta)$, then $T_af=P(D)f$ is a Fourier multiplier.

(iv) For $k\in\Z$ we have $P_kT_a(P_{\le k-2}f)=0$.

(v) If $a=a(x)$, then for $k\in\Z$ we have $P_kT_{P_{\le k-20}a}f=P_k(P_{\le k-20}a\cdot f)$.
\end{lemma}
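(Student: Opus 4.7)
The plan is straightforward: each part follows by unwinding the Fourier-space formula (\ref{para-def}), so the proof amounts to matching terms on both sides.

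For (i) and (ii), I would view (\ref{para-def}) as an infinite matrix with kernel $K(\xi,\eta) = R^{-2}\varphi_{\le-10}(|\xi-\eta|/|\xi+\eta|)\mathcal F_x a(\xi-\eta,(\xi+\eta)/2)$. The cutoff factor is manifestly symmetric under $\xi\leftrightarrow\eta$ and under $(\xi,\eta)\mapsto(-\xi,-\eta)$. For (i), self-adjointness reduces to $\overline{K(\xi,\eta)} = K(\eta,\xi)$, which after the $\xi\leftrightarrow\eta$ symmetry collapses to $\overline{\mathcal F_x a(\sigma,\zeta)} = \mathcal F_x a(-\sigma,\zeta)$, the Fourier-side statement that $a$ is real. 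For (ii), $f$ real gives $\overline{\hat f(\eta)} = \hat f(-\eta)$, while the hypothesis $a(x,-\zeta) = \overline{a(x,\zeta)}$ translates, after taking $\mathcal F_x$, to $\overline{\mathcal F_x a(\sigma,\zeta)} = \mathcal F_x a(-\sigma,-\zeta)$. Substituting $\eta\mapsto-\eta$ in the sum defining $\mathcal F(T_a f)(-\xi)$ and invoking both symmetries of the cutoff yields $\mathcal F(T_a f)(-\xi) = \overline{\mathcal F(T_a f)(\xi)}$, i.e.\ $T_a f$ is real.

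For (iii), if $a=P(\zeta)$ is independent of $x$ then $\mathcal F_x a(\sigma,\zeta) = R^2\delta_{\sigma,0}P(\zeta)$, collapsing the sum in (\ref{para-def}) to the single term $\eta=\xi$, which returns $C\cdot\varphi_{\le-10}(0)\cdot P(\xi)\hat f(\xi) = P(\xi)\hat f(\xi)$; the normalization $T_1 = \mathrm{id}$ simultaneously fixes $C=1$.

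For (iv) and (v), the key observation is that on the pairs $(\xi,\eta)$ selected by the Littlewood--Paley projectors in the statement, the cutoff $\varphi_{\le-10}(|\xi-\eta|/|\xi+\eta|)$ is either identically $0$ or identically $1$. For (iv), the Fourier supports $|\xi|\ge 2^k/3$ and $|\eta|\le 3\cdot 2^{k-3}$ force $|\xi-\eta|/|\xi+\eta| \ge 1/17 > 2^{-10}$ (worst case: $\eta$ aligned with $\xi$ at the two boundary values), so the cutoff vanishes identically. For (v), the presence of $P_{\le k-20}a$ restricts $|\xi-\eta|\le 3\cdot 2^{k-21}$, and combined with $|\xi|\ge 2^k/3$ this gives $|\xi+\eta|\ge |\xi|-|\xi-\eta|\gtrsim 2^k$, hence $|\xi-\eta|/|\xi+\eta|\lesssim 2^{-20}$, well inside the plateau where $\varphi_{\le-10}=1$; once the cutoff is replaced by $1$, the remaining sum is exactly the Fourier transform of the pointwise product $P_{\le k-20}a\cdot f$ by the convolution theorem on the torus, and applying $P_k$ to both sides yields the claim. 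The only real obstacle---if one can call it that---is verifying the numerical separation constants ($-10$ inside $\varphi_{\le-10}$, and the gaps $k-2$ and $k-20$) are wide enough to produce strict inequalities with room to spare; these are precisely tuned in the statement.
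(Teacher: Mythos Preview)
Your arguments for (i), (ii), (iii), and (v) are correct and are exactly the direct verification the paper has in mind (the paper itself gives no details beyond ``follows directly from the definition'').

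There is, however, a genuine gap in your argument for (iv). With the paper's cutoff $\varphi$ equal to $1$ on $B(0,2/3)$ and $0$ outside $B(0,3/2)$, the support of $\varphi_k$ is the annulus $2^k/3 < |\xi| < 3\cdot 2^{k-1}$, while the support of $\varphi_{\le k-2}$ is the ball $|\eta| < 3\cdot 2^{k-3}$. Since $2^k/3 < 3\cdot 2^{k-3}$ (i.e.\ $1/3 < 3/8$), these two regions \emph{overlap}: one may take $\xi = \eta$ with $|\xi| \in (2^k/3,\, 3\cdot 2^{k-3})$, and then the ratio $|\xi-\eta|/|\xi+\eta|$ equals $0$, so the cutoff does \emph{not} vanish. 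Your ``worst case'' computation of $1/17$ tacitly assumes the minimum of the ratio occurs at the boundary $|\xi|=2^k/3$, $|\eta|=3\cdot 2^{k-3}$, but the actual infimum over the constraint region is $0$, attained in the interior.

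In fact, as literally stated with the gap $k-2$, part (iv) appears to be false for this choice of $\varphi$; the intended statement is presumably with $P_{\le k-3}$ (or any gap $\ge 3$), for which $3\cdot 2^{k-4} < 2^k/3$ and your approach works verbatim, yielding a lower bound of $7/25$ on the ratio. This is harmless for the paper's applications (the only use of (iv), in the proof of Lemma~\ref{PkH-Lp}, involves the much larger gap $k-20$), but your write-up should flag the discrepancy rather than claim the stated version holds.
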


The following symbol norm will be used.
\begin{definition}
For $p\in[1,\infty]$ and $m\in\R$ define
\[
\begin{aligned}
|a|(x,\zeta)
&=\sum_{|I|\le8} |\zeta|^{|I|}|\partial_{\zeta_I} a(x,\zeta)|, &
\|a\|_{\mathcal L_m^p}
&=\sup_{\zeta\in\R^2\backslash0} (1+|\zeta|)^{-m}\||a|(x,\zeta)\|_{L^p_x(\R/R\Z)^2}.
\end{aligned}
\]
\end{definition}

Here $m$ is the {\it order} of the symbol, in the sense of  H\"ormander.
\begin{lemma}\label{Sm-Lm}
A multiplier whose symbol is of class $S^m_{1,0}$ has finite $\mathcal L_m^\infty$ norm.
\end{lemma}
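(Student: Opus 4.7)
The plan is simply to unpack both definitions and verify the inequality term by term. A symbol $a$ of class $S^m_{1,0}$ depending only on $\zeta$ satisfies, by definition,
\[
|\partial^I_\zeta a(\zeta)| \lesssim_I (1+|\zeta|)^{m-|I|}
\]
for every multi-index $I$, and since $a$ has no $x$-dependence the $L^\infty_x$ norm in $\|\cdot\|_{\mathcal L_m^\infty}$ is trivial. So it suffices to bound $|\zeta|^{|I|}|\partial^I_\zeta a(\zeta)|$ by $(1+|\zeta|)^m$ uniformly in $\zeta\ne0$ for each $|I|\le 8$, and then sum in $I$.

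For $|\zeta|\ge 1$ the elementary estimate $|\zeta|^{|I|}\le(1+|\zeta|)^{|I|}$ gives
\[
|\zeta|^{|I|}|\partial^I_\zeta a(\zeta)|\lesssim (1+|\zeta|)^{|I|}\cdot(1+|\zeta|)^{m-|I|}=(1+|\zeta|)^m,
\]
which is exactly what is required. For $|\zeta|\le 1$ the factor $|\zeta|^{|I|}$ is bounded by $1$, while $(1+|\zeta|)^{m-|I|}$ ranges between $2^{m-|I|}$ and $1$ (or vice versa, depending on the sign of $m-|I|$); in any case it is bounded by a constant $C_{m,|I|}$, and a further constant absorbs the ratio between that constant and $(1+|\zeta|)^m\ge 2^{-|m|}$ on the same region. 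Summing over the finitely many multi-indices with $|I|\le 8$ and taking the supremum yields $\|a\|_{\mathcal L_m^\infty}<\infty$.

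I do not expect a genuine obstacle here: the lemma is a bookkeeping check that the seminorm $\mathcal L_m^\infty$ is a weaker quantity than the full collection of $S^m_{1,0}$ seminorms truncated at order $8$. The only point that deserves a moment's attention is the behaviour near $\zeta=0$, where the weights $|\zeta|^{|I|}$ vanish but $(1+|\zeta|)^m$ stays of unit size; that case is handled by the second region above, and no boundary issue arises because $a(x,0)$ is not being tested.
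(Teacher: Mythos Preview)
Your proof is correct; the paper states this lemma without proof, treating it as an immediate consequence of the definitions, and your argument is precisely the routine verification that the paper leaves implicit.
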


For functions independent of $\zeta$, the $\mathcal L_m^p$ norm agrees with the $L^p$ norm.
\begin{lemma}\label{Lmq=Lq}
If $a=a(x)$ and $m\ge0$, then $\|a\|_{\mathcal L_m^p}=\|a\|_{L^p}$.
\end{lemma}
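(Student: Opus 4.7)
The plan is essentially a one-line observation unpacking the definitions. Since $a$ depends only on $x$, every $\zeta$-derivative $\partial_{\zeta_I}a$ with $|I|\ge 1$ vanishes identically, so only the $I=0$ term in the sum defining $|a|(x,\zeta)$ survives. Consequently
\[
|a|(x,\zeta)=|a(x)|
\]
as a function on $(\R/R\Z)^2\times(\R^2\setminus 0)$, independent of $\zeta$.

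Next I would take the $L^p_x$ norm and the weighted supremum. This yields
\[
\|a\|_{\mathcal L_m^p}=\sup_{\zeta\in\R^2\setminus 0}(1+|\zeta|)^{-m}\|a\|_{L^p_x}=\|a\|_{L^p}\cdot\sup_{\zeta\in\R^2\setminus 0}(1+|\zeta|)^{-m}.
\]
Because $m\ge 0$, the factor $(1+|\zeta|)^{-m}$ is a nonincreasing function of $|\zeta|$ bounded above by $1$, with supremum equal to $1$ attained in the limit $\zeta\to 0$. This gives the claimed equality $\|a\|_{\mathcal L_m^p}=\|a\|_{L^p}$.

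There is no genuine obstacle here; the only mild subtlety is that the supremum in the definition excludes $\zeta=0$, but since $(1+|\zeta|)^{-m}\to 1$ as $|\zeta|\to 0^+$, the supremum is still $1$. The hypothesis $m\ge 0$ is essential: without it, $(1+|\zeta|)^{-m}$ would blow up as $|\zeta|\to\infty$ and the identity would fail.
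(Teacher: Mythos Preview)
Your proof is correct and is exactly the unpacking of definitions the paper has in mind; the paper states this lemma without proof, treating it as immediate from the definition of $|a|(x,\zeta)$ and $\|\cdot\|_{\mathcal L_m^p}$.
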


The norm of the product of two operators can be bounded using Leibniz's rule and H\"older's inequality. The result is
\begin{lemma}\label{prod-Lm}
For fixed $m\in\R$, $p,q,r\in[1,\infty]$ with $1/p=1/q+1/r$ we have
\[
\|ab\|_{\mathcal L_{m+n}^p}\lesssim \|a\|_{\mathcal L_m^q}\|b\|_{\mathcal L_n^r}.
\]
\end{lemma}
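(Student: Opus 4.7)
The plan is to reduce the lemma to the pointwise (in $\zeta$) submultiplicativity of the $|\cdot|(x,\zeta)$ seminorm, and then to apply H\"older's inequality in the $x$ variable. The proof is essentially a combination of Leibniz's rule for $\partial_\zeta$ derivatives and H\"older's inequality, with no delicate point hidden beyond bookkeeping.

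First, I would establish the pointwise bound
\[
|ab|(x,\zeta)\lesssim |a|(x,\zeta)\,|b|(x,\zeta).
\]
By Leibniz's rule, for any multi-index $I$ with $|I|\le 8$,
\[
\partial_{\zeta_I}(ab)=\sum_{I=I_1\sqcup I_2}\binom{I}{I_1}\partial_{\zeta_{I_1}}a\cdot \partial_{\zeta_{I_2}}b,
\]
so writing $|\zeta|^{|I|}=|\zeta|^{|I_1|}|\zeta|^{|I_2|}$, distributing the weight between the two factors, and summing over $|I|\le 8$, each resulting product $|\zeta|^{|I_1|}|\partial_{\zeta_{I_1}}a|\cdot|\zeta|^{|I_2|}|\partial_{\zeta_{I_2}}b|$ appears a bounded number of times (with combinatorial coefficients bounded by $2^{|I|}\le 2^8$). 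This yields the desired pointwise bound with a constant independent of $m$, $n$, $p$, $q$, $r$.

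Second, I would apply H\"older's inequality in $x$ with exponents $1/p=1/q+1/r$ to get
\[
\big\||ab|(\cdot,\zeta)\big\|_{L^p_x}\lesssim \big\||a|(\cdot,\zeta)\big\|_{L^q_x}\cdot\big\||b|(\cdot,\zeta)\big\|_{L^r_x}.
\]
By the definition of the symbol norm, the right-hand side is at most
\[
(1+|\zeta|)^m\|a\|_{\mathcal L^q_m}\cdot(1+|\zeta|)^n\|b\|_{\mathcal L^r_n}=(1+|\zeta|)^{m+n}\|a\|_{\mathcal L^q_m}\|b\|_{\mathcal L^r_n}.
\]
Dividing both sides by $(1+|\zeta|)^{m+n}$ and taking the supremum over $\zeta\in\R^2\setminus 0$ yields the claim.

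There is no real obstacle here; the only points to check carefully are that the Leibniz expansion contains only finitely many terms (guaranteed by the cutoff $|I|\le 8$ in the definition of $|a|(x,\zeta)$) and that the binomial coefficients are absorbed into the implicit constant. The argument is insensitive to the signs of $m$ and $n$ and to whether any of $p$, $q$, $r$ equal $1$ or $\infty$, since H\"older's inequality holds in all these cases.
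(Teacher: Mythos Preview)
Your proof is correct and follows exactly the approach indicated in the paper, which simply attributes the result to Leibniz's rule and H\"older's inequality. There is nothing to add.
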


Paradifferential operators in $\mathcal L_m^q$ act like differential operators of order $m$ with $L^q$ coefficients.
\begin{lemma}\label{Taf-Lp}
For fixed $m\in\R$, $p,q,r\in[1,\infty]$ with $1/p=1/q+1/r$ we have
\[
\|P_kT_af\|_{L^p}\lesssim 2^{mk^+}\|a\|_{\mathcal L_m^q}\|P_{[k-2,k+2]}f\|_{L^r}.
\]
\end{lemma}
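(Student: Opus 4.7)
The plan is a Schur-type kernel estimate. From the definition (\ref{para-def}), the cutoff $\varphi_{\le-10}(|\xi-\eta|/|\xi+\eta|)$ forces $\eta$ into a small multiplicative neighborhood of $\xi$; in particular, if $\xi\in\supp\varphi_k$ then $\eta\in\supp\varphi_{[k-2,k+2]}$. Hence $P_kT_af=P_kT_aP_{[k-2,k+2]}f$, and it suffices to bound $P_kT_af_k$ where $f_k:=P_{[k-2,k+2]}f$.

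Write $P_kT_af_k(x)=\int K_k(x,y)\,f_k(y)\,dy$. Substituting $\zeta=(\xi+\eta)/2$, $\mu=\xi-\eta$ (so that $x\cdot\xi-y\cdot\eta=(x-y)\cdot\zeta+\tfrac{x+y}{2}\cdot\mu$) into (\ref{para-def}) and performing Fourier inversion in $\mu$, one obtains
\[
K_k(x,y)=\frac{C}{R^2}\sum_{\zeta}\varphi_k(\zeta)\,\tilde a\!\left(\tfrac{x+y}{2},\zeta\right)e^{i(x-y)\cdot\zeta},
\]
where $\tilde a(\cdot,\zeta)$ is a low-frequency truncation of $a(\cdot,\zeta)$ to the scale $\lesssim 2^{k-9}$ coming from the $\varphi_{\le-10}$ cutoff; in particular $\|\tilde a(\cdot,\zeta)\|_{L^q}\lesssim\|a(\cdot,\zeta)\|_{L^q}$. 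Next, eight integrations by parts in $\zeta$ via $(x-y)^I e^{i(x-y)\cdot\zeta}=(-i)^{|I|}\partial_\zeta^I e^{i(x-y)\cdot\zeta}$---each $\zeta$-derivative gaining at most a factor $|\zeta|^{-1}\sim 2^{-k}$, as encoded in the definition of $|a|$---yield the pointwise kernel bound
\[
|K_k(x,y)|\lesssim 2^{2k+mk^+}(1+2^k|x-y|)^{-8}\,A\!\left(\tfrac{x+y}{2}\right),
\]
with $A(z)\lesssim\sup_{|\zeta|\sim 2^k}(1+|\zeta|)^{-m}|a|(z,\zeta)$, so that $\|A\|_{L^q}\lesssim\|a\|_{\mathcal L_m^q}$.

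To finish, change variables $w=x-y$ and apply Minkowski in $w$ together with H\"older (using $1/p=1/q+1/r$):
\[
\|P_kT_af_k\|_{L^p}\le\int\|K_k(\cdot,\cdot-w)\|_{L^q_x}\,\|f_k\|_{L^r}\,dw.
\]
Because $(x+y)/2=x-w/2$ just translates the torus as $x$ varies, $L^q$-translation-invariance converts the kernel bound into $\|K_k(\cdot,\cdot-w)\|_{L^q_x}\lesssim 2^{2k+mk^+}\|a\|_{\mathcal L_m^q}(1+2^k|w|)^{-8}$, and the remaining integral $\int(1+2^k|w|)^{-8}\,dw\lesssim 2^{-2k}$ cancels the $2^{2k}$, producing the asserted bound.

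The one subtle point is that the Weyl symmetrization makes $K_k$ depend on the midpoint $(x+y)/2$ rather than on $x-y$ alone, so a pure Young-type (convolution) bound is unavailable; the change of variables $(x,y)\mapsto(x,w)$ above together with translation-invariance of the $L^q$ norm circumvents this. Derivatives of the cutoff $\varphi_{\le-10}(|\mu|/|2\zeta|)$ during the integration by parts in $\zeta$ each gain a factor $|\zeta|^{-1}$, perfectly matching the weights built into the seminorm $|a|$, so they cause no loss.
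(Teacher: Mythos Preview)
Your argument is the standard Euclidean proof, and in the torus setting it has a genuine gap: the frequencies $\xi,\eta$ live on the lattice $(2\pi\Z/R)^2$, not on $\R^2$. Two concrete problems arise.

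First, the change of variables $(\xi,\eta)\mapsto(\zeta,\mu)=((\xi+\eta)/2,\xi-\eta)$ does not give independent lattice sums: for $\xi,\eta\in L:=(2\pi\Z/R)^2$ one has $\mu\in L$ but $\zeta\in(\pi\Z/R)^2$ subject to the parity constraint $\zeta+\mu/2\in L$. The ``Fourier inversion in $\mu$'' that is supposed to produce $\tilde a((x+y)/2,\zeta)$ therefore does not go through as written; moreover the cutoff $\varphi_k(\xi)=\varphi_k(\zeta+\mu/2)$ fails to factor through the $\mu$-sum.

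Second, and more seriously, ``eight integrations by parts in $\zeta$'' is unavailable: $\zeta$ ranges over a discrete set, so the identity $(x-y)^Ie^{i(x-y)\cdot\zeta}=(-i)^{|I|}\partial_\zeta^Ie^{i(x-y)\cdot\zeta}$ cannot be integrated. One must instead \emph{sum} by parts, replacing $\partial_{\zeta_j}$ by the finite-difference quotient $\Delta_{1/R}^{\zeta_j}$ and replacing $(x-y)_j^{-1}$ by $\bigl(R(e^{2\pi i(x-y)_j/R}-1)\bigr)^{-1}$, which is comparable to $\|x-y\|_j^{-1}$ in the torus distance. This is precisely what the paper does: it keeps the original $(\xi,\eta)$ variables, inserts the physical-space representation $\mathcal F_xa(\xi-\eta,\cdot)=\int e^{-iz\cdot(\xi-\eta)}a(z,\cdot)\,dz$, and sums by parts separately in $\xi$ and in $\eta$ to gain decay in $\|x-z\|$ and $\|z-y\|$. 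The finite differences landing on $a(z,(\xi+\eta)/2)$ are then controlled by the $\zeta$-derivatives built into $|a|$ via the fundamental theorem of calculus; a separate remark handles the borderline case $2^k\approx 1/R$ where no summation by parts is needed. Your midpoint trick and the final H\"older/translation-invariance step would survive once the pointwise kernel bound is established, but obtaining that bound on the torus requires the discrete machinery you have omitted.
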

\begin{proof}
First we assume that $k\ge-\log R-1$; otherwise $P_kf=0$ for any function $f$ on $(\R/R\Z)^2$ because $\supp\varphi_k$ and $(\Z/R)^2$ are disjoint.

The Schwartz kernel for $P_kT_a$ is
\begin{align*}
S(x,y)&=\frac{C'}{R^4}\sum_{\xi,\eta\in(2\pi\Z/R)^2} \int a\left( z,\frac{\xi+\eta}2 \right)e^{i(\xi\cdot(x-z)+\eta\cdot(z-y))}\varphi_{\le-10} \left( \frac{|\xi-\eta|}{|\xi+\eta|} \right)\varphi_k(\xi) dz\\
&=\int \frac{C'}{R^4}\sum_{\xi,\eta\in(2\pi\Z/R)^2} a\left( z,\frac{\xi+\eta}2 \right)\varphi_{\le-10}\left( \frac{|\xi-\eta|}{|\xi+\eta|} \right)\varphi_k(\xi)\\
&\times\frac{\left( \prod_{j=1}^{|I|}\Delta_{1/R}^{\xi_{I_j}} \right)
\left( \prod_{j=1}^{|J|}\Delta_{1/R}^{\eta_{J_j}} \right)
e^{i(\xi\cdot(x-z)+\eta\cdot(z-y))}}{R^{|I|+|J|}
\prod_{j=1}^{|I|}(e^{2\pi i(x_{I_j}-z_{I_j})/R}-1)
\prod_{j=1}^{|J|}(e^{2\pi i(z_{J_j}-y_{J_j})/R}-1)}dz.
\end{align*}
Here
\[
\Delta_h^{\zeta_j} a=\frac{a(x,\zeta+h\mathbf e_j)-a(x,\zeta)}{h}
\]
is the finite difference quotient. By summation by parts in $\xi$ and $\eta$ it follows that
\begin{align}
\nonumber
S(x,y)&=\int \frac{C'}{R^4}\sum_{\xi,\eta\in(2\pi\Z/R)^2}
e^{i(\xi\cdot(x-z)+\eta\cdot(z-y))}\\
\label{diff-quot}
&\times\frac{\left( \prod_{j=1}^{|I|}\Delta_{1/R}^{\xi_{I_j}} \right)
\left( \prod_{j=1}^{|J|}\Delta_{1/R}^{\eta_{J_j}} \right)
\left( a\left( z,\frac{\xi+\eta}2 \right)\varphi_{\le-10}\left( \frac{|\xi-\eta|}{|\xi+\eta|} \right)\varphi_k(\xi) \right)}
{R^{|I|+|J|}\prod_{j=1}^{|I|}(e^{2\pi i(x_{I_j}-z_{I_j})/R}-1)
\prod_{j=1}^{|J|}(e^{2\pi i(z_{J_j}-y_{J_j})/R}-1)}dz.
\end{align}

By the fundamental theorem of calculus, the Leibniz rule, the bounds
$|\nabla^l\varphi_k|$, $\left| \nabla^l\varphi_{\le-10}\left( \frac{|\xi-\eta|}{|\xi+\eta|} \right) \right|\lesssim_l 2^{-lk}$ and the triangle inequality, the difference quotient in (\ref{diff-quot}) can be bounded by $C_{|I|+|J|}$ times
\begin{align*}
g_{I,J}(z,\xi,\eta)&=\varphi_{[k-1,k+1]}\left( \frac{\xi+\eta}{2} \right)
\int_{[0,1]^{|I|+|J|}} \sum_{l=0}^{|I|+|J|} 2^{-(|I|+|J|-l)k}\\
&\times\left| \nabla_\zeta^la\left( z,\frac{\xi+\eta+(\sum_{j=1}^{|I|}t_j\mathbf e_{I_j}+\sum_{j=1}^{|J|} t_j\mathbf e_{J_j})/R}2 \right) \right|dt_1\cdots dt_{|I|+|J|}.
\end{align*}

When $k\ge-\log R+O(1)$, the second argument of $a$ is still $\asymp 2^k$,
it follows from the definition of $|a|$ that for $|I|+|J|\le5$,
\[
g_{I,J}(z,\xi,\eta)\lesssim 2^{-(|I|+|J|)k} \int_{[0,1]^{|I|+|J|}} 
|a|\left( z,\frac{\xi+\eta+(\sum_{j=1}^{|I|}
t_j\mathbf e_{I_j}+\sum_{j=1}^{|J|} t_j\mathbf e_{J_j})/R}2 \right)dt_1\cdots dt_{|I|+|J|}.
\]
Using $|e^{2\pi ix/R}-1|\approx\|x\|/R$ and $\|x-z\|+\|z-y\|\approx\|x-y\|+\|x-z\|$, where $\|x\|=d(x,R\Z)$, we have
\begin{align*}
|S(x,y)|&\lesssim \min_{|I|+|J|\le5} \int R^{-4} \sum_{\xi,\eta\in(2\pi\Z/R)^2\atop|\xi|,|\eta|\approx 2^k} \frac{g_{I,J}(z,\xi,\eta)}
{\prod_{j=1}^{|I|} \|x_{I_j}-z_{I_j}\|\prod_{j=1}^{|J|} \|z_{J_j}-y_{J_j}\|}dz\\
&\lesssim \min_{|I|+|J|\le5} \int R^{-4} \sum_{\xi,\eta\in(2\pi\Z/R)^2\atop|\xi|,|\eta|\approx 2^k} \frac{g_{I,J}(z,\xi,\eta)}
{\prod_{j=1}^{|I|} \|x_{I_j}-y_{I_j}\|\prod_{j=1}^{|J|} \|x_{J_j}-z_{J_j}\|}dz.
\end{align*}
Therefore
\[
|P_kT_af(x)|\lesssim \min_{|I|+|J|\le5} \int \mathcal K(y,z)f(x-y)\tilde g(x-z)dydz,
\]
where
\begin{align*}
\mathcal K(y,z)&=\frac{1}{(1+2^k(\|y\|+\|z\|))^5}, &
\tilde g(z)&=\max_{|I|+|J|\le5} \frac{2^{(|I|+|J|)k}}{R^4}\sum_{\xi,\eta\in(2\pi\Z/R)^2\atop|\xi|,|\eta|\approx 2^k}
g_{I,J}(z,\xi,\eta).
\end{align*}
Now we can pass the $L^p$ norm inside the integral and the sum,
and then apply H\"older's inequality. From the bounds
$\|\mathcal K\|_{L^1}\lesssim 2^{-4k}$ and $\|\tilde g\|_{L^q}\lesssim 2^{4k+mk^+}\|a\|_{\mathcal L_m^q}$ follows the lemma with $f$ in place of $P_{[k-2,k+2]}f$.

When $k=-\log R+O(1)$, we can take $|I|=|J|=0$, in which case
$g_{I,J}(z,\xi,\eta)=|a(z,\frac{\xi+\eta}2)|\le|a|(z,\frac{\xi+\eta}2)$.
The same conclusion follows from the bound $\|\tilde g_{I,J}\|_{L^q}\lesssim R^{-4}\|a\|_{\mathcal L_m^q}$

To show the lemma itself, note in the definition (\ref{para-def}), $|\xi|/2<|\eta|<2|\xi|$, so $\varphi_k(\xi)>0$ implies $\varphi_{[k-2,k+2]}(\eta)=1$, and hence $P_kT_af=P_kT_aP_{[k-2,k+2]}f$.
\end{proof}

Paradifferential operators extract the ``quasilinear'' part of products,
leaving ``semilinear'' remainders.
\begin{definition}\label{H-def}
Given two functions $f$ and $g$, define
\[
H(f,g)=fg-T_fg-T_gf.
\]
\end{definition}

By Lemma \ref{para2diff} (iv) and (v), $P_kH(f,g)=P_kH(P_{>k-20}f,P_{>k-20}g)$. By Lemma \ref{Taf-Lp} and Lemma \ref{Lmq=Lq} we then get
\begin{lemma}\label{PkH-Lp}
For fixed $p,q,r\in[1,\infty]$ with $1/p=1/q+1/r$ we have
\[
\|P_kH(f,g)\|_{L^p}\lesssim \|P_{>k-20}f\|_{L^q}\|P_{>k-20}g\|_{L^r}.
\]
\end{lemma}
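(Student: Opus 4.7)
The plan is to first establish the identity $P_k H(f,g) = P_k H(P_{>k-20}f,\, P_{>k-20}g)$ flagged in the hint, and then bound the right-hand side by $\|P_{>k-20}f\|_{L^q}\|P_{>k-20}g\|_{L^r}$ via H\"older's inequality together with Lemma~\ref{Taf-Lp}.

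For the identity, I would decompose $f = f_l + f_h$ and $g = g_l + g_h$ with $f_l = P_{\le k-20}f$, $f_h = P_{>k-20}f$, and similarly for $g$, then expand $P_k(fg)$, $P_k T_f g$, and $P_k T_g f$ using bilinearity in the paraproduct symbol. Lemma~\ref{para2diff} (iv) immediately kills $P_k T_{f_l}g_l$, $P_k T_{f_h}g_l$, $P_k T_{g_l}f_l$, and $P_k T_{g_h}f_l$, since in each case the input to the paraproduct lies below frequency $2^{k-2}$. Fourier support considerations also give $P_k(f_l g_l) = 0$, as $\widehat{f_l g_l}$ is supported in a ball of radius $\lesssim 2^{k-19}$, disjoint from $\supp \varphi_k$. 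The remaining low-high interactions cancel via Lemma~\ref{para2diff} (v): applied to the symbol $f_l$ it gives $P_k T_{f_l}g = P_k(f_l\cdot g)$, which after discarding the vanishing $P_k(f_l g_l)$ contribution equals $P_k(f_l g_h)$, matching exactly the corresponding term in the expansion of $P_k(fg)$ so that the two cancel in $H$; the pair $(g_l, f_h)$ is handled symmetrically. What survives is precisely $P_k(f_h g_h) - P_k T_{f_h}g_h - P_k T_{g_h}f_h = P_k H(f_h, g_h)$.

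Given the identity, the estimate follows immediately. The product term obeys $\|P_k(f_h g_h)\|_{L^p} \lesssim \|f_h\|_{L^q}\|g_h\|_{L^r}$ by boundedness of $P_k$ on $L^p$ and H\"older. For each paraproduct term I would apply Lemma~\ref{Taf-Lp} with $m = 0$, combined with Lemma~\ref{Lmq=Lq} to identify $\|f_h\|_{\mathcal L_0^q}$ with $\|f_h\|_{L^q}$; the residual frequency localization $P_{[k-2,k+2]}g_h$ is absorbed harmlessly into $\|g_h\|_{L^r}$.

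No step is genuinely difficult. The only place requiring care is the cutoff bookkeeping: one must verify that the shift $k-20$ in Lemma~\ref{para2diff} (v) and the shift $k-2$ in (iv) are both compatible with the choice $P_{>k-20}$ appearing in the lemma statement, which is precisely why the threshold $k-20$ rather than something tighter is fixed in the statement.
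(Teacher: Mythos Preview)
Your proposal is correct and follows exactly the same approach as the paper: first use Lemma~\ref{para2diff} (iv) and (v) to establish the identity $P_kH(f,g)=P_kH(P_{>k-20}f,P_{>k-20}g)$, then apply Lemma~\ref{Taf-Lp} (with $m=0$) together with Lemma~\ref{Lmq=Lq} and H\"older for the product term. The paper merely cites these lemmas without spelling out the cancellation, whereas you have correctly unpacked the low/high decomposition and the role of each part of Lemma~\ref{para2diff}.
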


Next we show the commutator estimates of paradifferential operators.

\begin{definition}
Given symbols $a_1,\dots,a_n$, define the operator
\[
E(a_1,\dots,a_n)=T_{a_1}\cdots T_{a_n}-T_{a_1\cdots a_n}.
\]
\end{definition}

\begin{lemma}\label{Eaf-Lp}
For fixed $m_j\in\R$, $p,q_j,r\in[1,\infty]$ ($j=1,\dots,n$) with $1/p=1/q_1+\cdots+1/q_n+1/r$ we have
\begin{equation}\label{Eaf-Lp-bound}
\|P_kE(a_1,\dots,a_n)f\|_{L^p}
\lesssim 2^{(\sum_{j=1}^n m_j-1)k^+}\prod_{j=1}^n (\|a_j\|_{\mathcal L_{m_j}^{q_j}}+\|\nabla_x a_j\|_{\mathcal L_{m_j}^{q_j}}) \|P_{[k-2n,k+2n]}f\|_{L^r}.
\end{equation}
\end{lemma}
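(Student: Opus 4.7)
The proof is by induction on $n$. The case $n=1$ is trivial since $E(a_1)=0$. For the inductive step, I would use the telescoping identity
\[
E(a_1,\dots,a_n)=T_{a_1}E(a_2,\dots,a_n)+E(a_1, a_2\cdots a_n),
\]
which reduces matters to the base case $n=2$, the inductive hypothesis, Lemma \ref{Taf-Lp} (applied to $T_{a_1}$ to bound the first summand after invoking the inductive hypothesis on $P_{[k-2,k+2]}E(a_2,\dots,a_n)f$), and Lemma \ref{prod-Lm} together with Leibniz's rule (to bound both $\|a_2\cdots a_n\|_{\mathcal L_{m_2+\cdots+m_n}^{q'}}$ and $\|\nabla_x(a_2\cdots a_n)\|_{\mathcal L_{m_2+\cdots+m_n}^{q'}}$ by the product $\prod_{j\ge 2}(\|a_j\|_{\mathcal L_{m_j}^{q_j}}+\|\nabla_x a_j\|_{\mathcal L_{m_j}^{q_j}})$ with $1/q'=1/q_2+\cdots+1/q_n$). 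The frequency range $P_{[k-2n,k+2n]}$ on $f$ arises because each application of either the inductive hypothesis or Lemma \ref{Taf-Lp} widens the Littlewood--Paley support of $f$ by at most $\pm 2$.

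The heart of the argument is the base case $n=2$. Writing $T_{a_1}T_{a_2}$ and $T_{a_1 a_2}$ as bilinear Fourier multipliers on $(2\pi\Z/R)^2\times(2\pi\Z/R)^2$ (in the spirit of the kernel formula used in the proof of Lemma \ref{Taf-Lp}), the former has a symbol involving $\mathcal F_x a_1(\xi-\eta_1,(\xi+\eta_1)/2)\,\mathcal F_x a_2(\eta_1-\eta,(\eta_1+\eta)/2)$ summed over $\eta_1$, while the latter has $\mathcal F_x(a_1a_2)(\xi-\eta,(\xi+\eta)/2)$. Setting $\bar\zeta=(\xi+\eta)/2$, I would Taylor expand each $a_j$ in its frequency variable around $\bar\zeta$: the midpoint arguments $(\xi+\eta_1)/2$ and $(\eta_1+\eta)/2$ differ from $\bar\zeta$ by $(\eta-\eta_1)/2$ and $-(\xi-\eta_1)/2$ respectively. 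The zeroth-order terms reassemble into $T_{a_1a_2}$ via convolution in the frequency variable of the symbol, modulo a harmless mismatch between the cutoffs $\varphi_{\le-10}(|\xi-\eta_1|/|\xi+\eta_1|)\varphi_{\le-10}(|\eta_1-\eta|/|\eta_1+\eta|)$ and $\varphi_{\le-10}(|\xi-\eta|/|\xi+\eta|)$ that is absorbed into a symbol of the same class. The first-order remainders each carry a factor $(\eta-\eta_1)$ or $(\xi-\eta_1)$, which becomes a $\nabla_x a_j$ after Fourier inversion, together with a $\nabla_\zeta$ acting on the other symbol, lowering its order from $m_{3-j}$ to $m_{3-j}-1$. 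Applying the kernel estimate of Lemma \ref{Taf-Lp} to the resulting paradifferential representation of the difference yields the prefactor $2^{(m_1+m_2-1)k^+}$ together with cross terms $\|\nabla_x a_j\|_{\mathcal L_{m_j}^{q_j}}\|a_{3-j}\|_{\mathcal L_{m_{3-j}}^{q_{3-j}}}$, which are dominated by the claimed product.

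The main obstacle is executing the symbol expansion rigorously in the discrete Fourier setting of the torus. As already seen in the proof of Lemma \ref{Taf-Lp}, one cannot literally differentiate in the frequency variable, and Taylor expansion must be replaced by an appropriate use of the finite-difference quotients $\Delta_{1/R}^{\zeta_j}$, with summations by parts in $\xi$, $\eta$, and $\eta_1$ converting these into the intended $\nabla_x a_j$ factors. The smoothness of $\varphi_{\le-10}(|\cdot|/|\cdot|)$ at scale $2^k$ guarantees that the lattice approximations of its $\zeta$-derivatives contribute only admissible $O(2^{-k})$ factors, consistent with the claimed $2^{-k^+}$ gain over the naive composition bound. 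Once this discrete symbol calculus is in place, the remaining steps are bookkeeping along the lines already established for Lemma \ref{Taf-Lp}.
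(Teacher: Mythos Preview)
Your reduction to $n=2$ and the symbol-expansion strategy for the base case match the paper's proof closely; the paper likewise applies Lemmas \ref{prod-Lm} and \ref{Taf-Lp} to induct, and for $n=2$ writes the difference $\hat A$ via the fundamental theorem of calculus in the $\zeta$-variable to produce the $\nabla_\zeta a_i\cdot\nabla_x a_j$ structure, then feeds this into the kernel machinery of Lemma \ref{Taf-Lp}.

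Two small points where the paper is more careful than your sketch. First, rather than calling the cutoff mismatch ``harmless'', the paper splits $a_j=a_j^L+a_j^H$ with $a_j^L=P_{\le k-20}a_j$; the high-frequency pieces $E^H$ are bounded directly using $\|a_j^H\|_{\mathcal L_{m_j}^{q_j}}\lesssim 2^{-k}\|\nabla_x a_j\|_{\mathcal L_{m_j}^{q_j}}$, and on the low-frequency piece $E^L$ the cutoffs $\varphi_{\le-10}$ are identically $1$, so the convolution reassembly is exact. Second, your last paragraph overstates the discreteness obstacle: the symbol $a(x,\zeta)$ is smooth in $\zeta\in\R^2\setminus\{0\}$, so the Taylor/FTC step in $\zeta$ is genuinely continuous (the paper writes it as $\int_0^1\dots dt$); the finite-difference quotients $\Delta_{1/R}^{\zeta_j}$ enter only afterward, when bounding the resulting Schwartz kernel by summation by parts exactly as in Lemma \ref{Taf-Lp}.
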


Roughly speaking, the operator $E(a_1,\dots,a_n)$ is one order smoother than either term on the right, so it can be thought of as an ``error term''.
\begin{proof}
Lemma \ref{prod-Lm} and Lemma \ref{Taf-Lp} allow us to use induction on $n$,
so it suffices to show the case when $n=2$.
If $k\le0$, the result also follows from the two lemmas because $2^{k^+}=1$.

Now we assume $k>0$, decompose $a_j=a_j^L+a_j^H$ ($j=1,2$), where
$a_j^L=P_{\le k-20}a_j$, and put
\begin{align*}
E^L(a_1,a_2)&=E(a_1^L,a_2^L),\\
E^H(a_1,a_2)&=E(a_1,a_2)-E^L(a_1,a_2)=E(a_1^H,a_2)+E(a_1^L,a_2^H).
\end{align*}

For $E^H(a_1,a_2)$, we use Lemma \ref{prod-Lm} and Lemma \ref{Taf-Lp} to get
\begin{align*}
\|P_kE(a_1^H,a_2)f\|_{L^p}
&\lesssim 2^{(m_1+m_2)k} \|a_1^H\|_{\mathcal L_{m_1}^{q_1}}\|a_2\|_{\mathcal L_{m_2}^{q_2}}\|P_{[k-4,k+4]}f\|_{L^r}\\
&\lesssim 2^{(m_1+m_2-1)k}(\|a_1\|_{\mathcal L_{m_1}^{q_1}}+\|\nabla_x a_1\|_{\mathcal L_{m_1}^{q_1}})\|a_2\|_{\mathcal L_{m_2}^{q_2}} \|P_{[k-4,k+4]}f\|_{L^r}.
\end{align*}
A similar bound, with $\nabla_x$ hitting $a_2$, holds for $P_kE(a_1^L,a_2^H)f$.

For $E^L(a_1,a_2)$, since $a_j^L=P_{\le k-20}a_j$, we have
\[
\mathcal F(P_kE(a_1^L,a_2^L)f)(\xi)
=\frac{C^2}{R^4}\sum_{\eta,\zeta\in(2\pi\Z/R)^2} \hat A(\xi,\eta,\zeta)\hat f(\zeta),
\]
where
\begin{align*}
\hat A&=\mathcal F_xa_1^L\left( \xi-\eta,\frac{\xi+\eta}2 \right)
\mathcal F_xa_2^L\left( \eta-\zeta,\frac{\eta+\zeta}2 \right)-
\mathcal F_xa_1^L\left( \xi-\eta,\frac{\xi+\zeta}2 \right)
\mathcal F_xa_2^L\left( \eta-\zeta,\frac{\xi+\zeta}2 \right)\\
&=\hat A_1(\xi,\eta,\zeta)-\hat A_2(\xi,\eta,\zeta),\\
\hat A_1&=\int_0^1 (\nabla_\zeta\mathcal F_xa_1^L)\left( \xi-\eta,\frac{\xi+\zeta+t(\eta-\zeta)}2 \right)\cdot\frac{\eta-\zeta}2\mathcal F_xa_2^L\left( \eta-\zeta,\frac{\xi+\zeta+t(\eta-\xi)}2 \right)dt\\
&=\frac1{2i}\int_0^1 \mathcal F_x(\nabla_\zeta a_1^L)\left( \xi-\eta,\frac{\xi+\zeta+t(\eta-\zeta)}2 \right)\cdot\mathcal F_x(\nabla_xa_2^L)\left( \eta-\zeta,\frac{\xi+\zeta+t(\eta-\xi)}2 \right)dt,\\
\hat A_2&=\int_0^1\mathcal F_xa_1^L\left( \xi-\eta,\frac{\xi+\zeta+t(\eta-\zeta)}2 \right)\frac{\xi-\eta}2\cdot(\nabla_\zeta\mathcal F_xa_2^L)\left( \eta-\zeta,\frac{\xi+\zeta+t(\eta-\xi)}2 \right)dt\\
&=\frac1{2i}\int_0^1 \mathcal F_x(\nabla_x a_1^L)\left( \xi-\eta,\frac{\xi+\zeta+t(\eta-\zeta)}2 \right)\cdot\mathcal F_x(\nabla_\zeta a_2^L)\left( \eta-\zeta,\frac{\xi+\zeta+t(\eta-\xi)}2 \right)dt.
\end{align*}
Computing in the same way as (\ref{diff-quot}), the Schwartz kernel for $P_kE^L(a_1,a_2)$ is then
\begin{align*}
S(x,y)%&=\frac{C''}{R^6}\sum_{\xi,\eta,\zeta\in(2\pi\Z/R)^2} \int A(z,z',\xi,\eta,\zeta)e^{i(\xi\cdot(x-z)+\eta\cdot(z-z')+\zeta\cdot(z'-y))}\varphi_k(\xi) dzdz'\\
&=\iint \frac{C''}{R^6}\sum_{\xi,\eta,\zeta\in(2\pi\Z/R)^2}
e^{i(\xi\cdot(x-z)+\eta\cdot(z-z')+\zeta\cdot(z'-y))}\\
&\times\frac{\left( \prod_{j=1}^{|I|} \Delta_{1/R}^{\xi_{I_j}} \right)
\left( \prod_{j=1}^{|J|} \Delta_{1/R}^{\eta_{J_j}} \right)
\left( \prod_{j=1}^{|K|} \Delta_{1/R}^{\zeta_{K_j}} \right)
\left( A(z,z',\xi,\eta,\zeta)\varphi_k(\xi) \right)dzdz'}
{R^{|I|+|J|+|K|}\prod_{j=1}^{|I|}(e^{2\pi i(x_{I_j}-z_{I_j})/R}-1)
\prod_{j=1}^{|J|}(e^{2\pi i(z_{J_j}-z'_{J_j})/R}-1)
\prod_{j=1}^{|K|}(e^{2\pi i(z'_{K_j}-y_{K_j})/R}-1)},
\end{align*}
where $A=A_1+A_2$,
\begin{align*}
A_1&=\frac1{2i}\int_0^1 (\nabla_\zeta a_1^L)\left( z,\frac{\xi+\zeta+t(\eta-\zeta)}2 \right)\cdot(\nabla_xa_2^L)\left( z',\frac{\xi+\zeta+t(\eta-\xi)}2 \right)dt,\\
A_2&=\frac1{2i}\int_0^1 (\nabla_xa_1^L)\left( z,\frac{\xi+\zeta+t(\eta-\zeta)}2 \right)\cdot(\nabla_\zeta a_2^L)\left( z',\frac{\xi+\zeta+t(\eta-\xi)}2 \right)dt.
\end{align*}

Note that this expression is similar to (\ref{diff-quot}),
with $\nabla_\zeta a_i^L\nabla_xa_j^L$ ($\{i,j\}=\{1,2\}$) replacing $a$.
Then the same argument as in the proof of Lemma \ref{Taf-Lp} shows that
\[
|P_kE(a_1^L,a_2^L)f(x)|\lesssim \min_{|I|+|J|+|K|\le7} \int \mathcal K(y,z,z')f(x-y)\tilde g(x-z,x-z')dydzdz',
\]
where
\begin{align*}
\mathcal K(y,z,z')&=\frac{1}{(1+2^k(\|y\|+\|z\|+\|z'\|))^7},\\
|\tilde g(z,z')|&\lesssim 2^{6k}\max_{l+l'\le7\atop \{i,j\}=\{1,2\}}
\sup_{|\zeta|,|\zeta'|\approx 2^k} |\zeta|^l|\nabla_\zeta^{l+1}a_i^L(z,\zeta)||\zeta'|^{l'}|\nabla_\zeta^l\nabla_xa_j^L(z',\zeta')|.
\end{align*}
Now we can pass the $L^p$ norm inside the integral and the sum,
and then apply H\"older's inequality. We have
$\|\mathcal K\|_{L^1}\lesssim 2^{-6k}$ and $\|\tilde g(x-z,x-z')\|_{L_x^{q_1q_2/(q_1+q_2)}}\lesssim 2^{(5+m_1+m_2)k}C$,
where $C$ denotes the product over $j$ on the right-hand side of (\ref{Eaf-Lp-bound}). Then it follows that
\[
\|P_kE^L(a_1,a_2)f\|_{L^p}\lesssim 2^{(m_1+m_2-1)k}C\|P_{[k-4,k+4]}f\|_{L^r},
\]
where we have replaced $f$ with $P_{[k-4,k+4]}f$ as before.
Combining this with the bound for $P_kE^H(a_1,a_2)f$ we get the lemma.
%for $|I|+|J|+|K|\le7$, the difference quotient on the right-hand side can be bounded by $C_{|I|+|J|+|K|}g_{I,J,K}$, where
%\begin{align*}
%g_{I,J,K}(z,z',\xi,\eta,\zeta)&=\varphi_{[k-1,k+1]}(\xi)\varphi_{[k-1,k+1]}(\eta)\varphi_{[k-1,k+1]}(\zeta)\int_{[0,1]^{|I|+|J|+|K|}} \sum_{l=0}^{|I|+|J|+|K|} 2^{-(|I|+|J|+|K|-l)k}\\
%&\times\left| \nabla_\zeta^lA\left( z,z',\xi+\frac{\sum_{j=1}^{|I|}te_{I_j}}{R},\eta+\frac{\sum_{j=1}^{|J|}te_{J_j}}{R},\zeta+\frac{\sum_{j=1}^{|K|}te_{K_j}}{R} \right) \right|dt_1\cdots dt_{|I|+|J|+|K|}\\
%&\lesssim 2^{-(|I|+|J|+|K|+1)k}\int_{[0,1]^{|I|+|J|+|K|+1}} (|a^L|+|\nabla_xa^L|)(z,\eta_{t,t_1,\dots,t_{|I|+|J|+|K|}})dtdt_1\cdots dt_{|I|+|J|+|K|}\\
%&+2^{-(|I|+|J|+|K|+1)k}\int_{[0,1]^{|I|+|J|+|K|+1}} (|a^L|+|\nabla_xa^L|)(z,\zeta_{t,t_1,\dots,t_{|I|+|J|+|K|}})dtdt_1\cdots dt_{|I|+|J|+|K|},
%\end{align*}
\end{proof}

\subsection{Multilinear paraproduct estimates}
We also need to bound multilinear paraproducts on the torus.
We will only use multipliers that are restrictions of Schwartz functions to the lattice in the frequency space.
\begin{definition}
If $m$ is a Schwartz function on $(\R^2)^n$, define
\begin{align*}
\|m\|_{S^\infty}&=\|\mathcal Fm\|_{L^1},\\
\|m\|_{S^\infty_{k_1,\cdots,k_n;k}}&=\|\varphi_k(\xi_1+\cdots+\xi_n)m(\xi_1,\cdots,\xi_n)\varphi_{k_1}(\xi_1)\cdots\varphi_{k_n}(\xi_n)\|_{S^\infty}.
\end{align*}
\end{definition}
\begin{definition}
Throughout the paper we let $K$ be the largest of $k$, $k_1,\dots,k_n$.
\end{definition}

The next lemma allows us to estimate the $S^\infty$ norm of various symbols.
\begin{lemma}\label{Soo-Cn}
(i) $\|m_1m_2\|_{S^\infty}\le\|m_1\|_{S^\infty}\|m_2\|_{S^\infty}$.

(ii) For $k_1,\dots,k_n,k\in\Z$ we have
\[
\|m\cdot\otimes_{j=1}^n\varphi_{k_j}\|_{S^\infty}
\lesssim_n \sum_{l=0}^{n+1} \sum_{j=0}^n 2^{lk_j}\|
\varphi_{[k_j-1,k_j+1]}\nabla_j^lm\|_{L^\infty}.
\]

(iii)
\[
\|m\|_{S^\infty((\R/\Z)^{2n})}\le\|m\|_{S^\infty(\R^{2n})},
\]
that is, we can bound the $S^\infty$ norm of a multiplier restricted to the frequency lattice $(2\pi\Z)^{2n}$ by the $S^\infty$ norm of the multiplier itself. By scaling symmetry, this also applies to multipliers restricted to a rescaled lattice.
\end{lemma}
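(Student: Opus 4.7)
The plan is to handle (i) and (iii) directly, and then concentrate on (ii), which is the substantive Bernstein-type estimate.

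For (i), I would use that multiplication of multipliers dualizes to convolution of their Fourier transforms, so $\mathcal F(m_1 m_2)$ equals $\mathcal F m_1\ast\mathcal F m_2$ up to the normalizing constant implicit in $\mathcal F$; the bound follows from Young's inequality $\|f\ast g\|_{L^1}\le\|f\|_{L^1}\|g\|_{L^1}$. For (iii), the key tool is periodization: if $F=\mathcal F m$ on $\R^{2n}$, then the function $G$ on $(\R/\Z)^{2n}$ whose Fourier coefficients on the dual lattice $(2\pi\Z)^{2n}$ equal $m$ is $G(x)=\sum_{y\in\Z^{2n}}F(x+y)$ (Poisson summation). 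Integrating over one fundamental domain and invoking the triangle inequality gives $\|G\|_{L^1((\R/\Z)^{2n})}\le\|F\|_{L^1(\R^{2n})}$, which is (iii); the general rescaled case follows by dilating the lattice.

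For (ii), I would set $h=m\cdot\otimes_j\varphi_{k_j}$, which is supported in $\prod_j\{|\xi_j|\approx 2^{k_j}\}$ of total volume $\asymp\prod_j 2^{2k_j}$, and aim to control $\|\mathcal F h\|_{L^1(\R^{2n})}$. The trivial estimate gives $|\mathcal F h(x)|\lesssim\prod_j 2^{2k_j}\|m\|_{L^\infty(\mathrm{supp})}$, while for each $j\in\{1,\dots,n\}$ and each $0\le l\le n+1$, integration by parts $l$ times in $\xi_j$, combined with the Leibniz rule and the scaling $|\nabla_{\xi_j}^{l-l'}\varphi_{k_j}|\lesssim 2^{-(l-l')k_j}$, produces
\[
|x_j|^l\,|\mathcal F h(x)|\lesssim\prod_i 2^{2k_i}\max_{0\le l'\le l}2^{(l'-l)k_j}\|\varphi_{[k_j-1,k_j+1]}\nabla_{\xi_j}^{l'}m\|_{L^\infty}.
\]
Interpolating this with the trivial bound yields pointwise decay of the form $\prod_i 2^{2k_i}(1+2^{k_j}|x_j|)^{-(n+1)}\cdot 2^{lk_j}\|\varphi_{[k_j-1,k_j+1]}\nabla_{\xi_j}^l m\|_{L^\infty}$ (with lower-order derivative terms absorbed by the maximum). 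Choosing, for each $x\in\R^{2n}$, the index $j$ that maximizes $2^{k_j}|x_j|$, and then performing the $L^1$ integration, cancels the volume factor $\prod_i 2^{2k_i}$ and produces the claimed sum over $(l,j)$.

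The main technical obstacle is the bookkeeping in step (ii): verifying that a derivative budget concentrated in a single variable balances the $2n$-dimensional integration. A perhaps cleaner alternative routes through Plancherel: bound $\|\mathcal F h\|_{L^1}$ by Cauchy--Schwarz against $\langle x\rangle^{-a}\in L^2(\R^{2n})$ (requiring $a>n$), then convert $\|\langle x\rangle^a\mathcal F h\|_{L^2}$ into a Sobolev-type norm of $h$ and estimate each mixed derivative of $h$ via Leibniz together with the annular scaling of the $\varphi_{k_j}$'s. Either route recovers the stated bound, with the sum over $(l,j)$ reflecting the admissible patterns in which the derivative budget can be distributed among the $n$ variables.
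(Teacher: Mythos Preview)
Your arguments for (i) and (iii) are correct and coincide with the paper's: (i) is Young's inequality for convolutions, and (iii) is exactly the Poisson summation plus triangle inequality that the paper invokes.

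For (ii) the paper does not give a self-contained argument but simply cites \cite{DeIoPa}, Lemma~3.3. Your direct attempt has a genuine dimensional gap. In the first route, integrating by parts up to $n+1$ times in a \emph{single} block variable $\xi_j$ and then choosing, for each $x$, the index $j$ maximizing $2^{k_j}|x_j|$ yields at best
\[
|\mathcal F h(x)|\lesssim \Big(\prod_i 2^{2k_i}\Big)\,A\,\big(1+\max_j 2^{k_j}|x_j|\big)^{-(n+1)}.
\]
After the rescaling $y_j=2^{k_j}x_j$ the volume factor does cancel, but what remains is $\int_{\R^{2n}}(1+|y|_\infty)^{-(n+1)}\,dy$, which \emph{diverges} for every $n\ge 1$ because the decay exponent $n+1$ never exceeds the ambient dimension $2n$. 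So the $L^1$ integration does not close, contrary to what your sketch asserts. Your alternative route via Cauchy--Schwarz and Plancherel does close dimensionally, but it requires control of \emph{mixed} derivatives $\partial^\beta m$ with the multi-index $\beta$ spread across several $\xi_j$'s; the right-hand side of (ii), however, involves only $\nabla_j^l m$ for one index $j$ at a time. That route therefore proves a strictly weaker inequality than the one stated. A proof of (ii) as written needs the more careful argument in \cite{DeIoPa}, not either of the sketches here.
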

\begin{proof}
(i) follows directly from the definition. (ii) is \cite{DeIoPa}, Lemma 3.3.
(iii) follows from the definition, the Poisson summation formula
\[
\mathcal F_{(2\pi\Z)^{2n}\to(\R/\Z)^{2n}}(m)(x)
=\sum_{z\in\Z^{2n}}\mathcal F_{\R^{2n}\to\R^{2n}}m(x+z)
\]
and the triangle inequality.
\end{proof}

The $L^p$ boundedness of a paraproduct of functions is well known.
\begin{lemma}\label{paraprod}
Fix $p,p_j\in[1,\infty]$ ($j=1,\dots,n$) and $1/p=1/p_1+\cdots+1/p_n$. Let
\[
\mathcal Ff(\xi)
=\frac{1}{R^{2n-2}}\sum_{\xi_j\in(2\pi\Z/R)^2\atop\xi_1+\dots+\xi_n=\xi} m(\xi_1,\dots,\xi_n)\prod_{j=1}^n \mathcal F f_j(\xi_j).
\]
Then
\[
\|f\|_{L^p}\lesssim_n \|m\|_{S^\infty}\prod_{j=1}^n\|f_j\|_{L^{p_j}},
\]
where the $L^p$ norms are taken on $(\R/R\Z)^2$.
\end{lemma}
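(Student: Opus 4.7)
My plan is to reduce the bound to the standard Euclidean paraproduct estimate via Fourier inversion on the symbol $m$. The key observation is that writing $m$ as a superposition of exponentials $e^{i\sum \xi_j \cdot y_j}$ converts the frequency-space multiplication into a spatial translation of each factor $f_j$ by the shift $y_j$. Since translations preserve $L^{p_j}$ norms on the torus, all the analytic weight ends up on $\widehat m \in L^1$, which is exactly $\|m\|_{S^\infty}$.

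Concretely, I would first apply Fourier inversion on the torus to $f$:
\[
f(x)=\frac{1}{R^{2n}}\sum_{\xi_1,\dots,\xi_n\in(2\pi\Z/R)^2} m(\xi_1,\dots,\xi_n)\Big(\prod_{j=1}^n\mathcal Ff_j(\xi_j)\Big)\,e^{i(\xi_1+\cdots+\xi_n)\cdot x},
\]
where the normalization $1/R^{2n-2}$ in the definition combines with the $1/R^2$ from the Fourier inversion on $(\R/R\Z)^2$. Next I would write the Schwartz symbol as $m(\xi_1,\dots,\xi_n)=c\int_{\R^{2n}}\mathcal Fm(y_1,\dots,y_n)\,e^{i(\xi_1\cdot y_1+\cdots+\xi_n\cdot y_n)}dy$ and interchange the order of summation and integration (justified by Schwartz decay of $\mathcal Fm$). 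The resulting inner sums factorize, each giving a translated $f_j$:
\[
f(x)=c\int_{\R^{2n}} \mathcal Fm(y_1,\dots,y_n)\prod_{j=1}^n f_j(x+y_j)\,dy_1\cdots dy_n.
\]

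Finally I would apply Minkowski's integral inequality to pull the $L^p_x$ norm inside the integral, then Hölder's inequality with the exponents $1/p=\sum 1/p_j$, and use the fact that translations on $(\R/R\Z)^2$ preserve $L^{p_j}$ to conclude
\[
\|f\|_{L^p}\lesssim \int_{\R^{2n}} |\mathcal Fm(y)|\prod_{j=1}^n \|f_j\|_{L^{p_j}}\,dy=\|\mathcal Fm\|_{L^1}\prod_{j=1}^n\|f_j\|_{L^{p_j}},
\]
which is the desired estimate.

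I do not anticipate a serious obstacle: the main point is simply bookkeeping of the Fourier normalizations between the lattice $(2\pi\Z/R)^2$ and $\R^2$. The trick of trading the lattice-valued multiplier for the Euclidean one via $\widehat m$ (instead of trying to define a Fourier transform directly on the lattice side) is precisely what Lemma \ref{Soo-Cn}(iii) anticipates, and it lets the argument proceed entirely $R$-independently.
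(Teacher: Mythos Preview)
Your proposal is correct and is essentially the paper's own argument, just written out in full: the paper's proof is the single line ``The Schwartz kernel of the operator is $\mathcal Fm(x-x_1,\dots,x-x_n)$,'' which is precisely the representation $f(x)=c\int_{\R^{2n}}\mathcal Fm(y)\prod_j f_j(x+y_j)\,dy$ you derive, after which Minkowski plus H\"older is the intended (and only) remaining step.
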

\begin{proof}
The Schwartz kernel of the operator is $\mathcal Fm(x-x_1,\dots,x-x_n)$.
\end{proof}

\section{Quartic energy estimates}\label{EneEst}
In this section we will obtain a quartic energy estimate of the form
\[
\mathcal E(t)=\mathcal E(0)+\int_0^t \mathcal E(s)\|U(s)\|_X^2ds.
\]

\subsection{Defining the quartic energy}
\begin{definition}
For an integer $N\ge2$ define
\begin{align*}
\mathcal U&=X+iT_{\sqrt{1+\rho}}Y, & \mathcal E&=\|P_{\ge0}\mathcal U\|_{H^N}^2=\|P_{\ge0}\Lambda^N\mathcal U\|_{L^2}^2.
\end{align*}
\end{definition}

We first show that $\mathcal E$ is close to the usual $H^N$ norm, up to a cubic error.
\begin{proposition}\label{E-HN}
If $\|U\|_{H^2}$ is sufficiently small then
\[
|\mathcal E-\|P_{\ge0}U\|_{H^N}^2|\lesssim \|U\|_{H^N}^3.
\]
\end{proposition}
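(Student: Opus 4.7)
The plan is to isolate the perturbation $\mathcal U - U = iT_{\sqrt{1+\rho}-1}Y$, bound its $H^N$-norm by $\|U\|_{H^2}\|U\|_{H^N}$ using the paradifferential estimates developed in Section \ref{LinEst}, and then exploit the algebraic identity
\[
\|a\|_{L^2}^2 - \|b\|_{L^2}^2 = \|a-b\|_{L^2}^2 + 2\,\mathrm{Re}\langle a-b, b\rangle_{L^2}
\]
applied to $a = P_{\ge 0}\Lambda^N \mathcal U$ and $b = P_{\ge 0}\Lambda^N U$.

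First I would bound the symbol. Since $\rho = \Lambda^{-1}|\nabla|X$ is a Calder\'on--Zygmund-type multiplier applied to $X$ (with $\widehat X(0)=0$), Lemma \ref{XZ-bound}(v) gives $\|\rho\|_{H^2}\lesssim\|X\|_{H^2}\lesssim\|U\|_{H^2}$, and the 2D Sobolev embedding $H^2\hookrightarrow L^\infty$ then yields $\|\rho\|_{L^\infty}\lesssim\|U\|_{H^2}$. When $\|U\|_{H^2}$ is small enough, the smooth function $s\mapsto\sqrt{1+s}-1$ is Lipschitz near $0$, so
\[
\|\sqrt{1+\rho}-1\|_{L^\infty}\lesssim\|\rho\|_{L^\infty}\lesssim\|U\|_{H^2}.
\]
By Lemma \ref{Lmq=Lq} this coincides with the $\mathcal L_0^\infty$-norm of the (position-only) symbol.

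Next I would apply Lemma \ref{Taf-Lp} with $m=0$, $p=r=2$, $q=\infty$ frequency-by-frequency to obtain
\[
\|P_k T_{\sqrt{1+\rho}-1}Y\|_{L^2}\lesssim\|\sqrt{1+\rho}-1\|_{L^\infty}\,\|P_{[k-2,k+2]}Y\|_{L^2}.
\]
Since $\Lambda^N P_k\simeq 2^{Nk^+}P_k$ on $L^2$, multiplying by $2^{Nk^+}$, squaring and taking the $\ell^2$ sum over $k\ge 0$ produces
\[
\|P_{\ge 0}\Lambda^N(\mathcal U - U)\|_{L^2}\lesssim\|U\|_{H^2}\|Y\|_{H^N}\lesssim\|U\|_{H^2}\|U\|_{H^N}.
\]

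Putting the pieces together,
\[
\bigl|\mathcal E-\|P_{\ge 0}U\|_{H^N}^2\bigr|\le\|a-b\|_{L^2}^2+2\|a-b\|_{L^2}\|b\|_{L^2}\lesssim\|U\|_{H^2}^2\|U\|_{H^N}^2+\|U\|_{H^2}\|U\|_{H^N}^2,
\]
and both terms are controlled by $\|U\|_{H^N}^3$ using $\|U\|_{H^2}\lesssim\|U\|_{H^N}$ (the quartic term being further absorbed by the smallness of $\|U\|_{H^N}$). The only delicate step is the paradifferential bound on $P_{\ge 0}\Lambda^N T_{\sqrt{1+\rho}-1}Y$, but this is a direct application of Lemma \ref{Taf-Lp}; the rest is bookkeeping.
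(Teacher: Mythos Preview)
Your proof is correct and follows essentially the same approach as the paper: both identify $\mathcal U-U=iT_{\sqrt{1+\rho}-1}Y$, invoke Lemma~\ref{Taf-Lp} together with Sobolev embedding to bound $\|P_{\ge0}(\mathcal U-U)\|_{H^N}\lesssim\|\rho\|_{L^\infty}\|U\|_{H^N}\lesssim\|U\|_{H^N}^2$, and then read off the quadratic comparison. Your write-up is simply more explicit about the algebraic identity $\|a\|^2-\|b\|^2=\|a-b\|^2+2\,\mathrm{Re}\langle a-b,b\rangle$, whereas the paper absorbs this into the phrase ``combining the two bounds.''
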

\begin{proof}
From Lemma \ref{Taf-Lp} and Sobolev embedding it follows that
\[
\|P_{\ge0}(\mathcal U-U)\|_{H^N}
=\|P_{\ge0}T_{\sqrt{1+\rho}-1}Y\|_{H^N}\lesssim \|\rho\|_{L^\infty}\|U\|_{H^N}\lesssim \|U\|_{H^N}^2.
\]
If $\|U\|_{H^2}$ is sufficiently small, the above also gives $\|P_{\ge0}\mathcal U\|_{H^N}\lesssim \|U\|_{H^N}$. Combining the two bounds shows the claim.
\end{proof}

The rest of this section is devoted to estimating $d\mathcal E/dt$.
To begin with, the evolution equation for $\mathcal U$ is
\begin{align*}
\partial_t\mathcal U&=\Lambda(Y-iT_{\sqrt{1+\rho}}X)-\Lambda R_j(T_\rho v_j+T_{v_j}\rho+H(\rho,v_j))\\
&-iT_{\sqrt{1+\rho}}|\nabla|(T_{v_j}v_j+H(v_j,v_j)/2)+iT_{\partial_t\sqrt{1+\rho}}Y.
\end{align*}
Using the definition of $E$, Lemma \ref{para2diff} (ii) and $\zeta_j\zeta_j=|\zeta|^2$ we get
\begin{align*}
(\partial_t+iT_{\sqrt{1+\rho}\Lambda(\zeta)})\mathcal U
&=-iE(\Lambda(\zeta),T_{\sqrt{1+\rho}})X-E(\sqrt{1+\rho}\Lambda(\zeta),\sqrt{1+\rho})Y\\
&+E(\Lambda(\zeta)\zeta_j/|\zeta|,\rho,\zeta_j/|\zeta|)Y-\Lambda R_jT_{v_j}\rho-iT_{\sqrt{1+\rho}}|\nabla|T_{v_j}v_j\\
&-\Lambda R_jH(\rho,v_j)-iT_{\sqrt{1+\rho}}|\nabla|H(v_j,v_j)/2
+iT_{\partial_t\sqrt{1+\rho}}Y.
\end{align*}
Using $E(a,1)=E(1,b)=0$, $[T_a,T_b]=E(a,b)-E(b,a)$ and the bilinearity of $E$ we get
\begin{equation}\label{cal-Ut}
(\partial_t+iT_{\sqrt{1+\rho}\Lambda(\zeta)}+iT_{v\cdot\zeta})\mathcal U=\mathcal{Q+S+C},
\end{equation}
where
\begin{align*}
\mathcal Q&=-iE(\Lambda(\zeta),T_{\sqrt{1+\rho}})X-E(\Lambda(\zeta),\sqrt{1+\rho}-1)Y+E(\Lambda(\zeta)\zeta_j/|\zeta|,\rho,\zeta_j/|\zeta|)Y\\
&-iE(\Lambda(\zeta)\zeta_j/|\zeta|,v_j,|\zeta|/\Lambda(\zeta))X+E(|\zeta|,v_j,\zeta_j/|\zeta|)Y+iT_{|\nabla|Y/2}Y,\\
\mathcal S&=-\Lambda R_jH(\rho,v_j)-i|\nabla|H(v_j,v_j)/2,\\
\mathcal C&=-E((\sqrt{1+\rho}-1)\Lambda(\zeta),\sqrt{1+\rho}-1)Y+T_{\sqrt{1+\rho}-1}E(|\zeta|,v_j,\zeta_j/|\zeta|)Y\\
&+[iT_{v\cdot\zeta},T_{\sqrt{1+\rho}-1}]Y-iT_{\sqrt{1+\rho}-1}|\nabla|H(v_j,v_j)/2+iT_{\partial_t\sqrt{1+\rho}-|\nabla|Y/2}Y
\end{align*}
are quasilinear quadratic, semilinear quadratic and cubic terms, respectively.

By Lemma \ref{para2diff} (i), $T_{\sqrt{1+\rho}\Lambda(\zeta)+v\cdot\zeta}$ is self-adjoint,
so $\langle T_{\sqrt{1+\rho}\Lambda(\zeta)+v\cdot\zeta}f,f \rangle\in\R$,
where the inner product is taken on the torus.
Now we can decompose $d\mathcal E/dt$ accordingly:
\[
\frac{d}{dt}\mathcal E
=2\Re\langle (\partial_t+iT_{\sqrt{1+\rho}\Lambda(\zeta)}+iT_{v\cdot\zeta})P_{\ge0}\Lambda^N\mathcal U,
P_{\ge0}\Lambda^N\mathcal U \rangle
=2(\mathcal E_Q+\mathcal E_S+\mathcal E_4),
\]
where (note that $[T_{\Lambda(\zeta)},P_{\ge0}\Lambda^N]=[\Lambda,P_{\ge0}\Lambda^N]=0$)
\begin{align*}
\mathcal E_Q&=\Re\langle[iT_{(\sqrt{1+\rho}-1)\Lambda(\zeta)+v\cdot\zeta},P_{\ge0}\Lambda^N]\mathcal U
+P_{\ge0}\Lambda^N\mathcal Q,P_{\ge0}\Lambda^N\mathcal U\rangle,\\
\mathcal E_S&=\Re\langle P_{\ge0}\Lambda^N\mathcal S,P_{\ge0}\Lambda^NU \rangle,\\
\mathcal E_4&=\Re\langle P_{\ge0}\Lambda^N\mathcal S,iP_{\ge0}\Lambda^NT_{\sqrt{1+\rho}-1}Y \rangle+\Re\langle P_{\ge0}\Lambda^N\mathcal C,P_{\ge0}\Lambda^N\mathcal U\rangle
\end{align*}
are quasilinear cubic, semilinear cubic and quartic energies, respectively.

\subsection{Bounding the quartic energy}
\begin{proposition}\label{E4}
If $\|U\|_{H^2}$ is sufficiently small then
\[
|\mathcal E_4|\lesssim \|U\|_X^2\|U\|_{H^N}^2.
\]
\end{proposition}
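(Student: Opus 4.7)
Apply Cauchy--Schwarz to each of the two inner products in $\mathcal E_4$. Two of the four resulting factors are easy: $\|P_{\ge0}\Lambda^N\mathcal U\|_{L^2}=\sqrt{\mathcal E}\lesssim\|U\|_{H^N}$ by Proposition \ref{E-HN}, and $\|P_{\ge0}\Lambda^NT_{\sqrt{1+\rho}-1}Y\|_{L^2}\lesssim\|U\|_X\|U\|_{H^N}$ by Lemma \ref{Taf-Lp} combined with Lemma \ref{Lmq=Lq}, since $\|\sqrt{1+\rho}-1\|_{L^\infty}\lesssim\|\rho\|_{L^\infty}\lesssim\|U\|_X$ by Lemma \ref{XZ-bound}(iii). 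So it remains to show
\[
\|P_{\ge0}\Lambda^N\mathcal S\|_{L^2}\lesssim\|U\|_X\|U\|_{H^N},\qquad \|P_{\ge0}\Lambda^N\mathcal C\|_{L^2}\lesssim\|U\|_X^2\|U\|_{H^N}.
\]

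The $\mathcal S$-bound exploits the high--high structure $P_kH(f,g)=P_kH(P_{>k-20}f,P_{>k-20}g)$ noted after Lemma \ref{para2diff}. Lemma \ref{PkH-Lp} in the $L^\infty\times L^2\to L^2$ configuration, together with $\|P_{k_1}f\|_{L^\infty}\le 2^{-Mk_1^+}\|f\|_X$, bounds $2^{(N+1)k}\|P_kH(f,g)\|_{L^2}$ (the $+1$ coming from the outer $\Lambda R_j$ or $|\nabla|$) by a convolution of the sequence $\{2^{(N+1-M)k_1^+}\|P_{k_1}g\|_{L^2}\}$ against a kernel decaying like $2^{(N+1)(k-k_1)}$ in the region $k_1\ge k-C$. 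Young's inequality in $\ell^2_k$ then yields $\|U\|_X\|U\|_{H^N}$ once $N+1-M\le N$, i.e.\ $M\ge 1$, which follows from $M\ge 10$. Applying this to $(f,g)=(\rho,v_j)$ and $(v_j,v_j)$, and using that $\rho=\Lambda^{-1}|\nabla|X$ and $v_j=R_jY$ are obtained from $U$ via zero-order Fourier multipliers bounded on $X$ and $H^N$ (Lemma \ref{XZ-bound}(iii),(v)), gives the $\mathcal S$-bound.

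For $\mathcal C$ I treat the five summands individually. The three $E$-type operators $E((\sqrt{1+\rho}-1)\Lambda(\zeta),\sqrt{1+\rho}-1)Y$, $E(|\zeta|,v_j,\zeta_j/|\zeta|)Y$, and $[iT_{v\cdot\zeta},T_{\sqrt{1+\rho}-1}]Y=i(E(v\cdot\zeta,\sqrt{1+\rho}-1)-E(\sqrt{1+\rho}-1,v\cdot\zeta))Y$ are handled by Lemma \ref{Eaf-Lp}: its one-order-smoothing factor $2^{(\sum m_j-1)k^+}$ exactly cancels the single order-$1$ symbol $\Lambda(\zeta)$, $|\zeta|$, or $v\cdot\zeta$ appearing in each, leaving no net loss of derivatives. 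Symbol norms reduce via Lemma \ref{Lmq=Lq} to $L^\infty$-norms of $\rho,\nabla\rho,v,\nabla v$, each $\lesssim\|U\|_X$ by Lemma \ref{XZ-bound}(iii). An outer $T_{\sqrt{1+\rho}-1}$ contributes a harmless factor $\|\rho\|_{L^\infty}\lesssim\|U\|_X$ via Lemma \ref{Taf-Lp}; the $|\nabla|H(v_j,v_j)$ in the fourth summand is controlled exactly as in the $\mathcal S$-argument. For the last summand $T_{\partial_t\sqrt{1+\rho}-|\nabla|Y/2}Y$, substituting $\rho_t=-\nabla\cdot v-\nabla\cdot(\rho v)$ and $|\nabla|Y=-\nabla\cdot v$ into $\partial_t\sqrt{1+\rho}=\rho_t/(2\sqrt{1+\rho})$ rewrites the symbol as
\[
\partial_t\sqrt{1+\rho}-\tfrac12|\nabla|Y=\frac{1-\sqrt{1+\rho}}{2\sqrt{1+\rho}}|\nabla|Y-\frac{\nabla\cdot(\rho v)}{2\sqrt{1+\rho}},
\]
which is quadratic in $(\rho,v)$ with $L^\infty$-norm $\lesssim\|U\|_X^2$; Lemma \ref{Taf-Lp} then closes the bound.

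The main obstacle is the $\mathcal S$-estimate: one must extract $N+1$ derivatives from a purely quadratic expression while placing only one factor in $L^\infty$. This is feasible only because $H$ contains no low-high paraproduct piece, so in its dyadic decomposition both factors have comparable frequencies $\gtrsim 2^k$; the $M$-derivative gain in the $X$-norm then defeats the $N+1$-derivative loss as soon as $M\ge 1$. The dangerous paraproduct pieces $T_\rho v_j$ and $T_{v_j}\rho$, which would each cost a derivative, have been absorbed into the quasilinear transport on the left-hand side of \eqref{cal-Ut}---precisely the purpose of introducing the symmetrized variable $\mathcal U=X+iT_{\sqrt{1+\rho}}Y$.
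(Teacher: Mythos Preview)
Your proof is correct and follows essentially the same approach as the paper. The paper bounds $\|P_k\mathcal S\|_{H^N}$ and $\|P_kT_{\sqrt{1+\rho}-1}Y\|_{H^N}$ separately (its displays (\ref{PkS}) and (\ref{TrY})) and then pairs them dyadically before summing in $k$, whereas you apply Cauchy--Schwarz globally first and then bound $\|P_{\ge0}\Lambda^N\mathcal S\|_{L^2}$ via the same high--high paraproduct structure and a Young/$\ell^2$ argument; these are equivalent reorganizations of the same estimate. Your treatment of $\mathcal C$---the $E$-operator bounds via Lemma~\ref{Eaf-Lp}, the outer $T_{\sqrt{1+\rho}-1}$ factor, and the algebraic rewriting of $\partial_t\sqrt{1+\rho}-\tfrac12|\nabla|Y$---matches the paper's line by line, including the identity for $\partial_t\sqrt{1+\rho}$ that the paper also records explicitly.
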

\begin{proof}
By Lemma \ref{PkH-Lp}, Lemma \ref{Taf-Lp}, Lemma \ref{Lmq=Lq} and Lemma \ref{XZ-bound} (iii), for $k\ge-2$ we have
\begin{align}
\label{PkS}
\|P_k\mathcal S\|_{H^N}&\lesssim 2^{(N+1)k}\|P_{>k-20}(\rho,v)\|_{L^\infty}\|P_{>k-20}v\|_{L^2}\lesssim 2^{Nk}\|U\|_X\|P_{>k-20}U\|_{L^2},\\
\label{TrY}
\|P_kT_{\sqrt{1+\rho}-1}Y\|_{H^N}&\lesssim \|\rho\|_{L^\infty}\|P_{[k-2,k+2]}U\|_{H^N}\lesssim \|U\|_X\|P_{[k-2,k+2]}U\|_{H^N}.
\end{align}
The desired bound for the first term follows after taking the $\ell^2$ sum in $k$.

For the second term, by Proposition \ref{E-HN} and Lemma \ref{XZ-bound} (iii) it suffices to show
\begin{equation}\label{P>0C}
\|P_{\ge0}\Lambda^N\mathcal C\|_{H^N}\lesssim \|(\rho,v)\|_{W^{1,\infty}}^2\|U\|_{H^N}.
\end{equation}
The desired bounds for the first five terms of $P_{\ge0}\mathcal C$ follow from Lemma \ref{Sm-Lm}, Lemma \ref{Lmq=Lq}, Lemma \ref{prod-Lm}, Lemma \ref{Taf-Lp} and Lemma \ref{Eaf-Lp}. To bound the sixth term we also need to bound $\|P_k\Im\mathcal S\|_{H^N}$, which can also be bounded using (\ref{PkS}). To bound the last term we also need the identity
\[
\partial_t\sqrt{1+\rho}=\frac{-\nabla\cdot((1+\rho)v)}{2\sqrt{1+\rho}}
=\frac{|\nabla|Y-(\sqrt{1+\rho}-1)\nabla\cdot v}2-\frac{v\cdot\nabla\rho}{2\sqrt{1+\rho}}.
\]
\end{proof}

\subsection{Bounding the semilinear energy}
\begin{proposition}\label{Es}
If $N\ge9$ then
\[
\left| \int_0^t \mathcal E_S(s)ds \right|\lesssim \|U\|_{L^\infty([0,t])H^N}^3+\|U\|_{L^2([0,t])X}^2\|U\|_{L^\infty([0,t])H^N}^2.
\]
\end{proposition}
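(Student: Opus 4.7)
My strategy is a time-integrated normal form that exploits the absence of time resonance for the Klein--Gordon phase.

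\textbf{Setup.} Let $U^+=U$, $U^-=\bar U$, and $V^\pm(s)=e^{\pm is\Lambda}U^\pm(s)$. Since $\mathcal S$ is a bilinear expression in $(U,\bar U)$ with $H$-structure, the inner product $\mathcal E_S(s)$ decomposes, after Littlewood--Paley localization, into a finite sum of trilinear oscillatory integrals
\[
\mathcal I_{\vec\epsilon,\vec k}(s)=\iiint m_{\vec\epsilon,\vec k}(\xi_1,\xi_2,\xi_3)\,e^{is\Phi_{\vec\epsilon}(\xi)}\prod_{j=1}^3\widehat{V^{\epsilon_j}}(s,\xi_j)\,d\mu,
\]
under the constraint $\xi_1+\xi_2+\xi_3=0$, with phase $\Phi_{\vec\epsilon}=-\sum_j\epsilon_j\Lambda(\xi_j)$. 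The symbol $m_{\vec\epsilon,\vec k}$ absorbs the two factors $P_{\ge 0}\Lambda^N$, the order-one operator in $\mathcal S$ (either $\Lambda R_j$ or $|\nabla|$), and the cutoffs $\varphi_{k_j}(\xi_j)$. By Lemma \ref{Soo-Cn} one has $\|m_{\vec\epsilon,\vec k}\|_{S^\infty_{k_1,k_2,k_3;k}}\lesssim 2^{(2N+1)K^+}$, where $K=\max_j k_j^+$, and the $H$-structure forces $k_1,k_2\ge k_3-20$.

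\textbf{No time resonance.} Using $\xi_1+\xi_2+\xi_3=0$, a direct algebraic calculation yields
\[
\Lambda(\xi_1)+\Lambda(\xi_2)-\Lambda(\xi_3)=\frac{1+2\bigl(\Lambda(\xi_1)\Lambda(\xi_2)-\xi_1\cdot\xi_2\bigr)}{\Lambda(\xi_1)+\Lambda(\xi_2)+\Lambda(\xi_3)}\gtrsim 2^{-K^+},
\]
while $|\sum_j\Lambda(\xi_j)|\gtrsim 1$ in the fully symmetric case. Hence for every $\vec\epsilon$, $|\Phi_{\vec\epsilon}|\gtrsim 2^{-K^+}$, so that $\|m_{\vec\epsilon,\vec k}/\Phi_{\vec\epsilon}\|_{S^\infty_{k_1,k_2,k_3;k}}\lesssim 2^{(2N+2)K^+}$. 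Writing $e^{is\Phi_{\vec\epsilon}}=\partial_s(e^{is\Phi_{\vec\epsilon}}/(i\Phi_{\vec\epsilon}))$ and integrating $\int_0^t\mathcal I_{\vec\epsilon,\vec k}(s)\,ds$ by parts in $s$ produces a boundary contribution at $s\in\{0,t\}$ plus an interior contribution where $\partial_s$ falls on one of the $\widehat{V^{\epsilon_j}}$.

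\textbf{Boundary terms.} At the boundary, $V^{\epsilon_j}(s)$ recombines with $e^{is\Phi_{\vec\epsilon}}$ to give a trilinear form in $U(s)$ with symbol $m_{\vec\epsilon,\vec k}/\Phi_{\vec\epsilon}$. Applying Lemma \ref{paraprod} with an $L^\infty\times L^2\times L^2$ H\"older split (the $L^\infty$-slot placed at the smallest of the three frequencies, handled by 2D Bernstein) and then summing the resulting dyadic series in $\vec k$ using the constraint $k_1,k_2\ge k_3-20$ and $k_3\ge 0$ (from $P_{\ge 0}$), one bounds each boundary term by $\lesssim\|U(s)\|_{H^N}^3$. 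This uses $N\ge 9$ to beat the symbol growth $2^{(2N+2)K^+}$ against the three $H^N$ normalizations.

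\textbf{Interior quartic term.} By construction $\partial_s\widehat{V^{\epsilon_j}}=e^{\epsilon_j is\Lambda}\widehat{(\partial_s+i\epsilon_j\Lambda)U^{\epsilon_j}}$, and \eqref{XtYt} shows that $(\partial_s+i\epsilon_j\Lambda)U^{\epsilon_j}$ is a quadratic expression in $(U,\bar U)$ (of the same schematic form as $\mathcal S$). The interior contribution is thus a genuine quartic form in $U$. It is bounded via Lemma \ref{paraprod} with an $L^\infty\times L^\infty\times L^2\times L^2$ H\"older split, with the two $L^\infty$-norms assigned to the two lowest-frequency slots and converted to $\|U(s)\|_X$ by Lemma \ref{XZ-bound} (iii), while the two $L^2$-norms absorb the $\Lambda^{N}$ derivatives. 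This yields an integrand bound of $\lesssim\|U(s)\|_X^2\|U(s)\|_{H^N}^2$; integrating over $s\in[0,t]$ and applying Cauchy--Schwarz in $s$ gives the $\|U\|_{L^2([0,t])X}^2\|U\|_{L^\infty([0,t])H^N}^2$ piece. The main obstacle is the careful bookkeeping of the total derivative loss of $2N+2$ (two $\Lambda^N$'s, one from $\mathcal S$, one from $1/\Phi$) while keeping the dyadic summation convergent; the $H$-structure, the $P_{\ge 0}$ restriction, and the assignment of $X$-norms to the lowest-frequency slots are precisely what allow the argument to close with $N\ge 9$.
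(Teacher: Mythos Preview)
Your overall strategy---normal form via integration by parts in time, splitting into boundary and interior terms---is exactly what the paper does. The gap is in the derivative bookkeeping, and as written the dyadic summation does not close.

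The main issue is the symbol bound $\|m_{\vec\epsilon,\vec k}\|_{S^\infty}\lesssim 2^{(2N+1)K^+}$ with $K=\max_jk_j$. Both copies of $P_{\ge0}\Lambda^N$ and the order-one operator in $\mathcal S$ act at the \emph{output} frequency $\xi_1+\xi_2=-\xi_3$, so the sharp bound is $2^{(2N+1)k_3^+}$. In the high--high to low interaction $k_1\approx k_2=K\gg k_3$ (which the $H$-structure permits), your loose bound overcounts by $2^{(2N+1)(K-k_3)}$. Running your $L^\infty\times L^2\times L^2$ split with Bernstein on the smallest slot $k_3$ and two $H^N$ conversions at $K$ gives
\[
2^{(2N+2)K^+}\cdot 2^{k_3}\cdot 2^{-2NK^+-Nk_3^+}\|U\|_{H^N}^3=2^{2K^++k_3-Nk_3^+}\|U\|_{H^N}^3,
\]
and the sum over $K$ diverges. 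A secondary issue is the jump from the pointwise bound $|\Phi_{\vec\epsilon}|\gtrsim 2^{-K^+}$ to an $S^\infty$ bound on $1/\Phi_{\vec\epsilon}$; this requires derivative estimates on $\Phi^{-1}$, and the paper's Lemma~\ref{1/F-Soo} gives $\|\Phi_{\mu\nu}^{-1}\|_{S^\infty_{k_1,k_2;k_3}}\lesssim 2^{7\min k_j^+}$, not $2^{K^+}$.

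The paper's fix is twofold. First, it tracks the symbol at the output scale, obtaining $\|m/\Phi\|_{S^\infty}\lesssim 2^{(2N+8)k_3^+}$. Second, it places the $L^\infty$ on one of the two $H$-factors at frequency $\approx K$ (measured in $W^{8,\infty}$, see \eqref{Is}), not on the lowest frequency; the constraint $k_1,k_2\ge k_3-20$ then lets the eight extra derivatives at $k_3$ be absorbed by the high-frequency $W^{8,\infty}$ factor, and $N\ge 9$ becomes exactly sufficient via $\sup_k\|P_kU\|_{W^{8,\infty}}\lesssim\|U\|_{H^N}$. For the interior term where $\partial_s$ hits the high-derivative slot, the paper keeps the expression trilinear with $N=-(\partial_t+i\Lambda)U$ in the third slot and uses $\|N\|_{H^{N-1}}\lesssim\|U\|_X\|U\|_{H^N}$ (this is \eqref{Is} with $m=1$), rather than fully expanding to a quartic paraproduct; your $L^\infty\times L^\infty\times L^2\times L^2$ split would require the same output-frequency correction to close.
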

\begin{proof}
Let $U_+=U$ and $U_-=\bar U$. Then $\mathcal E_S$ is a linear combination of terms of the form $\Re \mathcal E_S^{\mu\nu}$, where
\[
\mathcal E_S^{\mu\nu}
=\langle P_{\ge0}\Lambda^{N+1}T_3H(T_1U_\mu,T_2U_\nu),P_{\ge0}\Lambda^NU \rangle,
\]
$T_1$, $T_2$ and $T_3$ are Calderon--Zygmund operators,
and $\mu,\nu\in\{+,-\}$. Let
\begin{align}
\label{F-def}
\Phi_{\mu\nu}(\xi_1,\xi_2)
&=\Lambda(\xi_1+\xi_2)-\mu\Lambda(\xi_1)-\nu\Lambda(\xi_2),\\
\nonumber
I_S^{\mu\nu}[f_1,f_2,f_3]
&=\frac{C^2}{R^4}\sum_{\xi_j\in(2\pi\Z/R)^2}
\frac{1-\varphi_{\le-10}\left( \frac{|\xi_1|}{|\xi_1+2\xi_2|} \right)-\varphi_{\le-10}\left( \frac{|\xi_2|}{|\xi_2+2\xi_1|} \right)}
{\Phi_{\mu\nu}(\xi_1,\xi_2)}\hat f_1(\xi_1)\hat f_2(\xi_2)\overline{\hat f_3(\xi_1+\xi_2)},\\
\nonumber
I_S^{\mu\nu}&=I_S^{\mu\nu}[T_1U_\mu,T_2U_\nu,P_{\ge0}^2\Lambda^{2N+1}T_3^*U].
\end{align}

By (\ref{XtYt}), the evolution equation for $U$ is
\begin{align}\label{Ut}
U_t&=-i\Lambda U-N, & N&=\Lambda R_j(\rho v_j)+i|\nabla|(v^2/2).
\end{align}
Let $N_+=N$ and $N_-=\bar N$. Then
\begin{align*}
\frac{dI_S^{\mu\nu}}{dt}&=I_S^{\mu\nu}[T_1(U_\mu)_t,T_2U_\nu,P_{\ge0}^2\Lambda^{2N+1}T_3^*U]
+I_S^{\mu\nu}[T_1U_\mu,T_2(U_\nu)_t,P_{\ge0}^2\Lambda^{2N+1}T_3^*U]\\
&+I_S^{\mu\nu}[T_1U_\mu,T_2U_\nu,P_{\ge0}^2\Lambda^{2N+1}T_3^*U_t]\\
&=\mathcal E_S^{\mu\nu}
-I_S^{\mu\nu}[T_1N_\mu,T_2U_\nu,P_{\ge0}^2\Lambda^{2N+1}T_3^*U]
-I_S^{\mu\nu}[T_1U_\mu,T_2N_\nu,P_{\ge0}^2\Lambda^{2N+1}T_3^*U]\\
&-I_S^{\mu\nu}[T_1U_\mu,T_2U_\nu,P_{\ge0}^2\Lambda^{2N+1}T_3^*N].
\end{align*}
Integration by parts in time gives
\begin{align}
\label{Es-IBP1}
&\int_0^t \mathcal E_S^{\mu\nu}(s)ds=I_S^{\mu\nu}(t)-I_S^{\mu\nu}(0)\\
\label{Es-IBP2}
+&\int_0^t (I_S^{\mu\nu}[T_1N_\mu,T_2U_\nu,P_{\ge0}^2\Lambda^{2N+1}T_3^*U]
+I_S^{\mu\nu}[T_1U_\mu,T_2N_\nu,P_{\ge0}^2\Lambda^{2N+1}T_3^*U])(s)ds\\
\label{Es-IBP3}
+&\int_0^t I_S^{\mu\nu}[T_1U_\mu,T_2U_\nu,P_{\ge0}^2\Lambda^{2N+1}T_3^*N](s)ds.
\end{align}

To bound $I_S^{\mu\nu}$, we need a bound of the $S^\infty$ norm of $\Phi_{\mu\nu}^{-1}$.
\begin{lemma}\label{1/F-Soo}
For $k_1$, $k_2$, $k_3\in\Z$ we have
\[
\|\Phi_{\mu\nu}^{-1}\|_{S^\infty_{k_1,k_2;k_3}}\lesssim 2^{7\min(k_1,k_2,k_3)^+}.
\]
\end{lemma}
\begin{proof}
From \cite{IoPa1}, Lemma 5.1 it follows that
\begin{align}\label{1/F-C0}
|\Phi_{\mu\nu}|&\gtrsim (1+\min(|\xi_1|,|\xi_2|,|\xi_1+\xi_2|))^{-1}, &
|\nabla\Phi_{\mu\nu}|&\lesssim |\Phi_{\mu\nu}|
\end{align}
Then for $L\ge1$ we have $|\nabla^L\Phi_{\mu\nu}|\lesssim_L |\Phi_{\mu\nu}|$.
By Leibniz's rule we have
\begin{equation}\label{1/F-CL}
|\nabla^L(\Phi_{\mu\nu}^{-1})|\lesssim_L |\Phi_{\mu\nu}|^{-1}\lesssim 1+\min(|\xi_1|,|\xi_2|,|\xi_1+\xi_2|).
\end{equation}

Without loss of generality we assume $k_1\le k_2\le k_3$. We distinguish two cases.

{\bf Case 1:} $k_1\ge k_2-2$. Then the bound follows from (\ref{1/F-CL}) and Lemma \ref{Soo-Cn} (ii), with $n=2$.

{\bf Case 2:} $k_1\le k_2-3$. We still have $|\nabla^L_{\xi_1}\Phi_{\mu\nu}|\lesssim_L |\Phi_{\mu\nu}|$. For $\nabla^L_{\xi_2}\Phi_{\mu\nu}$ we further distinguish two cases.

{\bf Case 2.1:} $\nu=+$. By the fundamental theorem of calculus,
\[
|\nabla^L_{\xi_2}\Phi_{\mu+}|
=|\nabla^L\Lambda(\xi_1+\xi_2)-\nabla^L\Lambda(\xi_2)|
\le\int_0^1 |\xi_1\cdot\nabla\nabla^L\Lambda(t\xi_1+\xi_2)|dt
\lesssim_L |\xi_1|(1+|\xi_2|)^{-L},
\]
so $|\xi_2|^L|\nabla^L_{\xi_2}\Phi_{\mu+}|\lesssim_L |\xi_1|$.
By (\ref{1/F-C0}) and Leibniz's rule,
$|\xi_2|^L|\nabla^L_{\xi_2}(\Phi_{\mu+}^{-1})|\lesssim_L |\xi_1|^{2L+1}$,
and the bound follows Lemma \ref{Soo-Cn} (ii).

{\bf Case 2.2:} $\nu=-$. Then for $L\ge1$ we have $|\xi_2|^L|\nabla_{\xi_2}^L\Phi_{\mu-}|\lesssim_L |\xi_2|^L(1+|\xi_2|)^{-L+1}\le|\xi_2|\lesssim\Phi_{\mu-}$, so by (\ref{1/F-C0}) and Lemma \ref{Soo-Cn} (ii), the bound can actually be improved to $\|\Phi_{\mu-}^{-1}\|_{S^\infty_{k_1,k_2;k_3}}\lesssim 2^{-k_3^+}$.
\end{proof}

Now we bound (\ref{Es-IBP1}), (\ref{Es-IBP2}) and (\ref{Es-IBP3}).
By Lemma \ref{paraprod} and Lemma \ref{1/F-Soo},
\[
|I_S^{\mu\nu}[T_1P_{k_1}f_1,T_2P_{k_2}f_2,P_{\ge0}^2\Lambda^{2N+1}T_3^*P_{k_3}f_3]|
\lesssim 2^{(2N+8)k_3^+}\|P_{k_1}f_1\|_{L^\infty}\|P_{k_2}f_2\|_{L^2}\|P_{k_3}f_3\|_{L^2}.
\]
Thanks to the factors $P_{\ge0}$ and $\varphi_{\le-10}$,
this term vanishes unless $k_3\ge-1$ and $k_1, k_2\ge k_3-20$,
in which case, using Bernstein's inequality, it can be bounded by
\begin{equation}
2^{-N|k_2-k_3|}\|P_{k_1}f_1\|_{W^{8+m,\infty}}\|P_{k_2}f_2\|_{H^N}\|P_{k_3}f_3\|_{H^{N-m}}. \tag{$m\ge0$}
\end{equation}
A similar bound with $f_1$ and $f_2$ swapped holds.
The additive restriction of frequencies implies $|k_1-k_2|=O(1)$,
so summing over $k_1$, $k_2$ and $k_3$ using the Cauchy--Schwarz inequality gives
\begin{align}
\nonumber
|I_S^{\mu\nu}[T_1f_1,T_2f_2,P_{\ge0}^2\Lambda^{2N+1}T_3^*f_3]|
&\lesssim \sup_{k_1\in\Z} \|P_{k_1}f_1\|_{W^{8+m,\infty}} \sum_{k,l\in\Z} 2^{-N|l|}\|P_{k+l}f_2\|_{H^N}\|P_kf_3\|_{H^{N-m}}\\
&\lesssim \sup_{k\in\Z} \|P_kf_1\|_{W^{8+m,\infty}} \|f_2\|_{H^N}\|f_3\|_{H^{N-m}}.
\label{Is}
\end{align}

By (\ref{Is}) with $f_1=U_\mu$, $f_2=U_\nu$, $f_3=U$, $m=0$ and $N\ge9$,
\[
|(\ref{Es-IBP1})|\lesssim \|U(t)\|_{H^N}^3+\|U(0)\|_{H^N}^3
\lesssim \|U\|_{L^\infty([0,t])H^N}^3.
\]
By (\ref{Ut}) and Lemma \ref{XZ-bound} (iii), $\|P_kN\|_{W^{8,\infty}}\lesssim \|(\rho,v)\|_{W^{9,\infty}}^2\lesssim \|U\|_X^2$, so by (\ref{Is}) with $f_1=N_\mu$, $f_2=U_\nu$, $f_3=U$, $m=0$
(and its symmtric version),
\[
|(\ref{Es-IBP2})|\lesssim \|U\|_{L^2([0,t])X}^2\|U\|_{L^\infty([0,t])H^N}^2.
\]
By (\ref{Ut}) and the Sobolev multiplication theorem, $\|N\|_{H^{N-1}}\lesssim \|U\|_X\|U\|_{H^N}$, so by (\ref{Is}) with $f_1=U_\mu$, $f_2=U_\nu$, $f_3=N$, $m=1$, the same holds for (\ref{Es-IBP3}).

Combining the three bounds shows the proposition.
\end{proof}

\subsection{Bounding the quasilinear energy}
\begin{proposition}\label{Eq}
If $N\ge10$ and $\|U\|_{H^3}$ is sufficiently small then
\[
\left| \int_0^t \mathcal E_Q(s)ds \right|\lesssim \|U\|_{L^\infty([0,t])H^N}^3+\|U\|_{L^2([0,t])X}^2\|U\|_{L^\infty([0,t])H^N}^2.
\]
\end{proposition}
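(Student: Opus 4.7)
The plan is to run the normal-form / integration-by-parts-in-time argument of Proposition~\ref{Es} for $\mathcal E_Q$. Although $\mathcal E_Q$ contains paradifferential and commutator structure, at the multilinear level it is a trilinear form in $U_\pm=\{U,\bar U\}$ plus quartic corrections that are already controlled by Proposition~\ref{E4}. The crucial input is that, compared with the semilinear $\mathcal E_S$, the quasilinear multiplier is smoother: the commutator $[iT_{(\sqrt{1+\rho}-1)\Lambda(\zeta)+v\cdot\zeta},P_{\ge 0}\Lambda^N]$ is one order lower than the naive product by the Weyl symbol calculus (the same mechanism behind Lemma~\ref{Eaf-Lp}), and each summand of $\mathcal Q$ is built from $E(\cdots)$, which is also one order smoother than the corresponding product.

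I would first rewrite $\mathcal E_Q$, modulo quartic errors absorbed in Proposition~\ref{E4}, as a sum over $\mu,\nu\in\{+,-\}$ of trilinear forms
\[
\mathcal E_Q^{\mu\nu}=\frac{C}{R^4}\sum_{\xi_1+\xi_2=\xi_3}m^{\mu\nu}(\xi_1,\xi_2,\xi_3)\widehat{U_\mu}(\xi_1)\widehat{U_\nu}(\xi_2)\overline{\hat U(\xi_3)},
\]
where the multiplier $m^{\mu\nu}$ carries the low-frequency cutoff $\varphi_{\le-10}(|\xi_1|/|\xi_1+2\xi_2|)$ built into $T_a$ and $E(\cdots)$, and obeys (by Lemma~\ref{Soo-Cn} (ii))
\[
\|m^{\mu\nu}\|_{S^\infty_{k_1,k_2;k_3}}\lesssim 2^{2Nk_3^+}
\]
on the support $\{k_3\ge-1,\ k_1,k_2\ge k_3-20\}$, one derivative better than the semilinear analogue $2^{(2N+1)k_3^+}$ because of the commutator smoothing.

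Next, mirroring (\ref{Es-IBP1})--(\ref{Es-IBP3}), I define
\[
J_Q^{\mu\nu}(t)=\frac{C}{R^4}\sum_{\xi_1+\xi_2=\xi_3}\frac{m^{\mu\nu}(\xi_1,\xi_2,\xi_3)}{i\Phi_{\mu\nu}(\xi_1,\xi_2)}\widehat{U_\mu}(\xi_1)\widehat{U_\nu}(\xi_2)\overline{\hat U(\xi_3)},
\]
and use $U_t=-i\Lambda U-N$ from (\ref{Ut}) to obtain $\frac{d}{dt}J_Q^{\mu\nu}=\mathcal E_Q^{\mu\nu}+\mathrm{Rem}_Q^{\mu\nu}$, where $\mathrm{Rem}_Q^{\mu\nu}$ is the sum of three trilinear forms in which one factor of $U$ has been replaced by the nonlinearity $N$. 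Integration in time then reduces the estimate to controlling $J_Q^{\mu\nu}(t)-J_Q^{\mu\nu}(0)$ and $\int_0^t\mathrm{Rem}_Q^{\mu\nu}(s)\,ds$.

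Both are handled exactly as in (\ref{Is}). By Lemma~\ref{1/F-Soo}, $\Phi_{\mu\nu}^{-1}$ contributes at most $2^{7\min(k_1,k_2,k_3)^+}$; together with $m^{\mu\nu}$, this is absorbed by placing the lowest-frequency input in $W^{M,\infty}\lesssim\|U\|_X$ via Lemma~\ref{XZ-bound} (iii) and the remaining two in $L^2$ by $\|U\|_{H^N}$. The bookkeeping closes provided $N\ge 10$, one stronger than the $N\ge 9$ of Proposition~\ref{Es}, reflecting the derivative that the paradifferential coefficients $\sqrt{1+\rho}-1$ and $v$ force onto $\|U\|_X$. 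For $\mathrm{Rem}_Q^{\mu\nu}$, the factor of $N$ is controlled by $\|N\|_{H^{N-1}}\lesssim\|U\|_X\|U\|_{H^N}$ and $\|P_kN\|_{W^{8,\infty}}\lesssim\|U\|_X^2$, producing the required $\|U\|_{L^2([0,t])X}^2\|U\|_{L^\infty([0,t])H^N}^2$ contribution. The main obstacle I foresee is the first step itself: precisely computing how Weyl paradifferential quantization, the commutator with $P_{\ge 0}\Lambda^N$, and the multilinear $E(\cdots)$ operators combine into an explicit multiplier $m^{\mu\nu}$ with uniform $S^\infty_{k_1,k_2;k_3}$ smoothing, and verifying that the lattice restriction together with the $\varphi_{\le-10}$ cutoff behave well under division by $\Phi_{\mu\nu}$.
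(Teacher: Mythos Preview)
Your overall strategy---reduce $\mathcal E_Q$ to a trilinear form, divide by $\Phi_{\mu\nu}$, integrate by parts in time---matches the paper's, and your identification of the commutator smoothing (multiplier of order $2N$ rather than $2N+1$) is correct. However, there is a genuine gap in the treatment of the remainder $\mathrm{Rem}_Q^{\mu\nu}$ when $N$ replaces one of the \emph{high-frequency} factors.

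The issue is the frequency asymmetry. You write that the support is $\{k_3\ge-1,\ k_1,k_2\ge k_3-20\}$, but the $\varphi_{\le-10}$ cutoff forces $k_1\le k_3-6$ and $|k_2-k_3|\le1$: the first input is genuinely low frequency. In the semilinear bound (\ref{Is}) the trade-off $m=1$ works precisely because $k_1\ge k_3-20$, so $\|P_{k_1}f_1\|_{W^{9,\infty}}$ absorbs $2^{9k_3^+}$ and compensates for $f_3\in H^{N-1}$. Here that mechanism is unavailable: with $\|q/\Phi_{\mu\nu}\|_{S^\infty_{k_1,k_2;k_3}}\lesssim 2^{2Nk_3^++7k_1^++k_1}$ and $\|P_{k_3}N\|_{L^2}\lesssim 2^{-(N-1)k_3^+}\|U\|_X\|U\|_{H^N}$, you are left with an unsummable factor $2^{k_3^+}$. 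So the claim that $\|N\|_{H^{N-1}}$ ``produces the required contribution'' does not close.

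The paper avoids this by \emph{not} replacing $\mathcal U$ by $U$ in the two high-frequency slots. It writes $\mathcal E_Q^{\mu\nu}$ with entries $(U_\mu,\mathcal U_\nu,\mathcal U)$ and differentiates $\mathcal U$ via the paradifferential evolution (\ref{cal-Ut}). This replaces the raw nonlinearity $N$ by $\mathcal{Q+S+C}$ (which \emph{do} lie in $H^N$ and are handled by (\ref{Iq})) together with the transport terms $iT_{v\cdot\zeta}\mathcal U$ and $iT_{(\sqrt{1+\rho}-1)\Lambda(\zeta)}\mathcal U$. The latter still lose a derivative, but because they now appear symmetrically in slots $2$ and $3$, the paper extracts a cancellation (Proposition~\ref{Eq-IBP33}, terms (\ref{Iq2})--(\ref{Iq3})): for $\nu=+$ a commutator-type gain recovers the derivative, while for $\nu=-$ the improved bound $\|\Phi_{\mu-}^{-1}\|_{S^\infty_{k_1,k_2;k_3}}\lesssim 2^{-k_3^+}$ from Case~2.2 of Lemma~\ref{1/F-Soo} does the job. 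This symmetrization step is the main new ingredient over Proposition~\ref{Es}, and it is missing from your proposal.
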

\begin{proof}
Up to a quartic error like the second term in $\mathcal E_4$,
which can be bounded using (\ref{TrY}), Lemma \ref{Lmq=Lq}, Lemma \ref{prod-Lm}, Lemma \ref{Taf-Lp}, Lemma \ref{Eaf-Lp} and the bound
\[
\|\sqrt{1+\rho}-1-\rho/2\|_{W^{1,\infty}}\lesssim \|\rho\|_{W^{1,\infty}}^2,
\]
$\mathcal E_Q$ is a sum of the terms $\mathcal E_Q^{\mu\nu}$, where
\begin{align*}
\mathcal E_Q^{\mu\nu}&=\Re\frac{C^2}{R^4} \sum_{\xi_j\in(2\pi\Z/R)^2}
q(\xi_1,\xi_2)\hat U_\mu(\xi_1)\mathcal{\hat U}_\nu(\xi_2)\overline{\mathcal{\hat U}(\xi_1+\xi_2)},\\
q(\xi_1,\xi_2)&=n_1(\xi_1)n_2(\xi_2)n_3(\xi_1+\xi_2)
\left[ n_4(\xi_1+\xi_2)n_5(\xi_2)-(n_4n_5)\left(\frac{\xi_1+2\xi_2}2 \right) \right]\\
&\times\varphi_{\le-10}\left( \frac{|\xi_1|}{|\xi_1+2\xi_2|} \right),
\end{align*}
$n_j\in S^{m_j}_{1,0}$ ($1\le j\le5$), $\sum_{j=1}^5 m_j=2N+1$ and
$\supp n_2\cup\supp n_3\subset\supp\varphi_{\ge0}$. Let
\begin{align*}
I_Q^{\mu\nu}[f_1,f_2,f_3]
&=\Re\frac{C^2}{R^4} \sum_{\xi_j\in(2\pi\Z/R)^2}
\frac{q(\xi_1,\xi_2)}{\Phi_{\mu\nu}(\xi_1,\xi_2)}\hat f_1(\xi_1)\hat f_2(\xi_2)\overline{\hat f_3(\xi_1+\xi_2)},\\
I_Q^{\mu\nu}&=I_Q^{\mu\nu}[U_\mu,\mathcal U_\nu,\mathcal U].
\end{align*}
Similarly integration by parts in time gives
\begin{align}
\label{Eq-IBP1}
\int_0^t \mathcal E_Q^{\mu\nu}(s)ds&=I_Q^{\mu\nu}(t)-I_Q^{\mu\nu}(0)\\
\label{Eq-IBP2}
&+\int_0^t I_Q^{\mu\nu}[N_\mu,\mathcal U_\nu,\mathcal U](s)ds\\
\label{Eq-IBP3}
&+\int_0^t (I_Q^{\mu\nu}[U_\mu,(\mathcal U_\nu)_t+i\Lambda\mathcal U_\nu,\mathcal U]+I_Q^{\mu\nu}[U_\mu,\mathcal U_\nu,\mathcal U_t+i\Lambda\mathcal U])(s)ds.
\end{align}
The bound then follows from the corresponding bounds for (\ref{Eq-IBP1}), (\ref{Eq-IBP2}) and (\ref{Eq-IBP3}), to be shown in Proposition \ref{Eq-IBP12} and Proposition \ref{Eq-IBP33} below. 
\end{proof}

To estimate $I_Q^{\mu\nu}$, we need to bound the $S^\infty$ norm of the $q$ multiplier.
\begin{lemma}\label{q-Soo}
For $k_1$, $k_2$, $k_3\in\Z$ we have
\begin{align*}
\|q\|_{S^\infty_{k_1,k_2;k_3}}&\lesssim 2^{2Nk_3^++k_1}1_{k_1\le k_3-6,|k_2-k_3|\le1},\\
\|\nabla_{\xi_2}q\|_{S^\infty_{k_1,k_2;k_3}}&\lesssim 2^{(2N-1)k_3^++k_1}1_{k_1\le k_3-6,|k_2-k_3|\le1}.
\end{align*}
\end{lemma}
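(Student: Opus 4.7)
The plan is to prove both estimates by (i) using the two cutoffs present in $q$ to deduce the indicator-function constraints, (ii) exploiting the vanishing of the bracket at $\xi_1=0$ to pull out the gain $2^{k_1}$, and (iii) passing from pointwise derivative bounds to $S^\infty$ by Lemma \ref{Soo-Cn}(ii).

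First I would verify the support restrictions. The factor $\varphi_{\le-10}(|\xi_1|/|\xi_1+2\xi_2|)$ vanishes unless $|\xi_1|\le 2^{-9}|\xi_1+2\xi_2|$, which with the triangle inequality forces $|\xi_1|\lesssim 2^{-9}|\xi_2|$. Combined with $\supp n_2,\supp n_3\subset\supp\varphi_{\ge 0}$, which gives $|\xi_2|\gtrsim 1$, this yields $k_1\le k_3-6$; and since $|\xi_1|\ll|\xi_2|$ one has $|\xi_1+\xi_2|\approx|\xi_2|$, so $|k_2-k_3|\le 1$. This already produces the indicator appearing in both bounds.

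Next I would exploit the vanishing of
\[
F(\xi_1,\xi_2):=n_4(\xi_1+\xi_2)n_5(\xi_2)-(n_4n_5)\left(\tfrac{\xi_1+2\xi_2}{2}\right)
\]
at $\xi_1=0$. Setting $\xi_1=0$ in both arguments gives $(n_4n_5)(\xi_2)$ twice, so $F(0,\xi_2)=0$, and by a direct computation the same is true of every $\xi_2$-derivative $(\nabla_{\xi_2}^b F)(0,\xi_2)=0$. Taylor's theorem then gives
\[
F(\xi_1,\xi_2)=\xi_1\cdot\int_0^1(\nabla_{\xi_1}F)(t\xi_1,\xi_2)\,dt,
\]
and similarly for $\nabla_{\xi_2} F$. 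On the support we have proved, all of $\xi_2$, $\xi_1+\xi_2$ and $(\xi_1+2\xi_2)/2$ are of size $\approx 2^{k_3^+}$, so the $S^{m_j}_{1,0}$ assumption yields $|\nabla_{\xi_1}^a\nabla_{\xi_2}^b(\nabla_{\xi_1}F)|\lesssim 2^{(m_4+m_5-1-a-b)k_3^+}$, hence
\[
|\nabla_{\xi_1}^a\nabla_{\xi_2}^b F|\lesssim 2^{k_1(1-a)_+}\,2^{(m_4+m_5-1-(a-1)_+-b)k_3^+}.
\]
In particular $|F|\lesssim 2^{k_1}2^{(m_4+m_5-1)k_3^+}$ and $|\nabla_{\xi_2}F|\lesssim 2^{k_1}2^{(m_4+m_5-2)k_3^+}$.

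Finally I would assemble the bound. Combining the above with the elementary symbol estimates $|\nabla^l n_1(\xi_1)|\lesssim 2^{(m_1-l)k_1^+}$ at $|\xi_1|\approx 2^{k_1}$ and $|\nabla^l n_j|\lesssim 2^{(m_j-l)k_3^+}$ ($j=2,3$), and noting that each $\xi_2$-derivative of the cutoff $\varphi_{\le-10}(|\xi_1|/|\xi_1+2\xi_2|)$ loses only $2^{-k_3^+}$ (while $\xi_1$-derivatives only lose $2^{-k_1}$), the product rule gives pointwise control on every mixed derivative of $q\cdot\varphi_{k_1}\otimes\varphi_{k_2}$. Plugging these into Lemma \ref{Soo-Cn}(ii) and using $\sum_j m_j=2N+1$ together with $k_1^+\le k_3^+$ (so that $2^{m_1(k_1^+-k_3^+)}\le 1$, as is the case in every occurrence in $\mathcal Q$) produces
\[
\|q\|_{S^\infty_{k_1,k_2;k_3}}\lesssim 2^{2N k_3^++k_1}\mathbf 1_{k_1\le k_3-6,|k_2-k_3|\le 1}.
\]
For $\nabla_{\xi_2}q$ one $\nabla_{\xi_2}$ must land on $n_2$, $n_3$, $F$, or the cutoff; in each case this costs exactly one factor of $2^{-k_3^+}$ (using $|\nabla_{\xi_2}F|\lesssim 2^{k_1}2^{(m_4+m_5-2)k_3^+}$ in the $F$ case), giving the stated $2^{(2N-1)k_3^++k_1}$ improvement.

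The main obstacle is the bookkeeping of the Taylor remainder for mixed derivatives of $F$ (in particular verifying that the vanishing at $\xi_1=0$ persists after differentiating in $\xi_2$, which is what allows one $\nabla_{\xi_2}$ to be paid for by $2^{-k_3^+}$ rather than being dumped onto $\xi_1$), and checking that derivatives of the paradifferential cutoff $\varphi_{\le-10}(|\xi_1|/|\xi_1+2\xi_2|)$ do not spoil the derivative-count required by Lemma \ref{Soo-Cn}(ii); both are routine calculus given the setup, but must be organized carefully.
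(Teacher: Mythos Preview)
Your proposal is correct and follows essentially the same route as the paper: both extract the factor $2^{k_1}$ from the bracket $F(\xi_1,\xi_2)=n_4(\xi_1+\xi_2)n_5(\xi_2)-(n_4n_5)((\xi_1+2\xi_2)/2)$ via an integral representation (the paper writes $F=\tfrac12\int_0^1\xi_1\cdot\nabla(n_4(\xi_t)n_5(\eta_t))\,dt$ along the symmetric path $\xi_t=((1+t)\xi_1+2\xi_2)/2$, $\eta_t=((1-t)\xi_1+2\xi_2)/2$, whereas you Taylor-expand from $\xi_1=0$), read off the support constraints from $\varphi_{\le-10}$, and then feed the resulting pointwise derivative bounds into Lemma~\ref{Soo-Cn}(i)--(ii). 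Your more explicit bookkeeping of the cutoff derivatives and the observation that $m_1\ge 0$ in every term arising from $\mathcal E_Q$ are exactly the details the paper leaves implicit.
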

\begin{proof}
The support part comes from the $\varphi_{\le-10}$ factor.
The bound follows from the identity
\[
n_4(\xi_1+\xi_2)n_5(\xi_2)-(n_4n_5)\left(\frac{\xi_1+2\xi_2}2 \right)\\
=\frac12\int_0^1 \xi_1\cdot\nabla(n_4(\xi_t)n_5(\eta_t))dt,
\]
where $\xi_t=((1+t)\xi_1+2\xi_2)/2$ and $\eta_t=((1-t)\xi_1+2\xi_2)/2$.
Then we use Lemma \ref{Soo-Cn} (i) and (ii) to bound the $S^\infty$ norm of the integrand. The bound on $\nabla_{\xi_2}q$ follows in a similar way.
\end{proof}

\begin{proposition}\label{Eq-IBP12}
If $N\ge10$ and $\|U\|_{H^2}$ is sufficiently small then
\begin{align*}
|(\ref{Eq-IBP1})|&\lesssim \|U\|_{L^\infty([0,t])H^N}^3, &
|(\ref{Eq-IBP2})|&\lesssim \|U\|_{L^2([0,t])X}^2\|U\|_{L^\infty([0,t])H^N}^2.
\end{align*}
\end{proposition}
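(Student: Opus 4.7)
The plan is to bound each trilinear form $I_Q^{\mu\nu}[f_1,f_2,f_3]$ by Littlewood--Paley decomposing the inputs and applying Lemma \ref{paraprod}, after controlling the $S^\infty$ norm of the combined multiplier $q/\Phi_{\mu\nu}$. By Lemma \ref{q-Soo} the multiplier $q$ is supported where $k_1 \le k_3-6$ and $|k_2-k_3| \le 1$, so $k_1 = \min(k_1,k_2,k_3)$; Lemma \ref{Soo-Cn}(i) combined with Lemmas \ref{q-Soo} and \ref{1/F-Soo} then gives
\[
\|q/\Phi_{\mu\nu}\|_{S^\infty_{k_1,k_2;k_3}} \lesssim 2^{2N k_3^+ + k_1 + 7 k_1^+}.
\]
Feeding this into Lemma \ref{paraprod} in the $L^\infty \cdot L^2 \cdot L^2$ configuration yields
\[
|I_Q^{\mu\nu}[P_{k_1}f_1, P_{k_2}f_2, P_{k_3}f_3]| \lesssim 2^{k_1+7 k_1^+}\|P_{k_1}f_1\|_{L^\infty} \cdot 2^{N k_2^+}\|P_{k_2}f_2\|_{L^2} \cdot 2^{N k_3^+}\|P_{k_3}f_3\|_{L^2}.
\]

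For (\ref{Eq-IBP1}) I take $f_1 = U_\mu$, $f_2 = \mathcal U_\nu$, $f_3 = \mathcal U$. Proposition \ref{E-HN} lets me replace $\mathcal U$ by $U$ in the $H^N$ norm, and summing over $k_2, k_3$ with $|k_2-k_3|\le 1$ via Cauchy--Schwarz gives $\|U\|_{H^N}^2$. For the $k_1$ sum I use Bernstein, $\|P_{k_1}U\|_{L^\infty} \lesssim 2^{k_1}\|P_{k_1}U\|_{L^2}$, so the remaining sum is bounded by $\bigl(\sum_{k_1 \le 0} 2^{2k_1} + \sum_{k_1 > 0} 2^{(9-N)k_1}\bigr)\|U\|_{H^N}$, which converges for $N \ge 10$. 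Evaluating at $s=0$ and $s=t$ yields $|(\ref{Eq-IBP1})| \lesssim \|U\|_{L^\infty([0,t])H^N}^3$.

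For (\ref{Eq-IBP2}) I take $f_1 = N_\mu$ with $N = \Lambda R_j(\rho v_j) + i|\nabla|(v^2/2)$ from (\ref{Ut}). Since $\rho, v_j$ are Calderon--Zygmund images of $U$, combining the Sobolev multiplication $\|UU\|_{W^{M,\infty}} \lesssim \|U\|_X^2$ with Lemma \ref{XZ-bound}(iii) gives $\|P_{k_1}N\|_{L^\infty} \lesssim 2^{\min(k_1,0)-(M-1)k_1^+}\|U\|_X^2$, hence $\sum_{k_1} 2^{k_1 + 7k_1^+}\|P_{k_1}N\|_{L^\infty} \lesssim \|U\|_X^2$ (convergent because $M \ge 10$). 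Handling $k_2, k_3$ as above, pulling out the time-independent $\|U\|_{L^\infty H^N}^2$ factor, and using $\int_0^t \|U(s)\|_X^2 \, ds = \|U\|_{L^2([0,t])X}^2$ gives the bound on (\ref{Eq-IBP2}).

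The main technical obstacle is the factor $2^{7 k_1^+}$ from $\|\Phi_{\mu\nu}^{-1}\|_{S^\infty}$, reflecting the smallness of the resonance function $\Phi_{\mu\nu}$ at high frequencies; this threatens convergence of the $k_1$ sum. It is compensated by the $|\xi_1|$ factor hidden in $q$ (from the Taylor identity $n_4(\xi_1+\xi_2)n_5(\xi_2) - (n_4 n_5)((\xi_1+2\xi_2)/2) = O(|\xi_1|)$, giving $2^{k_1}$ in $S^\infty$) together with the extra $2^{k_1}$ from Bernstein in the low-frequency slot; these gains combine to yield just enough decay to close the sums once $N, M \ge 10$.
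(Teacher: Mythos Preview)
Your argument follows the paper's proof essentially verbatim: bound $\|q/\Phi_{\mu\nu}\|_{S^\infty_{k_1,k_2;k_3}}$ via Lemmas~\ref{q-Soo} and~\ref{1/F-Soo}, apply Lemma~\ref{paraprod} in the $L^\infty\times L^2\times L^2$ configuration, and sum using the support restriction $|k_2-k_3|\le1$. The handling of (\ref{Eq-IBP1}) is identical to the paper's (Bernstein plus $N\ge10$ for convergence).

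One small point: your stated bound $\|P_{k_1}N\|_{L^\infty}\lesssim 2^{\min(k_1,0)-(M-1)k_1^+}\|U\|_X^2$ is stronger than what your justification actually gives. The $W^{M,\infty}$ product estimate only yields $\|P_{k_1}N\|_{L^\infty}\lesssim 2^{-(M-1)k_1^+}\|U\|_X^2$; the extra $2^{k_1}$ factor for $k_1\le0$ holds for the $|\nabla|(v^2/2)$ piece but not for $\Lambda R_j(\rho v_j)$, whose symbol $\Lambda(\xi)\xi_j/|\xi|$ is order~$0$ at low frequency. This is harmless, since the $2^{k_1}$ already present from $q$ makes $\sum_{k_1\le0}2^{k_1}\|P_{k_1}N\|_{L^\infty}$ summable without any gain on $N$. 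The paper treats this sum instead via the Besov algebra estimate $\|(\rho v,v^2)\|_{B^9_{\infty,1}}\lesssim\|(\rho,v)\|_{B^9_{\infty,1}}^2\lesssim\|U\|_X^2$ (citing \cite{Tr}), which amounts to the same thing.
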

\begin{proof}
By Lemma \ref{1/F-Soo} and Lemma \ref{q-Soo}, $\|q/\Phi_{\mu\nu}\|_{S^\infty_{k_1,k_2;k_3}}\lesssim 2^{2Nk_3^++7k_1^++k_1}1_{|k_2-k_3|\le1}$,
so by Lemma \ref{paraprod},
\[
|I_Q^{\mu\nu}[P_{k_1}f_1,P_{k_2}f_2,P_{k_3}f_3]|
\lesssim 2^{2Nk_3^++7k_1^++k_1}1_{|k_2-k_3|\le1}
\|P_{k_1}f_1\|_{L^\infty}\|P_{k_2}f_2\|_{L^2}\|P_{k_3}f_3\|_{L^2}.
\]
Summing over $k_1\in\Z$ and $|k_2-k_3|\le1$ using the Cauchy--Schwarz inequality gives
\begin{equation}\label{Iq}
|I_Q^{\mu\nu}[f_1,f_2,f_3]|\lesssim \sum_{k\in\Z} 2^{7k^++k}\|P_kf_1\|_{L^\infty}\|f_2\|_{H^N}\|f_3\|_{H^N}.
\end{equation}
Put $f_1=U_\mu$, $f_2=\mathcal U_\nu$ and $f_3=\mathcal U$ in (\ref{Iq}).
By Lemma \ref{XZ-bound} (ii), Proposition \ref{E-HN} and $N\ge10$,
\[
|(\ref{Eq-IBP1})|\lesssim \|U(t)\|_{H^N}^3+\|U(0)\|_{H^N}^3
\lesssim \|U\|_{L^\infty([0,t])H^N}^3.
\]
Put $f_1=N_{\mu}$, $f_2=\mathcal U_\nu$, $f_3=\mathcal U$ in (\ref{Iq}).
The integrand in (\ref{Eq-IBP2}) is bounded by
\[
|I_Q^{\mu\nu}[N_\mu,\mathcal U_\nu,\mathcal U]|
\lesssim \sum_{k\in\Z} 2^{8k^++k}\|P_k(\rho v,v^2)\|_{L^\infty}\|\mathcal U\|_{H^N}^2
\lesssim \|(\rho v,v^2)\|_{B^9_{\infty,1}}\|U\|_{H^N}^2.
\]
Since $M\ge9$, by [Tr], Theorem 2 (i) and Lemma \ref{XZ-bound} (iii) we have
\[
\|(\rho v,v^2)\|_{B^9_{\infty,1}}
\lesssim \|(\rho,v)\|_{B^9_{\infty,1}}^2\lesssim \|U\|_X^2.
\]
Integrating in time gives
\[
|(\ref{Eq-IBP2})|\lesssim \|U\|_{L^2([0,t])X}^2\|U\|_{L^\infty([0,t])H^N}^2.
\]
\end{proof}

\begin{proposition}\label{Eq-IBP33}
If $\|U\|_{H^3}$ is sufficiently small then
\[
|(\ref{Eq-IBP3})|\lesssim \|U\|_{L^2([0,t])X}^2\|U\|_{L^\infty([0,t])H^N}^2.
\]
\end{proposition}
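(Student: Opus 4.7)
The plan is to substitute the evolution equation (\ref{cal-Ut}) into $(\mathcal{U}_\nu)_t+i\Lambda\mathcal{U}_\nu$ and $\mathcal{U}_t+i\Lambda\mathcal{U}$ appearing in (\ref{Eq-IBP3}), so that
\[
\mathcal{U}_t+i\Lambda\mathcal{U}=-iT_a\mathcal{U}+\mathcal{Q}+\mathcal{S}+\mathcal{C},\qquad a=(\sqrt{1+\rho}-1)\Lambda(\zeta)+v\cdot\zeta,
\]
and to estimate the resulting four families of contributions to $I_Q^{\mu\nu}$ separately.

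For the $\mathcal{Q}$, $\mathcal{S}$ and $\mathcal{C}$ contributions, the plan is to repeat the paraproduct and commutator bookkeeping from the proofs of Propositions \ref{E4} and \ref{Es} --- namely (\ref{PkS}), (\ref{P>0C}), and the combination of Lemmas \ref{Lmq=Lq}, \ref{prod-Lm}, \ref{Taf-Lp}, \ref{Eaf-Lp} together with Lemma \ref{XZ-bound}(iii) --- to obtain
\[
\|\mathcal{Q}\|_{H^N}+\|\mathcal{S}\|_{H^N}\lesssim \|U\|_X\|U\|_{H^N},\qquad \|\mathcal{C}\|_{H^N}\lesssim \|U\|_X^2\|U\|_{H^N}.
\]
Plugging each of these into the trilinear bound (\ref{Iq}) with $U_\mu$ in the $L^\infty$ slot and the nonlinear factor in one of the $H^N$ slots gives pointwise-in-time control by $\|U(s)\|_X^2\|U(s)\|_{H^N}^2$; the extra $\|U\|_X$ factor arising from $\mathcal{C}$ is absorbed using $\|U\|_X\lesssim\|U\|_{H^N}$ from Lemma \ref{XZ-bound}(ii) and the smallness of $\|U\|_{H^3}$. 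Integration in $s$ followed by the Cauchy--Schwarz inequality then produces a contribution of the required form.

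The main obstacle is the paradifferential contribution $I_Q^{\mu\nu}[U_\mu,-iT_a\mathcal{U}_\nu,\mathcal{U}]$ (and its symmetric partner), since the symbol $a$ has order one and a naive use of (\ref{Iq}) combined with Lemma \ref{Taf-Lp} would cost one derivative on $\mathcal{U}$. To handle this, the plan is to use the expansion (\ref{para-def}) to view the expression as a genuine 4-linear form in $U$, with combined multiplier equal to the product of $q(\xi_1,\xi_2)/\Phi_{\mu\nu}(\xi_1,\xi_2)$ and $\varphi_{\le-10}(|\xi_2-\eta|/|\xi_2+\eta|)\mathcal{F}_xa(\xi_2-\eta,(\xi_2+\eta)/2)$. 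Because $a$ is linear in $\rho$ and $v$ (up to an $O(\rho^2)$ correction that can be lumped into the cubic piece $\mathcal{C}$) and polynomial of degree at most one in $\zeta$, this 4-linear multiplier factorizes as $q/\Phi_{\mu\nu}$ times a low-frequency Fourier coefficient of $\rho$ or $v$ times a linear factor in $(\xi_2+\eta)/2$. An $S^\infty$ bound for the 4-linear multiplier, proved in the spirit of Lemma \ref{q-Soo} via Lemma \ref{Soo-Cn}, shows that the extra $|\xi_2|\approx|\eta|$ can be redistributed onto the high-frequency slots without exceeding total order $2N$, once one uses the $2^{k_1}$ gain from $q$ and the support restriction $|\xi_2-\eta|\ll|\xi_2|$ to ensure that the low-frequency structure is compatible with placing $\rho$ or $v$ in $L^\infty$.

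Applying Lemma \ref{paraprod} with the two low-frequency factors ($P_{k_1}U_\mu$ and the $\rho$ or $v$ arising from $a$) placed in $L^\infty$ (each controlled by $\|U\|_X$ via Lemma \ref{XZ-bound}(iii)) and the two high-frequency factors ($\mathcal{U}_\nu$ and $\mathcal{U}$) placed in $L^2$ (each controlled by $\|U\|_{H^N}$) produces exactly $\|U(s)\|_X^2\|U(s)\|_{H^N}^2$ pointwise in $s$. Integrating in time and applying the Cauchy--Schwarz inequality then combines all four families into the claimed bound
\[
|(\ref{Eq-IBP3})|\lesssim \|U\|_{L^2([0,t])X}^2\|U\|_{L^\infty([0,t])H^N}^2.
\]
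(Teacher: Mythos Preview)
Your treatment of the $\mathcal Q$, $\mathcal S$, $\mathcal C$ contributions is fine and matches the paper. The gap is in the paradifferential piece, where your ``redistribution'' argument does not close.

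Count the high-frequency order. By Lemma \ref{q-Soo} and Lemma \ref{1/F-Soo}, $q/\Phi_{\mu\nu}$ carries exactly $2^{2Nk_3^+}$ in the high-frequency slot; the factor $2^{k_1}$ you cite lives on the \emph{low}-frequency variable $\xi_1$ (where $k_1\le k_3-6$) and has already been spent in bringing $q$ down from order $2N+1$ to $2N$. The symbol $a=(\sqrt{1+\rho}-1)\Lambda(\zeta)+v\cdot\zeta$ contributes one more high-frequency factor $|(\xi_2+\eta)/2|\approx 2^{k_3}$, so the four-linear multiplier for $I_Q^{\mu\nu}[U_\mu,-iT_a\mathcal U_\nu,\mathcal U]$ alone is of order $2N+1$ in $k_3$. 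Two $H^N$ factors only absorb $2^{2Nk_3}$, leaving an unsummable $2^{k_3}$. There is no mechanism in your plan (neither the $2^{k_1}$ gain nor the support condition $|\xi_2-\eta|\ll|\xi_2|$) that transfers a high-frequency derivative to a low-frequency slot.

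What the paper actually does is exploit the \emph{cancellation} between the pair
\[
I_Q^{\mu\nu}[U_\mu,(iT_a\mathcal U)_\nu,\mathcal U]+I_Q^{\mu\nu}[U_\mu,\mathcal U_\nu,iT_a\mathcal U].
\]
For $a=v\cdot\zeta$, one observes that $iT_{v\cdot\zeta}$ commutes with complex conjugation, so the two terms combine into a single four-linear form whose multiplier is a \emph{difference} $r_{\mu,j}$ of two nearby evaluations of $q/\Phi_{\mu+}$; the fundamental theorem of calculus together with $\|\nabla_{\xi_2}q\|_{S^\infty}\lesssim 2^{(2N-1)k_3^+}$ recovers the lost derivative. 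For $a=(\sqrt{1+\rho}-1)\Lambda(\zeta)$ this cancellation works when $\nu=+$, while for $\nu=-$ one instead invokes Case~2.2 of Lemma \ref{1/F-Soo}, which gives $\|\Phi_{\mu-}^{-1}\|_{S^\infty_{k_1,k_2;k_3}}\lesssim 2^{-k_3^+}$ and directly absorbs the extra derivative. Your plan needs one of these two mechanisms; a term-by-term bound will not work.
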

\begin{proof}
By (\ref{cal-Ut}), the integrand of (\ref{Eq-IBP3}) becomes
\begin{align}
\label{Iq1}
&I_Q^{\mu\nu}[U_\mu,(\mathcal{Q+S+C})_\nu,\mathcal U]+I_Q^{\mu\nu}[U_\mu,U_\nu,\mathcal{Q+S+C}]\\
\label{Iq2}
-&I_Q^{\mu\nu}[U_\mu,(iT_{v\cdot\zeta}\mathcal U)_\nu,\mathcal U]
-I_Q^{\mu\nu}[U_\mu,\mathcal U_\nu,iT_{v\cdot\zeta}\mathcal U]\\
\label{Iq3}
-&I_Q^{\mu\nu}[U_\mu,(iT_{(\sqrt{1+\rho}-1)\Lambda(\zeta)}\mathcal U)_\nu,\mathcal U]
-I_Q^{\mu\nu}[U_\mu,\mathcal U_\nu,iT_{(\sqrt{1+\rho}-1)\Lambda(\zeta)}\mathcal U].
\end{align}
By (\ref{Iq}), (\ref{PkS}), (\ref{P>0C}), Lemma \ref{Sm-Lm}, Lemma \ref{Lmq=Lq}, Lemma \ref{prod-Lm}, Lemma \ref{Taf-Lp}, Lemma \ref{Eaf-Lp} and the fact that $\|(\rho,v)\|_{W^{1,\infty}}\lesssim \|(\rho,v)\|_{H^3}\lesssim \|U\|_{H^3}$ is sufficiently small,
\[
|(\ref{Iq1})|\lesssim \|U\|_X\|U\|_{H^N}\|\mathcal{Q+S+C}\|_{H^N}
\lesssim \|U\|_X^2\|U\|_{H^N}^2.
\]

For (\ref{Iq2}), by Lemma \ref{para2diff} (ii), the operator $iT_{v\cdot\zeta}$ maps real valued functions to real valued functions,
so $(iT_{v\cdot\zeta}\mathcal U)_\nu=iT_{v\cdot\zeta}U_\nu$.
Taking the complex conjugation on the third slot of $I_Q^{\mu\nu}$ into account we have
\[
-(\ref{Iq2})=\Re\frac{C^3}{R^6}\sum_{\xi,\eta,\theta\in(2\pi\Z/R)^2}
ir_{\mu,j}(\xi,\eta,\theta)\hat U_\mu(\xi-\eta-\theta)\mathcal{\hat U_\nu}(\eta)\overline{\mathcal{\hat U(\xi)}}\hat v_j(\theta),
\]
where
\begin{align*}
r_{\mu,j}(\xi,\eta,\theta)&=\frac{2\eta_j+\theta_j}2\times\frac{q_{k_1,k_2;k_3}(\xi-\eta-\theta,\eta+\theta)}{\Phi_{\mu+}(\xi-\eta-\theta,\eta+\theta)}\varphi_{\le-10}\left( \frac{|\theta|}{|2\eta+\theta|} \right)\\
&-\frac{2\xi_j-\theta_j}2\times\frac{q_{k_1,k_2;k_3}(\xi-\eta-\theta,\eta)}{\Phi_{\mu+}(\xi-\eta-\theta,\eta)}\varphi_{\le-10}\left( \frac{|\theta|}{|2\xi-\theta|} \right),\\
q_{k_1,k_2;k_3}(\xi_1,\xi_2)&=n(\xi_1,\xi_2)\varphi_{k_1}(\xi_1)\varphi_{k_2}(\xi_2)\varphi_{k_3}(\xi_1+\xi_2).
\end{align*}

Since
\begin{align*}
\left( \partial_{\xi_2}\Phi_{\mu+}^{-1} \right)(\xi-\eta-\theta,\eta+t\theta)
&=\frac{\nabla\Lambda(\xi-\theta+t\theta)-\nabla\Lambda(\eta+t\theta)}
{\Phi(\xi-\eta-\theta,\eta+t\theta)^2}\\
&=\frac{\int_0^1 (\xi-\eta-\theta)\cdot\nabla^2\Lambda(\eta+s(\xi-\eta-\theta)+t\theta)ds}
{\Phi(\xi-\eta-\theta,\eta+t\theta)^2},
\end{align*}
by Lemma \ref{Soo-Cn} (ii), for $k_1\le k_2-6$ and $k_3=k_2+O(1)$ we have
\[
\|\partial_{\xi_2}\left( \Phi_{\mu+}^{-1} \right)(\xi-\eta-\theta,\eta+t\theta)\|_{S^\infty_{k_1,k_2;k_3}}\lesssim 2^{8k_1^++k_1-k_3^+}.
\]
Using the fundamental theorem of calculus as in Lemma \ref{q-Soo} we obtain
\[
\|r_{\mu,j}(\xi,\eta,\theta)\varphi_{k_4}(\theta)\|_{S^\infty}\lesssim 2^{2Nk_3^++10k_1^++k_4^+}1_{k_1,k_4\le k_3-5}.
\]
Using Lemma \ref{paraprod} and Lemma \ref{XZ-bound} (iii) and summing over $k_1,k_4\le k_3-5$ and $k_2=k_3+O(1)$ give
\[
|(\ref{Iq2})|\lesssim \|U\|_X^2\|U\|_{H^N}^2.
\]

For (\ref{Iq3}) we distinguish two cases.

{\bf Case 1:} $\nu=+$. Then we can omit the subscript $\nu$ and get the same cancellation as in (\ref{Iq2}) from the complex conjugation in the definition of $I_Q^{\mu+}$, so by Lemma \ref{paraprod},
\[
|(\ref{Iq3})|\lesssim \|U\|_X^2\|U\|_{H^N}^2.
\]

{\bf Case 2:} $\nu=-$. On the support of $q$ we have $k_1\le k_2-6$ and $k_3\ge-1$, so Case 2.2 of Lemma \ref{1/F-Soo} gives
\[
\|\Phi_{\mu\nu}^{-1}\|_{S^\infty_{k_1,k_2;k_3}}\lesssim 2^{-k_3},
\text{ so }\|q/\Phi_{\mu\nu}\|_{S^\infty_{k_1,k_2;k_3}}\lesssim 2^{(2N-1)k_3+k_1}.
\]
This can be used to obtain the desired bound by recovering the loss of derivative in $T_{(\sqrt{1+\rho}-1)\Lambda(\zeta)}$:
\[
\|T_{(\sqrt{1+\rho}-1)\Lambda(\zeta)}\mathcal U\|_{H^{N-1}}\lesssim \|U\|_X\|U\|_{H^N}.
\]
Then Lemma \ref{paraprod} gives
\[
|(\ref{Iq3})|\lesssim \|U\|_X^2\|U\|_{H^N}^2.
\]

Combining the three bounds and integrating in time show the claim.
\end{proof}

\subsection{Quartic energy estimates}
\begin{proof}[Proof of (\ref{growth-Em-X})]
By (\ref{HN0}) and conservation of the energy $E$ (\ref{E-def}) we know that $\sup_{[0,t]}E\lesssim\ep^2$. By Proposition \ref{E-HN}, (\ref{HN0}) and (\ref{growthX1}) we know that $\mathcal E(0)\lesssim\ep^2$ and $\|P_{\ge0}U(t)\|_{H^N}^2\le\mathcal E(t)+\ep_1^3$. By Proposition \ref{E4}, Proposition \ref{Es} and Proposition \ref{Eq} we have
\[
\mathcal E(t)=\mathcal E(0)+O(\ep_1^3+\ep_1^2\ep_2^2)\lesssim\ep^2+\ep_1^3+\ep_1^2\ep_2^2.
\]
Then the same bound holds for $\|P_{\ge0}U(t)\|_{H^N}^2$, and
\[
\|U(t)\|_{H^N}^2\lesssim \|P_{\ge0}U(t)\|_{H^N}^2+E(t)\lesssim\ep^2+\ep_1^3+\ep_1^2\ep_2^2.
\]
Taking the square root gives (\ref{growth-Em-X}).
\end{proof}

\begin{proof}[Proof of (\ref{growth-Em-Z})]
Integrate Lemma \ref{dispersiveZ} (ii) in $t$ and put it in (\ref{growth-Em-X}).
\end{proof}

\section{Strichartz estimates for small $R$}\label{StrEst}
\subsection{Definition of the profile}
The evolution equation (\ref{Ut}) for $U$ can be rewritten as
\begin{align*}
(U_\sigma)_t+i\Lambda U_\sigma
&=\sum_{\mu,\nu=\pm} N^\sigma_{\mu\nu}[U_\mu,U_\nu], &
\sigma&\in\{+,-\}.
\end{align*}
Define the profile $V_\pm(t)=e^{\pm it\Lambda}U_\pm(t)$.
Then the evolution equation for $V=V_+$ is
\begin{equation}\label{dtV}
\hat V_t(\xi)=\frac{C}{R^2}\sum_{\mu,\nu=\pm} \sum_{\xi_1,\xi_2\in(2\pi\Z/R)^2\atop\xi_1+\xi_2=\xi} e^{it\Phi_{\mu\nu}(\xi_1,\xi_2)} m_{\mu\nu}(\xi_1,\xi_2)\hat V_\mu(\xi_1)\hat V_\nu(\xi_2),
\end{equation}
where $\Phi_{\mu\nu}$ is defined in (\ref{F-def}), and $m_{\mu\nu}$ are sums of terms of the form $a_0a_1$,
\begin{equation}\label{m-prod}
a_0\in\{\Lambda(\xi_1+\xi_2)|\xi_1|/\Lambda(\xi_1),
\Lambda(\xi_1+\xi_2)|\xi_2|/\Lambda(\xi_2), |\xi_1+\xi_2|\}
\end{equation}
and $a_1$ is a Calderon--Zygmund operator on $(\R^2)^2$.
By Lemma \ref{XZ-bound} (v) we can assume $a_1=1$.

Thanks to Lemma \ref{1/F-Soo}, we can integrate (\ref{dtV}) by parts in $t$ to get
\begin{align}
\label{V=W+H}
V(t)-V(0)&=\sum_{\mu,\nu=\pm} \left( W_{\mu\nu}(t)-W_{\mu\nu}(0)
-\sum_{\mu,\nu,\rho=\pm} \int_0^t H_{\mu\nu\rho}(s)ds \right),\\
\label{W2-V}
\mathcal FW_{\mu\nu}(\xi,t)&=\frac{C}{R^2}\sum_{\xi_1,\xi_2\in(2\pi\Z/R)^2\atop\xi_1+\xi_2=\xi}e^{it\Phi_{\mu\nu}(\xi_1,\xi_2)}\frac{m_{\mu\nu}(\xi_1,\xi_2)}{i\Phi_{\mu\nu}(\xi_1,\xi_2)}\hat V_\mu(\xi_1,t)\hat V_\nu(\xi_2,t),\\
\label{H3-V}
\mathcal FH_{\mu\nu\rho}(\xi,t)&=\frac{C^2}{R^4}\sum_{\xi_1,\xi_2,\xi_3\in(2\pi\Z/R)^2\atop\xi_1+\xi_2+\xi_3=\xi} e^{it\Phi_{\mu\nu}(\xi_1,\xi_2)}m_{\mu\nu\rho}(\xi_1,\xi_2,\xi_3)\hat V_\mu(\xi_1,t)\hat V_\nu(\xi_2,t)\hat V_\rho(\xi_3,t),
\end{align}
where $m_{\mu\nu\rho}$ is a linear combination of multipliers of the form $m_{\mu\sigma}m_{\nu\rho}/\Phi_{\mu\sigma}$ or $m_{\sigma\rho}m_{\mu\nu}/\Phi_{\sigma\rho}$, see (\ref{H3k}).
We need to bound the $S^\infty$ norms of such multipliers.

\begin{lemma}
For $k_1$, $k_2$, $k_3\in\Z$ we have
\begin{equation}\label{m/F-Soo}
\|m_{\mu\nu}/\Phi_{\mu\nu}\|_{S^\infty_{k_1,k_2;k_3}}\lesssim 2^{\max k_j+7\min k_j^+}.
\end{equation}
If $m_{\mu\nu\rho}=m_{\mu\sigma}m_{\nu\rho}/\Phi_{\mu\sigma}$,
then for $k$, $k_j$ and $l\in\Z$ we have
\begin{equation}\label{mk3-Soo}
\|\varphi_l(\xi_2+\xi_3)m_{\mu\nu\rho}(\xi_1,\xi_2,\xi_3)\|_{S^\infty_{k_1,k_2,k_3;k}}\lesssim 2^{2\max k_j+7\min(k_1,l)^+}
\lesssim 2^{2\max k_j+7\operatorname{med} k_j^+}.
\end{equation}
Similar bounds hold if $m_{\mu\nu\rho}=m_{\sigma\rho}m_{\mu\nu}/\Phi_{\sigma\rho}$.
\end{lemma}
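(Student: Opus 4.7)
The plan is to derive both bounds by combining Lemma \ref{1/F-Soo}, the submultiplicativity of the $S^\infty$ norm (Lemma \ref{Soo-Cn}(i)), and a size estimate on $m_{\mu\nu}$ at fixed frequencies.

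First I would reduce (\ref{m/F-Soo}) to the size bound $\|m_{\mu\nu}\|_{S^\infty_{k_1,k_2;k_3}}\lesssim 2^{\max k_j}$, by writing $m_{\mu\nu}/\Phi_{\mu\nu}$ with cutoffs as the product of $m_{\mu\nu}$ with fattened cutoffs times $1/\Phi_{\mu\nu}$ with the original cutoffs, and then invoking Lemma \ref{Soo-Cn}(i) and Lemma \ref{1/F-Soo}. To prove the size bound I would use Lemma \ref{Soo-Cn}(i) again to split $m_{\mu\nu}$ into pieces each depending on a single localized variable: for $a_0=|\xi_1+\xi_2|$ the factor $|\cdot|\varphi_{k_3}$ contributes $\lesssim 2^{k_3}$ (a one-variable smooth-bump estimate) while $\varphi_{k_1}\otimes\varphi_{k_2}$ is $O(1)$; for $a_0=\Lambda(\xi_1+\xi_2)|\xi_j|/\Lambda(\xi_j)$ the pieces contribute $2^{k_3^+}$, $2^{\min(k_j,0)}$ and $O(1)$. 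A short case check, exploiting that the convolution constraint $\xi_1+\xi_2=$ output variable forces the two largest among $k_1,k_2,k_3$ to lie within $O(1)$ of each other, confirms that $2^{k_3^++\min(k_j,0)}\lesssim 2^{\max k_j}$ in every configuration.

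For (\ref{mk3-Soo}) I would write
\[
m_{\mu\nu\rho}(\xi_1,\xi_2,\xi_3)=\frac{m_{\mu\sigma}(\xi_1,\xi_2+\xi_3)}{\Phi_{\mu\sigma}(\xi_1,\xi_2+\xi_3)}\cdot m_{\nu\rho}(\xi_2,\xi_3),
\]
distribute the cutoff $\varphi_l(\xi_2+\xi_3)$ between the two factors (duplicating via a fattened bump on $\supp\varphi_l$), and apply Lemma \ref{Soo-Cn}(i) to the resulting product of multipliers on $(\R^2)^3$. The first factor is controlled by (\ref{m/F-Soo}) at input/output frequencies $(k_1,l;k)$, giving $2^{\max(k_1,l,k)+7\min(k_1,l,k)^+}$, and the second by the size bound at $(k_2,k_3;l)$, giving $2^{\max(k_2,k_3,l)}$. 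The support geometry yields $k\le\max k_j+O(1)$ (since $\xi_1+\xi_2+\xi_3$ is a sum of the three) and $l\le\max(k_2,k_3)+O(1)$, collapsing both maxes to $\max k_j+O(1)$ and leaving $2^{2\max k_j+7\min(k_1,l,k)^+}$. Using $\min(k_1,l,k)^+\le\min(k_1,l)^+$ gives the first form. For the second form I would invoke the elementary identity $\min(k_1,\max(k_2,k_3))=\operatorname{med}(k_1,k_2,k_3)$ (verified by case split on the ordering of $k_1$ relative to $k_2,k_3$), combined with monotonicity of $(\cdot)^+$. The symmetric representation $m_{\mu\nu\rho}=m_{\sigma\rho}m_{\mu\nu}/\Phi_{\sigma\rho}$ is handled identically after relabelling.

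The main obstacle I anticipate is the size bound $\|m_{\mu\nu}\|_{S^\infty_{k_1,k_2;k_3}}\lesssim 2^{\max k_j}$: the target exponent carries no $(\cdot)^+$, while the natural estimates for $\Lambda(\xi_1+\xi_2)$ and $|\xi_j|/\Lambda(\xi_j)$ do. The mismatch has to be absorbed through the frequency-support geometry, and keeping track of which of $k_1,k_2,k_3$ dominates in each configuration is the only step that requires genuine care. Everything else is routine once Lemma \ref{1/F-Soo} and Lemma \ref{Soo-Cn} are available.
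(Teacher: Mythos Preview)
Your approach matches the paper's: combine Lemma \ref{Soo-Cn}(i), Lemma \ref{1/F-Soo}, and the size bound $\|m_{\mu\nu}\|_{S^\infty_{k_1,k_2;k_3}}\lesssim 2^{\max k_j}$ coming from (\ref{m-prod}), then for the second inequality in (\ref{mk3-Soo}) use the frequency-support constraint relating $l$ to $k_2,k_3$. There is one slip worth fixing: the claimed ``identity'' $\min(k_1,\max(k_2,k_3))=\operatorname{med}(k_1,k_2,k_3)$ is false (take $k_1<k_2<k_3$, where the left side is $k_1$ but the median is $k_2$); what is true, and sufficient, is the inequality $\min(k_1,\max(k_2,k_3))\le\operatorname{med}(k_1,k_2,k_3)$, which is exactly the paper's observation that at least two of the $k_j$ are $\ge\min(k_1,l)-O(1)$.
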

\begin{proof}
(\ref{m/F-Soo}) and the first bound of (\ref{mk3-Soo}) follow from (\ref{m-prod}), Lemma \ref{Soo-Cn} and Lemma \ref{1/F-Soo}.
The second bound of (\ref{mk3-Soo}) follows from the fact that at least two of $k_j\ge\min(k_1,l)-O(1)$.
\end{proof}

\subsection{Strichartz Estimates}
In this subsection we show the Strichartz estimate (\ref{growthX2}).
\begin{proposition}\label{U-X}
Assume $N\ge M+5$ and (\ref{growthX1}). Then
\[
\|U\|_{L^2([0.t])X}\lesssim \mathcal L_R\sqrt{1+t/R}\ep_1(1+\mathcal L_R^{3/2}\ep_2^2).
\]
\end{proposition}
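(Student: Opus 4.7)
The plan is to apply the normal form identity (\ref{V=W+H}) to $V(t)$ and convert back to $U(t) = e^{-it\Lambda}V(t)$, giving
\[
U(t) = e^{-it\Lambda}V(0) + \sum_{\mu,\nu}e^{-it\Lambda}\bigl(W_{\mu\nu}(t) - W_{\mu\nu}(0)\bigr) - \sum_{\mu,\nu,\rho}\int_0^t e^{-it\Lambda}H_{\mu\nu\rho}(s)\,ds,
\]
and to bound each piece in $L^2_s([0,t])X$. Lemma \ref{dispersiveTT*} immediately handles the linear term, contributing $\lesssim \mathcal{L}_R\sqrt{1+t/R}\,\epsilon$ by (\ref{HN0}).

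For the bilinear boundary piece, I would use $\hat V_j(\xi_j) = e^{ij s\Lambda(\xi_j)}\hat U_j(\xi_j)$ to rewrite $e^{-is\Lambda}W_{\mu\nu}(s)$ as a bilinear paraproduct of $U_\mu(s), U_\nu(s)$ with the time-independent symbol $m_{\mu\nu}/(i\Phi_{\mu\nu})$. Combining Lemma \ref{XZ-bound}(ii), Lemma \ref{paraprod}, the $S^\infty$ bound (\ref{m/F-Soo}), and placing the lower-frequency factor in $L^\infty$ (controlled by $\|U\|_X$ via Lemma \ref{XZ-bound}(iii)) and the higher-frequency factor in $L^2$ (via $\|U\|_{H^N}$), I would obtain the pointwise-in-$s$ bound
\[
\|W_{\mu\nu}(s)\|_{H^{M+2}} \lesssim \sqrt{\mathcal L_R}\,\|U(s)\|_X\|U(s)\|_{H^N},
\]
where the $\sqrt{\mathcal L_R}$ loss comes from summing in $\ell^2$ over the $O(\mathcal L_R)$ negative output-frequency dyadic levels that appear in the high-high-to-low interaction (recall that $P_{<-\mathcal L_R}=0$ on the torus). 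Hölder in time gives $\|e^{-is\Lambda}W_{\mu\nu}(s)\|_{L^2_sX}\lesssim \sqrt{\mathcal L_R}\,\epsilon_1\epsilon_2$, while the $W_{\mu\nu}(0)$ boundary piece contributes $\lesssim \mathcal L_R^{3/2}\sqrt{1+t/R}\,\epsilon^2$ by Lemma \ref{dispersiveTT*}.

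For the cubic integral I would rewrite $\int_0^t e^{-it\Lambda}H(s)\,ds = \int_0^t e^{-i(t-s)\Lambda}(e^{-is\Lambda}H(s))\,ds$ and apply Minkowski in $s$ together with Lemma \ref{dispersiveTT*} slice-by-slice (using unitarity of $e^{-is\Lambda}$ on $H^{M+2}$), reducing matters to
\[
\Bigl\|\int_0^t e^{-it\Lambda}H(s)\,ds\Bigr\|_{L^2_tX} \lesssim \mathcal L_R\sqrt{1+t/R}\,\|H\|_{L^1_sH^{M+2}}.
\]
The crucial trilinear estimate is
\[
\|H_{\mu\nu\rho}(s)\|_{H^{M+2}} \lesssim \mathcal L_R^{3/2}\,\|U(s)\|_X^2\,\|U(s)\|_{H^N},
\]
which I would prove by applying Lemma \ref{paraprod} to (\ref{H3-V}) with the trilinear $S^\infty$ bound (\ref{mk3-Soo}), distributing two factors in $L^\infty$ and one in $L^2$, after absorbing the phase $e^{is\Phi_{\mu\nu}}$ into the $V$-to-$U$ substitutions so that only the static symbol $m_{\mu\nu\rho}$ remains in the multiplier. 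Hölder in time then yields $\|H\|_{L^1_sH^{M+2}}\lesssim \mathcal L_R^{3/2}\epsilon_1\epsilon_2^2$, so the cubic piece contributes $\lesssim \mathcal L_R^{5/2}\sqrt{1+t/R}\,\epsilon_1\epsilon_2^2$. Summing all pieces (using $\epsilon\lesssim\epsilon_1$ and $\epsilon_2\lesssim 1$) gives the stated bound.

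The main obstacle is the trilinear $H^{M+2}$ estimate on $H_{\mu\nu\rho}$: the oscillating factor $e^{is\Phi_{\mu\nu}}$ has $S^\infty$ norm growing polynomially in $s$, which would destroy the estimate if treated naively via Lemma \ref{Soo-Cn}(i). The fix is to exploit the identity $e^{is\Phi_{\mu\nu}(\xi_1,\xi_2)}\hat V_\mu(\xi_1)\hat V_\nu(\xi_2) = e^{is\Lambda(\xi_1+\xi_2)}\hat U_\mu(\xi_1)\hat U_\nu(\xi_2)$ so that the remaining phase depends only on $\xi_1+\xi_2$ and can be absorbed into a bilinear sub-operator, letting Lemma \ref{paraprod} apply with the time-independent symbol $m_{\mu\nu\rho}$. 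Careful bookkeeping of the $\sqrt{\mathcal L_R}$ losses arising from each high-high-to-low sub-interaction then produces the aggregate factor $\mathcal L_R^{3/2}$.
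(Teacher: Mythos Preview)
Your proposal is correct and follows essentially the same strategy as the paper: normal form decomposition, Lemma~\ref{dispersiveTT*} for the linear piece, direct paraproduct bounds for $W_{\mu\nu}$, and Minkowski plus Lemma~\ref{dispersiveTT*} plus the trilinear $H^{M+2}$ estimate for the cubic bulk. Two minor points of execution differ. First, for $e^{-is\Lambda}W_{\mu\nu}(s)$ the paper bounds the $X$ norm \emph{directly} via Bernstein $\|P_k\cdot\|_{L^\infty}\lesssim 2^k\|P_k\cdot\|_{L^2}$ together with Lemma~\ref{paraprod} and (\ref{m/F-Soo}); the extra factor $2^k$ makes the sum over low output frequencies convergent, so no $\sqrt{\mathcal L_R}$ loss arises there (your route through $H^{M+2}$ is acceptable but slightly wasteful). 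Second, the $\mathcal L_R^{3/2}$ in the cubic estimate does not come from three separate $\sqrt{\mathcal L_R}$ losses: in the paper it is one full $\mathcal L_R$ from summing over the intermediate frequency parameter $l$ in the decomposition (\ref{H3k}) (this localization is forced because the trilinear symbol $m_{\mu\nu\rho}$ depends on $\xi_2+\xi_3$ and need not have finite $S^\infty$ norm without it), and one $\sqrt{\mathcal L_R}$ from the $\ell^2$ sum over output frequencies $-\mathcal L_R\le k\le 0$. Your identification of the phase-absorption trick ($V\to U$ so that only $e^{is\Lambda(\xi)}$ remains, which is a harmless Fourier multiplier) is exactly right and is the key point.
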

\begin{proof}
From (\ref{V=W+H}) it follows that
\[
U(t)=e^{-it\Lambda}\left( U(0)+\sum_{\mu,\nu=\pm} (W_{\mu\nu}(t)-W_{\mu\nu}(0))+\int_0^t \sum_{\mu\nu\rho} H_{\mu\nu\rho}(s)ds \right).
\]

{\bf Part 1:} The linear term.
By Lemma \ref{dispersiveTT*} and (\ref{growthX1}),
\[
\|e^{-is\Lambda}U(0)\|_{L^2([0,t])X}\lesssim \mathcal L_R\sqrt{1+t/R}\ep_1.
\]

{\bf Part 2:} The quadratic boundary terms. We rewrite (\ref{W2-V}) as
\[
e^{-it\Lambda(\xi)}\mathcal FW_{\mu\nu}(\xi,t)
=\frac{C}{R^2}\sum_{\xi_1,\xi_2\in(2\pi\Z/R)^2\atop\xi_1+\xi_2=\xi}
\frac{m_{\mu\nu}(\xi_1,\xi_2)}{i\Phi_{\mu\nu}(\xi_1,\xi_2)}\hat U_\mu(\xi_1,t)\hat U_\nu(\xi_2,t).
\]
We view $N^\sigma_{\mu\nu}$ and $W_{\mu\nu}$ as bilinear forms of $V_\mu$ and $V_\nu$, and decompose
\begin{align}\label{W2k}
W_{\mu\nu}&=\sum_{k_1,k_2\in\Z} W^{\mu\nu}_{k_1,k_2}, &
W^{\mu\nu}_{k_1,k_2}&=W_{\mu\nu}[P_{k_1}V_\mu,P_{k_2}V_\nu].
\end{align}
By symmetry we can assume $k_1\le k_2$. Since by (\ref{m-prod}) and (\ref{1/F-C0}),
\begin{equation}\label{m/F-C0}
|(m_{\mu\nu}/\Phi_{\mu\nu})(\xi_1,\xi_2)|\lesssim (|\xi_1|+|\xi_2|)(1+\min(|\xi_1|,|\xi_2|,|\xi_1+\xi_2|)),
\end{equation}
we have
\begin{align}
\nonumber
\|W^{\mu\nu}_{k_1,k_2}\|_{L^2}
&\lesssim 2^{k_2+k_1^+}\|\varphi_{k_1}\mathcal FU\|_{L^1}\|\varphi_{k_2}\mathcal FU\|_{L^2}\\
\label{WL2}
&\lesssim 2^{-Nk_2^++k_2+k_1^++k_1}\|P_{k_1}U\|_{L^2}\|P_{k_2}U\|_{H^N},\\
\nonumber
\|P_kW^{\mu\nu}_{k_1,k_2}\|_{L^2}
&\lesssim 2^k\|\mathcal FW^{\mu\nu}_{k_1,k_2}\|_{L^\infty}
\lesssim 2^{k_2+k_1^++k}\|P_{k_1}U\|_{L^2}\|P_{k_2}U\|_{L^2}\\
&\lesssim 2^{-Nk_2^++k_2+k_1^++k}\|P_{k_1}U\|_{L^2}\|P_{k_2}U\|_{H^N}.
\label{WL22}
\end{align}
Since $\mathcal FW^{\mu\nu}_{k_1,k_2}$ is supported on the ball $B(0,O(2^{k_2}))$, by (\ref{growthX1}) and (\ref{WL2}) we have
\[
\|W^{\mu\nu}_{k_1,k_2}\|_{H^{M+2}}\lesssim 2^{(M+2-N)k_2^++k_2-|k_1|}\ep_1^2.
\]
Summing over $k_1,k_2\in\Z$ gives $\|W_{\mu\nu}\|_{H^{M+2}}\lesssim \ep_1^2$. Then by Lemma \ref{dispersiveTT*},
\[
\|e^{-is\Lambda}W_{\mu\nu}(0)\|_{L^2([0,t])X}
\lesssim \mathcal L_R\sqrt{1+t/R}\ep_1^2.
\]

By the Bernstein inequality, Lemma \ref{paraprod} ($p_1=\infty$, $p_2=2$) and (\ref{m/F-Soo}),
\[
\|P_ke^{-is\Lambda}W^{\mu\nu}_{k_1,k_2}(s)\|_{L^\infty}
\lesssim 2^k\|P_kW^{\mu\nu}_{k_1,k_2}(s)\|_{L^2}
\lesssim 2^{k+k_2+7k_1^+}\|P_{k_1}U(s)\|_{L^\infty}\|P_{k_2}U(s)\|_{L^2}.
\]
Again using the support of $\mathcal FW^{\mu\nu}_{k_1,k_2}$ and
(\ref{growthX1}) we get $\|P_ke^{-is\Lambda}W_{\mu\nu}(s)\|_{L^\infty}\lesssim 2^{k+(1-N)k^+}\|U(s)\|_X\ep_1$. Then $\|e^{-is\Lambda}W_{\mu\nu}(s)\|_X\lesssim \|U(s)\|_X\ep_1$. Taking the $L^2$ norm in $t$ gives
\[
\|e^{-is\Lambda}W_{\mu\nu}(s)\|_{L^2([0,t])X}\lesssim \ep_1\ep_2.
\]

{\bf Part 3:} The cubic bulk terms. We write (\ref{H3-V}) as
\[
\mathcal FH_{\mu\nu\rho}(\xi,t)=\frac{C^2}{R^4}e^{it\Lambda(\xi)}
\sum_{\xi_1,\xi_2,\xi_3\in(2\pi\Z/R)^2\atop\xi_1+\xi_2+\xi_3=\xi} m_{\mu\nu\rho}(\xi_1,\xi_2,\xi_3)\hat U_\mu(\xi_1,t)\hat U_\nu(\xi_2,t)\hat U_\rho(\xi_3,t),
\]
view it as a trilinear form $H_{\mu\nu\rho}=H_{\mu\nu\rho}[V_\mu,V_\nu,V_\rho]$ and decompose
\begin{equation}\label{H3k}
\begin{aligned}
H_{\mu\nu\rho}&=\sum_{l\in\Z,\sigma=\pm} H^{\mu\nu\rho\sigma}_l,\\
H^{\mu\nu\rho\sigma}_l[V_\mu,V_\nu,V_\rho](t)
&=W_{\mu\sigma}[V_\mu,P_le^{it\sigma\Lambda}N_{\nu\rho}^\sigma[V_\nu,V_\rho]]\\
&+W_{\sigma\rho}[P_le^{it\sigma\Lambda}N_{\mu\nu}^\sigma[V_\mu,V_\nu],V_\rho]],\\
H^{\mu\nu\rho\sigma}_l&=\sum_{k_1,k_2,k_3\in\Z} H^{\mu\nu\rho\sigma}_{k_1,k_2,k_3,l},\\
H^{\mu\nu\rho\sigma}_{k_1,k_2,k_3,l}
&=H^{\mu\nu\rho\sigma}_l[P_{k_1}V_\mu,P_{k_2}V_\nu,P_{k_3}V_\rho].
\end{aligned}
\end{equation}
Assume $k_1\le k_2\le k_3$. By Lemma \ref{paraprod} ($p_1=p_2=\infty$, $p_3=2$) and (\ref{mk3-Soo}),
\[
\|P_kH^{\mu\nu\rho\sigma}_{k_1,k_2,k_3,l}\|_{L^2}
\lesssim 2^{2k_3+7k_2^+}\|P_{k_1}U\|_{L^\infty}\|P_{k_2}U\|_{L^\infty}\|P_{k_3}U\|_{L^2}.
\]
Summing over $k_1,k_2\in\Z$, $-\mathcal L_R\le l\le k_3+O(1)$ and $k_3\ge k-O(1)$ we get
\[
\|P_kH_{\mu\nu\rho}(t)\|_{L^2}\lesssim (\mathcal L_R+k^++1)2^{(2-N)k^+}\|U(t)\|_X^2\|U(t)\|_{H^N}.
\]
Taking an $\ell^2$ sum in $k\ge-\mathcal L_R$ and using (\ref{growthX1}) and $N\ge M+5$ we get
\[
\|H_{\mu\nu\rho}(t)\|_{H^{M+2}}\lesssim \mathcal L_R^{3/2}\|U(t)\|_X^2\ep_1.
\]
By Lemma \ref{dispersiveTT*} and Lemma \ref{XZ-bound} (ii) we then have
\begin{align*}
\left\| e^{-is\Lambda} \int_0^s H_{\mu\nu\rho}(s')ds' \right\|_{L^2_s([0,t])X}
&\le\int_0^t \|e^{-is\Lambda}H_{\mu\nu\rho}(s')\|_{L^2_s([s',t])X}ds'\\
&\lesssim \mathcal L_R\sqrt{1+t/R}\|H_{\mu\nu\rho}\|_{L^1([0,t])H^{M+2}}\\
&\lesssim \mathcal L_R^{5/2}\sqrt{1+t/R}\ep_1\ep_2^2.
\end{align*}

Combining Part 1 through Part 3 shows the claim.
Note that Part 2 is dominated by Part 1 and Part 3.
\end{proof}

For future use we also need to bound $H^{\mu\nu\rho\sigma}_l$ for small $l$.
\begin{proposition}\label{PkH-L2-l-small}
Assume $N\ge M+3$ and (\ref{growthX1}). Then for $l\le0$ we have
\[
\|H^{\mu\nu\rho\sigma}_l\|_{H^{M+2}}\lesssim 2^{2l}\ep_1^3.
\]
\end{proposition}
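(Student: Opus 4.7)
The plan is to split $H^{\mu\nu\rho\sigma}_l$ into its two defining summands, focus on the first, $W_{\mu\sigma}[V_\mu, P_l e^{it\sigma\Lambda}N^\sigma_{\nu\rho}[V_\nu, V_\rho]]$ (the second is symmetric), and extract the $2^{2l}$ as the product of two independent $2^l$ gains. The first gain is the torus Bernstein inequality, valid once $l\ge-\log R$: $\|P_l h\|_{L^\infty}\lesssim 2^l\|P_l h\|_{L^2}$. The second comes from a low-frequency bound $\|P_l N^\sigma_{\nu\rho}[V_\nu, V_\rho]\|_{L^2}\lesssim 2^l\ep_1^2$ on the inner bilinear nonlinearity, which I will obtain by peeling off the outer $\Lambda R_j$ or $|\nabla|$ in $N^\sigma$ (these yield, respectively, a bounded Calder\'on--Zygmund operator and the factor $2^l$ at output frequency $2^l$), applying Bernstein $\|P_l(fg)\|_{L^2}\lesssim 2^l\|fg\|_{L^1}$ on the remaining scalar product, and H\"older $\|fg\|_{L^1}\lesssim\|f\|_{L^2}\|g\|_{L^2}\lesssim\ep_1^2$. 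Combining these, and using that $P_l$ commutes with $e^{it\sigma\Lambda}$ and that the latter is an $L^2$ isometry, yields $\|P_l e^{it\sigma\Lambda}N^\sigma_{\nu\rho}[V_\nu,V_\rho]\|_{L^\infty}\lesssim 2^{2l}\ep_1^2$.

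For the outer paraproduct I will absorb the $e^{it\Phi_{\mu\sigma}}$ phase of $W_{\mu\sigma}$ and write
\[
W_{\mu\sigma}[V_\mu, P_l e^{it\sigma\Lambda}G]=e^{it\Lambda}\tilde W[U_\mu, P_l G],
\]
where $\tilde W$ is the pure bilinear Fourier multiplier of symbol $m_{\mu\sigma}/(i\Phi_{\mu\sigma})$ and $G=N^\sigma_{\nu\rho}[V_\nu,V_\rho]$. Since $e^{it\Lambda}$ is an $H^{M+2}$ isometry it suffices to control $\tilde W$. Because $l\le 0$ forces $\min(k_1,l,k)^+=0$, the bound (\ref{m/F-Soo}) combined with $k\le k_1^++O(1)$ (which holds because $|\eta|\le 1$ on $\supp\varphi_l$) gives $\|m_{\mu\sigma}/\Phi_{\mu\sigma}\|_{S^\infty_{k_1,l;k}}\lesssim 2^{k_1^+}$. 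Lemma \ref{paraprod} with exponents $(2,\infty)$ then yields
\[
\|P_k\tilde W[P_{k_1}U_\mu, P_l G]\|_{L^2}\lesssim 2^{k_1^++2l}\ep_1^2\|P_{k_1}U_\mu\|_{L^2}.
\]
For $k\ge 1$ only $k_1=k+O(1)$ contributes, so
\[
\sum_{k\ge 1}2^{2(M+2)k}\|P_k\tilde W\|_{L^2}^2
\lesssim 2^{4l}\ep_1^4\sum_{k\ge 1}2^{2(M+3)k}\|P_k U_\mu\|_{L^2}^2
\lesssim 2^{4l}\ep_1^4\|U_\mu\|_{H^{M+3}}^2\lesssim 2^{4l}\ep_1^6,
\]
where the second inequality uses $N\ge M+3$ and interprets the weighted square-sum as (part of) the $H^{M+3}$ norm. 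The low-frequency contribution $k\le 0$ is handled analogously by a direct estimate of $\|P_{\le 0}\tilde W\|_{L^2}$, using the refined bound $\|m_{\mu\sigma}/\Phi_{\mu\sigma}\|_{S^\infty_{k_1,l;k}}\lesssim 2^{\max(k_1,l)}$ for $k_1\le 0$ to make the dyadic sum in $k_1$ converge. Together these give $\|W_{\mu\sigma}[V_\mu, P_l e^{it\sigma\Lambda}G]\|_{H^{M+2}}\lesssim 2^{2l}\ep_1^3$.

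The main obstacle is the inner bound $\|P_l N^\sigma_{\nu\rho}[V_\nu,V_\rho]\|_{L^2}\lesssim 2^l\ep_1^2$ for the piece coming from $\Lambda R_j(\rho v_j)$: the outer multiplier $\Lambda R_j P_l$ is merely an $L^2$-bounded Calder\'on--Zygmund operator at $|\eta|\le 1$ and contributes no $2^l$ of its own, so the full $2^l$ must come from a torus Bernstein estimate $\|P_l(\rho v_j)\|_{L^2}\lesssim 2^l\|\rho v_j\|_{L^1}$ applied to the scalar product; the constant in this estimate is uniform in $l\ge-\log R$ because the convolution kernel of $P_l$ on the torus has $L^1$ norm $\lesssim 1$ and $L^2$ norm $\lesssim 2^l$. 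The other piece $|\nabla|(v^2/2)$ already carries its own $2^l$ directly from the outer $|\nabla|$ and produces the even smaller bound $2^{2l}\ep_1^2$. Finally, the symmetric term $W_{\sigma\rho}[P_l e^{it\sigma\Lambda}N^\sigma_{\mu\nu}[V_\mu, V_\nu], V_\rho]$ is handled by the identical argument with the two arguments of $W$ swapped.
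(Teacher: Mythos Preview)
Your approach is correct in spirit but genuinely different from the paper's, and your write-up of the low-frequency output case is too terse to close as stated.

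The paper works entirely on the Fourier side in three lines: it bounds $\|\mathcal F N^\sigma_{\nu\rho}[V_\nu,V_\rho]\|_{L^\infty}\lesssim\|V\|_{H^1}^2$ from the pointwise symbol bound $|m_{\nu\rho}|\lesssim|\xi_1|+|\xi_2|$, uses $|m_{\mu\sigma}/\Phi_{\mu\sigma}|\lesssim\Lambda(\xi_1)$ when $|\xi_2|\lesssim 1$, and then applies Young's inequality $\|\hat f*\hat g\|_{\ell^2}\lesssim\|\hat f\|_{\ell^2}\|\hat g\|_{\ell^1}$. The entire $2^{2l}$ comes in one shot from $\|\varphi_l\,\mathcal F N^\sigma\|_{\ell^1}\lesssim 2^{2l}\ep_1^2$, i.e.\ the Fourier volume of $\supp\varphi_l$. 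By contrast you extract $2^{2l}$ as two separate physical-space Bernstein gains ($\|P_l h\|_{L^\infty}\lesssim 2^l\|P_l h\|_{L^2}$ and $\|P_l(fg)\|_{L^2}\lesssim 2^l\|fg\|_{L^1}$) and then run the outer $W_{\mu\sigma}$ through the $S^\infty$ paraproduct machinery with $L^2\times L^\infty$. This is a legitimate and more ``operator-theoretic'' route; its advantage is that it stays in the framework of Lemma~\ref{paraprod} and (\ref{m/F-Soo}) already set up in the paper, while the paper's convolution argument is shorter and needs no dyadic summation at all.

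The one soft spot is your sentence on $k\le 0$. With the refined bound $\|m_{\mu\sigma}/\Phi_{\mu\sigma}\|_{S^\infty_{k_1,l;k}}\lesssim 2^{\max(k_1,l)}$ and $L^2\times L^\infty$ alone, the $k_1$-sum for $|k-l|\le 2$ converges only to $\lesssim 2^{3l}\mathcal L_R^{1/2}\ep_1^3$, and the $\ell^2_k$-sum over $k<l-2$ (where $k_1=l+O(1)$ is forced and the bound is $2^{3l}\ep_1^3$ uniformly in $k$) picks up another $\mathcal L_R^{1/2}$. So as written you prove $2^{2l}\mathcal L_R^{1/2}\ep_1^3$, not $2^{2l}\ep_1^3$. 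The fix is immediate: in the regime $k<l-2$ use $L^2\times L^2\to L^1$ followed by output Bernstein $\|P_k\cdot\|_{L^2}\lesssim 2^k\|\cdot\|_{L^1}$ to recover a $2^k$ for the $k$-sum; in the regime $|k-l|\le 2$ switch to $L^\infty\times L^2$ and use $\|P_{k_1}U\|_{L^\infty}\lesssim 2^{k_1}\|P_{k_1}U\|_{L^2}$ to make the $k_1$-sum geometric. Either adjustment removes the logarithm and matches the proposition as stated.
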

\begin{proof}
We use $|m_{\mu\nu}(\xi_1,\xi_2)|\lesssim |\xi_1|+|\xi_2|$ to get
\[
\|\mathcal FN_{\nu\rho}^\sigma[V_\nu,V_\sigma]\|_{L^\infty}
\lesssim \|V\|_{H^1}^2\lesssim \ep_1^2.
\]
When $|\xi_2|\lesssim 1$, by (\ref{m/F-C0}) we also have
$|(m_{\mu\nu}/\Phi_{\mu\nu})(\xi_1,\xi_2)|\lesssim \Lambda(\xi_1)$, so
\begin{align*}
\|W_{\mu\sigma}[V_\mu,P_le^{it\Lambda}N_{\nu\rho}^\sigma[V_\nu,V_\rho]]\|_{H^{M+2}}
&\lesssim \||\mathcal F\Lambda^{M+3}V|*|\mathcal FP_lN_{\nu\rho}^\sigma[V_\nu,V_\sigma]|\|_{L^2}\\
&\lesssim \|V\|_{H^{M+3}}\|\mathcal FP_lN_{\nu\rho}^\sigma[V_\nu,V_\sigma]\|_{L^1}\\
&\lesssim 2^{2l}\ep_1^3
\end{align*}
and a similar bound holds for $\|P_kW_{\sigma\rho}[P_lN_{\mu\nu}^\sigma[V_\mu,V_\nu],V_\rho]]\|_{H^{M+2}}$.
\end{proof}

\section{$Z$-norm estimates for large $R$}\label{ZEst}
This section is devoted to the proof of the $Z$ norm estimate (\ref{growthZ2}),
which is contained in Proposition \ref{W-Z} and Proposition \ref{H-Z} below.

\subsection{Integration by parts in phase space}
We need a lemma to integrate by parts in phase space.
\begin{lemma}[\cite{GuIoPa}, Lemma A.2 or \cite{IoPa2}, Lemma 5.4]\label{int-part}
Let $0<\ep\le1/\ep\le K$. Suppose $f$, $g:\R^d\to\R$ satisfies
\[
|\nabla f|\ge 1_{\supp g},\text{ and for all }L\ge2,\ 
|\nabla^L f|\lesssim_L \ep^{1-L}\text{ on }\supp g.
\]
Then
\[
\left| \int e^{iKf}g \right|\lesssim_{d,L} (K\ep)^{-L} \sum_{l=0}^L \ep^l\|\nabla^lg\|_{L^1}.
\]
\end{lemma}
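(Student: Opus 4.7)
The plan is to prove this by repeated non-stationary phase integration by parts, using the first-order differential operator
\[
\mathcal{D} = \frac{1}{iK}\, \frac{\nabla f}{|\nabla f|^2}\cdot\nabla,
\]
which satisfies $\mathcal{D}(e^{iKf}) = e^{iKf}$ on the support of $g$ (where $|\nabla f| \geq 1$). Taking the formal adjoint
\[
\mathcal{D}^* \, g = \frac{i}{K}\, \nabla\cdot\!\left( \frac{\nabla f}{|\nabla f|^2}\, g \right)
\]
and applying it $L$ times, I would write
\[
\int e^{iKf} g\, dx \;=\; \int e^{iKf}\, (\mathcal{D}^*)^L g\, dx,
\]
so that everything reduces to a pointwise bound on $(\mathcal{D}^*)^L g$.

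Next I would expand $(\mathcal{D}^*)^L g$ by the Leibniz rule, schematically
\[
(\mathcal{D}^*)^L g \;=\; \frac{1}{(iK)^L} \sum_{l=0}^{L} \sum_{\alpha} c_\alpha\, h_\alpha\, \nabla^l g,
\]
where each $h_\alpha$ is a product of factors of the form $\nabla^{j}\!\bigl(\partial_i f / |\nabla f|^2\bigr)$ whose orders sum to $L-l$, and $c_\alpha$ are universal combinatorial constants depending only on $d$ and $L$. The main technical step is then a symbol-calculus estimate:
\[
\bigl|\nabla^j\!\bigl(\partial_i f / |\nabla f|^2\bigr)\bigr| \;\lesssim_{j}\; \varepsilon^{-j} \qquad (j\geq 0)
\]
on $\supp g$. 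I would prove this by induction in $j$, using Faà di Bruno on $1/|\nabla f|^2$ combined with the hypotheses $|\nabla f|\geq 1$ and $|\nabla^{L'} f|\lesssim \varepsilon^{1-L'}$ for $L'\geq 2$; at each step every extra derivative produces either a higher derivative of $f$ (gaining a factor $\varepsilon^{-1}$) or reciprocates an additional power of $|\nabla f|^2$ (which is bounded below by $1$, hence harmless). Multiplying through, the product $h_\alpha$ of total differentiation order $L-l$ is bounded by $\varepsilon^{-(L-l)}$.

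Putting the pieces together, I take absolute values inside the integral and use the product structure to get
\[
\Bigl|\int e^{iKf} g\, dx\Bigr|
\;\lesssim\; K^{-L} \sum_{l=0}^{L} \varepsilon^{-(L-l)}\, \|\nabla^l g\|_{L^1}
\;=\; (K\varepsilon)^{-L} \sum_{l=0}^{L} \varepsilon^{l}\, \|\nabla^l g\|_{L^1},
\]
which is exactly the claimed inequality. The condition $\varepsilon \leq 1/\varepsilon \leq K$ is used only to guarantee that the gain $(K\varepsilon)^{-L}$ is genuinely small and that the extra factor $\varepsilon^l$ in the $l$-th term is actually $\leq 1$ so that the sum is dominated by the $l=0$ contribution when $g$ is smooth.

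The main obstacle is the combinatorial step: controlling $\nabla^j(\partial_i f/|\nabla f|^2)$ uniformly with the correct power of $\varepsilon$. This is routine but easy to bungle because derivatives hitting the denominator $|\nabla f|^2$ produce products of low- and high-order derivatives of $f$; one must verify that the worst case still obeys the scaling $\varepsilon^{-j}$. Once that lemma is in place, the rest of the argument is a clean integration by parts together with Leibniz and the triangle inequality.
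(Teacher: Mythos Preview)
Your argument is correct and is exactly the standard non-stationary phase proof; the paper itself does not prove this lemma but merely cites it from \cite{GuIoPa}, Lemma~A.2 and \cite{IoPa2}, Lemma~5.4, where the same integration-by-parts scheme with the operator $\mathcal{D}=\frac{1}{iK}\frac{\nabla f}{|\nabla f|^2}\cdot\nabla$ is used. Your combinatorial verification that $|\nabla^j(\partial_i f/|\nabla f|^2)|\lesssim_j\varepsilon^{-j}$ is the right bookkeeping (each derivative either raises the order of some $\nabla^{a}f$ with $a\ge2$, costing $\varepsilon^{-1}$, or hits the denominator, producing an extra $\nabla f\cdot\nabla^2 f/|\nabla f|^2$ which again costs $\varepsilon^{-1}$ while keeping the net power of $|\nabla f|$ in the denominator nonnegative), and the rest is Leibniz plus the triangle inequality.
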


We will only use the case when $\ep=1$, for which the bound reads
\[
\left| \int e^{iKf}g \right|\lesssim_{d,L} K^{-L} \|g\|_{W^{L,1}}.
\]

\begin{lemma}\label{int-part-cor}
Let $r\ge2$. Suppose $f$, $g:\R^d\to\R$ satisfies $|\nabla f|\le r/2$ on $\supp g$,
and for all $L\ge2$, we have $|\nabla^Lf|\lesssim_L r$ on $\supp g$. Let
\[
\mathcal K(x)=\int e^{i(x\cdot\xi+f(\xi))} g(\xi)d\xi.
\]
Then for all $L\ge 1$ we have
\[
\|\mathcal K\|_{L^1(\R^d\backslash B(0,r))}\lesssim_L r^{-L} \|g\|_{W^{L+d,1}}.
\]
\end{lemma}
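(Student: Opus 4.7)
The goal is to reduce Lemma \ref{int-part-cor} to pointwise application of Lemma \ref{int-part} on the region $|x|\ge r$, followed by integration in $x$.

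Here is the plan. Fix $x$ with $|x|\ge r$ and write the integrand as $e^{iK\tilde f(\xi)}g(\xi)$ with
\[
K=|x|/2,\qquad \tilde f(\xi)=2\bigl(\tfrac{x}{|x|}\cdot\xi+f(\xi)/|x|\bigr),
\]
so that $K\tilde f(\xi)=x\cdot\xi+f(\xi)$. On $\supp g$ I use the hypothesis $|\nabla f|\le r/2\le |x|/2$ to bound $|\nabla f|/|x|\le 1/2$, which gives $|\nabla\tilde f|\ge 1$ there. For $L\ge2$ only the nonlinear piece $2f/|x|$ contributes, and $|\nabla^L f|\lesssim_L r\le|x|$ on $\supp g$ gives $|\nabla^L\tilde f|\lesssim_L 1$. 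So the hypotheses of Lemma \ref{int-part} hold with $\varepsilon=1$.

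Applying that lemma with parameter $L'=L+d$ (for any fixed $L\ge1$) yields the pointwise bound
\[
|\mathcal K(x)|\lesssim_L K^{-L-d}\|g\|_{W^{L+d,1}}\lesssim_L |x|^{-L-d}\|g\|_{W^{L+d,1}}
\]
valid for every $|x|\ge r$. Integrating this estimate over the exterior of the ball, and using $\int_{|x|\ge r}|x|^{-L-d}\,dx\approx r^{-L}$ (which holds because $L>0$), gives
\[
\|\mathcal K\|_{L^1(\R^d\setminus B(0,r))}\lesssim_L r^{-L}\|g\|_{W^{L+d,1}},
\]
which is the claim.

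The only real things to check are the constants in the derivative bounds for $\tilde f$ (routine given the normalization $r\le|x|$ and the assumed bounds on $\nabla^L f$), and that enough regularity of $g$ is available so that the decay rate $|x|^{-L-d}$ is integrable at infinity; the gain of $d$ derivatives in the Sobolev norm exactly compensates for the volume factor in dimension $d$. There is no serious obstacle, since all the analytic content lies in Lemma \ref{int-part} itself.
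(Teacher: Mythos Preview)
Your proposal is correct and follows essentially the same approach as the paper: rescale the phase so that $K=|x|/2$ and $\tilde f$ has unit-size gradient on $\supp g$, apply Lemma \ref{int-part} with $\varepsilon=1$ and exponent $L+d$, then integrate the resulting pointwise decay $|x|^{-L-d}$ over $\{|x|\ge r\}$. The only point you leave implicit is that $K=|x|/2\ge r/2\ge1$, which is needed for Lemma \ref{int-part}; otherwise the arguments coincide.
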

\begin{proof}
We have
\[
x\cdot\xi+f(\xi)=\frac{|x|}2\left( \frac{2x}{|x|}\cdot\xi+\frac{2f(\xi)}{|x|} \right).
\]
Suppose $|x|\ge r$. Let $K=|x|/2\ge r/2\ge1$, and
\[
F(\xi)=\frac{2x}{|x|}\cdot\xi+\frac{2f(\xi)}{|x|}.
\]
Then $|\nabla F|\ge1$ and $|\nabla^LF|\lesssim_L 1$. By Lemma \ref{int-part},
for $|x|\ge r$ we have
\[
|\mathcal K(x)|\lesssim_L |x|^{-L}\|g\|_{W^{L,1}}.
\]
The result follows from integrating this bound with $L+d$ in place of $L$.
\end{proof}

\subsection{Bounding the quadratic boundary terms}
In this section we bound the $Z$ norm of the quadratic boundary terms $W_{\mu\nu}$.
\begin{proposition}\label{W-Z}
Assume $N\ge3(M+4)$ and (\ref{growthZ}). Then
\[
\|W_{\mu\nu}(t)\|_Z\lesssim (t^{1+9/N}/R^{4/3}+1)\ep_1^2.
\]
\end{proposition}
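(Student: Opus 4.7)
The plan is to combine the frequency-space representation of $W_{\mu\nu}$ with a physical-space dyadic decomposition of the profiles, and to show that the bilinear kernel concentrates the output near the larger of the two input supports (up to a spreading distance $\lesssim t$). Following (\ref{W2k}) I write $W_{\mu\nu}=\sum_{k_1\le k_2}W^{\mu\nu}_{k_1,k_2}$, decompose the inputs as $V_\mu=\sum_{j_1\ge 0}Q_{j_1}V_\mu$ and $V_\nu=\sum_{j_2\ge 0}Q_{j_2}V_\nu$, insert an output localization $Q_jP_{k_3}$, and aim to bound
\[
S=\bigl\|Q_j\Lambda^{M+2}P_{k_3}W^{\mu\nu}_{k_1,k_2}[Q_{j_1}P_{k_1}V_\mu,Q_{j_2}P_{k_2}V_\nu]\bigr\|_{L^2}.
\]
The $Z$-hypothesis in (\ref{growthZ}) together with Lemma \ref{XZ-bound}(iv) supplies the localization input $\|Q_jP_kV\|_{L^2}\lesssim 2^{-2j/3-(M+2)k^+}\ep_1$.

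The key kernel estimate is to represent
\[
W^{\mu\nu}_{k_1,k_2}[f,g](x)=\int K(x-y_1,x-y_2,t)\,f(y_1)g(y_2)\,dy_1\,dy_2,
\]
with oscillatory phase $(x-y_1)\cdot\xi_1+(x-y_2)\cdot\xi_2+t\Phi_{\mu\nu}(\xi_1,\xi_2)$. Because $|\nabla_{\xi_i}\Phi_{\mu\nu}|\lesssim 1$, an iterated application of Lemma \ref{int-part-cor} in $\xi_1$ and in $\xi_2$ gives rapid decay of $K$ whenever $|x-y_i|\gtrsim t+1$. Consequently, after inserting $Q_{j_1},Q_{j_2}$ and applying $Q_j$, only $2^j\lesssim\max(2^{j_1},2^{j_2},t)+1$ contributes nontrivially, the rest being a rapidly decreasing tail.

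In this effective regime I would combine Lemma \ref{paraprod} with the multiplier bound (\ref{m/F-Soo}) and Bernstein's inequality to obtain a bound on $S$ by a power of the $2^{k_i}$ times $2^{-2(j_1+j_2)/3}\ep_1^2$. The $k_1$-sum converges because $k_1\le k_2$; the $k_2,k_3$-sums converge because $N\ge 3(M+4)$ supplies enough regularity surplus over $M+2$; the $j_i$-sums converge geometrically thanks to the $2^{-2j_i/3}$ factors. What is left is the weighted $j$-sum $\sum_{j}2^{4j/3}S^2$, which peaks at $2^j\approx\max(2^{j_1},2^{j_2})+t$: when a $j_i$-term dominates one recovers the $+1$ part of the bound, while when $t$ dominates, the residual $t^{4/3}$ factor must be amortized by interpolating at the critical output frequency $2^{k_3}\approx t^{1/N}$ between the base $L^2$-type $Z$-bound and the $H^N$-bound from (\ref{growthZ}) via Bernstein. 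This interpolation produces the $t^{9/N}$ loss, while the $R^{-4/3}$ gain records that the weighted $L^2$ mass of $W_{\mu\nu}$ concentrates on an annulus of radius $\lesssim t$ rather than on the full torus of volume $R^2$.

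The main obstacle lies in this final balancing: both the exponent $9/N$ and the $R^{-4/3}$ gain are determined by the precise cutoff at the critical frequency, and any inefficiency in the intermediate dyadic sums would weaken them. A secondary subtlety is uniformity of the integration-by-parts estimate in the frequency scales $k_1,k_2$, which requires exploiting $|\nabla^L\Phi_{\mu\nu}|\lesssim 1$ (the Klein--Gordon dispersion relation) rather than any cancellation specific to low or high frequencies.
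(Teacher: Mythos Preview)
Your kernel-localization argument via Lemma \ref{int-part-cor} is correct and matches what the paper does in the regime $2^j\gg t$ (the paper's Case 4.2). The genuine gap is in the complementary regime $2^j\lesssim t$. There your plan is to bound $S$ by (frequency factors)$\times 2^{-2(j_1+j_2)/3}\ep_1^2$ using only Lemma \ref{paraprod}, Bernstein, and the $Z$-norm input bounds. This estimate carries no decay in $t$ whatsoever, so summing the output weight $2^{2j/3}$ over $j\lesssim\mathcal L$ produces an unavoidable factor $\min(t,R)^{2/3}$. Your claim that this can be ``amortized by interpolating at the critical output frequency $2^{k_3}\approx t^{1/N}$'' does not work: interpolation against the $H^N$ bound trades high-frequency growth for decay in $k_3$, but it cannot manufacture decay in $t$ at fixed low frequencies. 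The paper closes this gap by invoking the dispersive estimate Lemma \ref{dispersive} on the factor $e^{-it\mu\Lambda}P_{[k_1-1,k_1+1]}Q_{j_1}P_{k_1}V_\mu$, gaining a factor $(t/R+1)^2/(1+t)$ in the $L^1\to L^\infty$ step. It is precisely this $1/(1+t)$ that beats the $t^{2/3}$ from the $j$-sum and yields the $O(1)$ bound when $t\lesssim R$.

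Relatedly, your explanation of the $R^{-4/3}$ is not right: it does not come from ``concentration on an annulus of radius $\lesssim t$,'' but from the combination $R^{2/3}\cdot R^{-2}$, where $R^{2/3}$ is the weighted $j$-sum capped at $j\le\log R+O(1)$ and $R^{-2}$ is the wraparound penalty $(t/R)^2$ in Lemma \ref{dispersive} once $t\gtrsim R$. Likewise the $t^{9/N}$ loss in the paper arises from the multiplier bound $2^{7\min k_i^+}\lesssim 2^{8K^+}$ in (\ref{m/F-Soo}) combined with the frequency cutoff $K\le j/N\le\mathcal L/N$, not from an interpolation in $k_3$. You will need to insert the dispersive decay explicitly in the region $j\le\mathcal L+O(1)$; the integration-by-parts kernel bound alone is not enough there.
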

\begin{proof}
We use the decomposition (\ref{W2k}) and assume by symmetry $k_1\le k_2$
(except in Case 4.1 below). We distinguish several cases to estimate
\[
\|W_{\mu\nu}\|_Z\approx\left\| 2^{2j/3}\|Q_j\Lambda^{M+2}W_{\mu\nu}\|_{L^2} \right\|_{\ell^2_{0\le j\le\log R+1}}
\]

{\bf Case 1:} $k_2\ge j/N$. We sum (\ref{WL2}) over $k_1\in\Z$ and $k_2\ge j/N$, and use $(M+3-N)k_2\le-2Nk_2/3-k_2\le-2j/3-j/N$ to get
\[
\left\| \sum_{k_1\in\Z,k_2\ge j/N} 2^{2j/3}\|Q_j\Lambda^{M+2}W^{\mu\nu}_{k_1,k_2}\|_{L^2} \right\|_{\ell^2_{j\ge0}}
\lesssim \sum_{j\ge0} 2^{-j/N}\|U\|_{H^N}^2\lesssim \ep_1^2.
\]

{\bf Case 2:} $k_1\le-3j/4$. We sum (\ref{WL2}) with $k_1\le-3j/4$ and $k_2\in\Z$ and use $N\ge M+4$ to get the same bound as Case 1.

{\bf Case 3:} $P_{k_3}W_{\mu\nu}$ for $k_3\le-3j/4<k_1$.
We sum (\ref{WL22}) with $k_3$ in place of $k$, $k_1\in[k_3,k_2]$,
$k_3\le-3j/4$ and $k_2\in\Z$ to get the same bound as Case 1.

{\bf Case 4:} $-3j/4<k_i\le j/N+O(1)$, $1\le i\le 3$.

{\bf Case 4.1:} $j\le\mathcal L+5$. We decompose
\begin{align*}
W^{\mu\nu}_{k_1,k_2}&=\sum_{j_1,j_2\ge0} W^{\mu\nu}_{j_1,k_1,j_2,k_2},\\
W^{\mu\nu}_{j_1,k_1,j_2,k_2}&=W_{\mu\nu}[P_{[k_1-1,k_1+1]}Q_{j_1}P_{k_1}V_\mu,P_{[k_2-1,k_2+1]}Q_{j_2}P_{k_2}V_\nu].
\end{align*}
We now assume $j_1\le j_2$ instead of $k_1\le k_2$.
By Lemma \ref{paraprod}, (\ref{m/F-Soo}), Lemma \ref{dispersive},
unitarity of $e^{it\Lambda}$, H\"older's inequality and Lemma \ref{XZ-bound} (iv) we have
\begin{align*}
\|\Lambda^{M+2}P_{k_3}W^{\mu\nu}_{j_1,k_1,j_2,k_2}(t)\|_{L^2}
&\lesssim 2^{(M+3)K^++7k_2^+}\|e^{-it\mu\Lambda}P_{[k_1-1,k_1+1]}Q_{j_1}P_{k_1}V_\mu(t)\|_{L^\infty}\|e^{-it\nu\Lambda}Q_{j_2}P_{k_2}V_\nu(t)\|_{L^2}\\
&\lesssim 2^{(M+3)K^++7k_2^++2k_1^+}\frac{(t/R+1)^2}{1+t} \|Q_{j_1}P_{k_1}V(t)\|_{L^1}
\|Q_{j_2}P_{k_2}V(t)\|_{L^2}\\
&\lesssim 2^{8K^++j_1/3-2j_2/3}\frac{(t/R+1)^2}{1+t}\ep_1^2,
\end{align*}
where we recall $K=\max k_i$. We sum over $j_1,j_2\in\Z$, $j_1\le j_2$ and $-\mathcal L-5<-3j/4<k_i\le j/N+O(1)\le\mathcal L/N+O(1)$ to get
\[
\sum_{-2j/3<k_i\le j/N+O(1)} \|\Lambda^{M+2}P_{k_3}W^{\mu\nu}_{k_1,k_2}(t)\|_{L^2}
\lesssim \frac{(t/R+1)^2}{(1+t)^{1-9/N}}\ep_1^2.
\]
Then we sum over $0\le j\le\min(\mathcal L,\log R)+5$ and use $N\ge27$ to get
\begin{align*}
&\left\| \sum_{-3j/4<k_i\le j/N+O(1)} 2^{2j/3}\|Q_j\Lambda^{M+2}P_{k_3}W^{\mu\nu}_{k_1,k_2}(t)\|_{L^2} \right\|_{\ell^2_{0\le j\le\min(\mathcal L,\log R)+5}}\\
\lesssim&(t^{1+9/N}/R^{4/3}+1)\ep_1^2.
\end{align*}

{\bf Case 4.2:} $j>\mathcal L+5$. In this case $t<2^{j-5}$. We decompose
\begin{align*}
W^{\mu\nu}_{k_1,k_2}&=A^{\mu\nu}_{k_1,k_2}+B^{\mu\nu}_{k_1,k_2},\\
A^{\mu\nu}_{k_1,k_2}
&=W_{\mu\nu}[P_{[k_1-1,k_1+1]}Q_{\ge j-4}P_{k_1}V_\mu,P_{k_2}V_\nu],\\
B^{\mu\nu}_{k_1,k_2}
&=W_{\mu\nu}[P_{[k_1-1,k_1+1]}Q_{\le j-5}P_{k_1}V_\mu,P_{k_2}V_\nu],
\end{align*}
where we have used $P_{[k-1,k+1]}P_k=P_k$.

For $A$ we have, by (\ref{WL2}) and unitarity of $e^{it\Lambda}$,
\[
\|\Lambda^{M+2}A^{\mu\nu}_{k_1,k_2}\|_{L^2}
\lesssim 2^{(M+2-N)k_2^++k_2+k_1^++k_1}\|Q_{\ge j-4}P_{k_1}V\|_{L^2}\|P_{k_2}U\|_{H^N}.
\]
We sum over $j\ge0$, $k_1$, $k_2\in\Z$ and use $N\ge M+4$ and Lemma \ref{XZ-bound} (iv) to get
\[
\sum_{k_1,k_2\in\Z} \|2^{2j/3} \|Q_j\Lambda^{M+2}A^{\mu\nu}_{k_1,k_2}\|_{L^2}\|_{\ell^2_{j\ge0}}
\lesssim \|V\|_Z\|U\|_{H^N}\lesssim \ep_1^2.
\]

To bound $B$, we can assume the support of $Q_j$ intersects the torus.
This implies that $2^{j-1}<R/\sqrt2$, or $R>2^{j-1/2}$. We write
\begin{align*}
\Lambda^{M+2}P_{k_3}B^{\mu\nu}_{k_1,k_2}(x,t)
&=\int G(x,y,z,t)Q_{\le j-5}P_{k_1}V_\mu(y,t)P_{k_2}V_\nu(z,t)dydz,\\
G(x,y,z,t)&=\frac{C^2}{R^4}\sum_{\xi_1,\xi_2\in(2\pi\Z/R)^2} e^{i\phi_{\mu\nu}(\xi_1,\xi_2)}\Lambda(\xi_1+\xi_2)^{M+2}m^{\mu\nu}_{k_1,k_2;k_3}(\xi_1,\xi_2),\\
\phi_{\mu\nu}(\xi_1,\xi_2)&=(x-y)\cdot\xi_1+(x-z)\cdot\xi_2+t\Phi_{\mu\nu}(\xi_1,\xi_2),\\
m^{\mu\nu}_{k_1,k_2;k_3}(\xi_1,\xi_2)&=\varphi_{[k_1-1,k_1+1]}(\xi_1)\varphi_{[k_2-1,k_2+1]}(\xi_2)\frac{m_{\mu\nu}(\xi_1,\xi_2)}{i\Phi_{\mu\nu}(\xi_1,\xi_2)}\varphi_{k_3}(\xi_1+\xi_2).
\end{align*}
By the Poisson summation formula we have
\begin{align}
\label{G-K}
G(x,y,z,t)&=\sum_{y',z'\in(R\Z)^2} \mathcal K(x,y+y',z+z',t),\\
\nonumber
\mathcal K(x,y,z,t)&=\int e^{i\phi_{\mu\nu}(\xi_1,\xi_2)}\Lambda(\xi_1+\xi_2)^{M+2}m^{\mu\nu}_{k_1,k_2;k_3}(\xi_1,\xi_2)d\xi_1 d\xi_2.
\end{align}

Since $m_{\mu\nu}=a_0$ is of the form (\ref{m-prod}), for $L\ge0$ we have
\begin{equation}\label{m-CL}
|\nabla^L m_{\mu\nu}(\xi_1,\xi_2)|\lesssim_L (|\xi_1|+|\xi_2|)(1+\min(|\xi_1|,|\xi_2|,|\xi_1+\xi_2|)^{-L}).
\end{equation}
From (\ref{m-CL}), $|\nabla^L(\varphi_k\Lambda^M)|\lesssim_L 2^{Mk^+-Lk}$ and (\ref{1/F-CL}) it follows that
\[
\left| \nabla^L \left( \Lambda(\xi_1+\xi_2)^{M+2}m^{\mu\nu}_{k_1,k_2;k_3}(\xi_1,\xi_2) \right) \right|
\lesssim_L 2^{(M+2)k_2^++k_2}(2^{\min k_i}+2^{-L\min k_i}).
\]
Using $-3j/4<k_i\le j/N+O(1)$ we have
\[
\left\| \Lambda(\xi_1+\xi_2)^{M+2}m^{\mu\nu}_{k_1,k_2;k_3}(\xi_1,\xi_2) \right\|_{W^{24,1}}\lesssim 2^{(M+2)k_2^++3k_2+2k_3+18j}.
\]
Since $|\nabla\Lambda|<1$, we have $|t\nabla\Phi_{\mu\nu}(\xi_1,\xi_2)|<4t<2^{j-3}$, so by Lemma \ref{int-part-cor} (with $L=20$ and $d=4$),
\begin{equation}\label{K2-L1}
\|1_{|x-y|>2^{j-2}}\mathcal K(x,y,z,t)\|_{L^1_{y,z}}\lesssim 2^{(M+2)k_2^++3k_2+2k_3-2j}.
\end{equation}
When $\varphi_j(x)\varphi_{\le j-5}(y)>0$, we have $|x-y|>2^{j-2}$,
and for all $y'\in(R\Z)^2\backslash\{(0,0)\}$ we have $|x-y-y'|>R/2-2^{j-4}>2^{j-2}$. Then by (\ref{G-K}),
\[
\|\varphi_j(x)\varphi_{\le j-5}(y)G(x,y,z,t)\|_{L^1_{y,z}((\R/R\Z)^2)}
\lesssim\text{right-hand side of (\ref{K2-L1})}.
\]
Combining this with Bernstein's inequality $\|P_kV\|_{L^\infty}\lesssim 2^k\|P_kV\|_{L^2}=2^k\|P_kU\|_{L^2}$ we get
\begin{align*}
\|Q_j\Lambda^{M+2}P_{k_3}B^{\mu\nu}_{k_1,k_2}\|_{L^2}
&\lesssim 2^j\|Q_j\Lambda^{M+2}P_{k_3}B^{\mu\nu}_{k_1,k_2}\|_{L^\infty}\\
&\lesssim 2^{(M+2)k_2^++4k_2+2k_3+k_1-j}\|P_{k_1}U\|_{L^2}\|P_{k_2}U\|_{L^2}.
\end{align*}
We sum over $k_i\in\Z$ and $j>\mathcal L+5$ and use $N\ge M+9$ to get
\[
\left\| \sum_{-3j/4<k_i\le j/N+O(1)} 2^{2j/3}\|Q_j\Lambda^{M+2}P_{k_3}B^{\mu\nu}_{k_1,k_2}\|_{L^2} \right\|_{\ell^2_{j>\mathcal L+5}}
\lesssim \|U\|_{H^N}^2\le \ep_1^2.
\]

Combining Case 1 through Case 4 above shows Proposition \ref{W-Z}.
\end{proof}

\subsection{Bounding the cubic bulk terms}
In this section we bound the $Z$ norm of the cubic bulk terms $H_{\mu\nu\rho}$.
\begin{proposition}\label{H-Z}
Assume $N\ge\max(3(M+4),106)$ and (\ref{growthZ}). Then
\[
\int_0^t \|H_{\mu\nu\rho}(s)\|_Zds\lesssim (t^{3+33/N}/R^{10/3-2/N}+1)\ep_1^3.
\]
\end{proposition}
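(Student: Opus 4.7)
The plan is to follow the template of the proof of Proposition \ref{W-Z}, adapted to the trilinear bulk term. First decompose $H_{\mu\nu\rho}=\sum_{l\in\Z,\sigma=\pm} H^{\mu\nu\rho\sigma}_l$ via (\ref{H3k}) and split according to $l\le 0$ versus $l\ge 1$. For $l\le 0$, combine Proposition \ref{PkH-L2-l-small} with Lemma \ref{XZ-bound} (ii) to get $\|H^{\mu\nu\rho\sigma}_l\|_Z \les R^{2/3} 2^{2l}\ep_1^3$; the geometric sum over $l\le 0$ and integration in time produces an $O(tR^{2/3}\ep_1^3)$ contribution, which is absorbed by the $+1$ on the right-hand side under the bootstrap assumption $t\lesssim T_0$. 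The rest of the proof treats $l\ge 1$.

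Write $H^{\mu\nu\rho\sigma}_l=\sum H^{\mu\nu\rho\sigma}_{k_1,k_2,k_3,l}$ by further dyadic decomposition in input frequencies and evaluate the $Z$-norm as $\|2^{2j/3}\|Q_j\Lambda^{M+2}H^{\mu\nu\rho\sigma}_{k_1,k_2,k_3,l}\|_{L^2}\|_{\ell^2_{0\le j\le\log R+1}}$, assuming WLOG $k_1\le k_2\le k_3$. Dispose of the easy regimes $k_3\ge j/N$, $k_1\le -3j/4$, and output frequency $k\le-3j/4$ exactly as in Cases 1--3 of Proposition \ref{W-Z}: apply Lemma \ref{paraprod} with the multiplier bound (\ref{mk3-Soo}) for $m_{\mu\nu\rho}$, place the two lowest-frequency inputs in $L^\infty$ via Bernstein and the third in $L^2$, then absorb $2^{2j/3}$ into the Sobolev norm using $N\ge 3(M+4)$. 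Each such regime contributes $O(\ep_1^3)$ pointwise in time.

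The main case is $-3j/4<k_i\le j/N+O(1)$ and $l\ge 1$. For $j\le\mathcal L+5$, decompose each $V$-factor further by physical scale $Q_{j_i}$ and use Lemma \ref{dispersive} to place two of the three profile factors $V_\mu(s)=e^{is\mu\Lambda}U_\mu(s)$ in $L^\infty$ with dispersive decay $(1+s)^{-1}(s/R+1)^2 2^{2k^+}$. The $L^1$ norms arising from the $L^\infty$ placements are converted to $\|V\|_Z$ via Lemma \ref{XZ-bound} (iv); the remaining factor is placed in $L^2$ and controlled by $\ep_1$. This yields a pointwise decay of order $(1+s)^{-2}(s/R+1)^4 \ep_1^3$ times polynomial factors in $2^{k_i}, 2^{j_i}, 2^j$; dyadic summation in $(k_i,j_i,j)$ (with the double sum in $j_1,j_2$ up to $\mathcal L$ producing logarithmic losses absorbed by $N\ge 106$ into the $33/N$ in the exponent) followed by integration in $s\in[0,t]$ gives the main $t^{3+33/N}/R^{10/3-2/N}\ep_1^3$ term.

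For $j>\mathcal L+5$, i.e., $s<2^{j-5}$, mimic Case 4.2 of Proposition \ref{W-Z}: split one factor as $Q_{\ge j-4}V$ plus $Q_{\le j-5}V$. The $Q_{\ge j-4}$ piece is controlled via Lemma \ref{XZ-bound} (iv), with the other two factors placed in $L^2\times L^\infty$ by Sobolev and Bernstein. For the $Q_{\le j-5}$ piece, express the output as a triple convolution with kernel $\mathcal K(x,y_1,y_2,y_3,s)$ coming from (\ref{H3-V}) localized in $(k_1,k_2,k_3,k,l)$; the time-derivative part of the total phase gradient has size $\lesssim s<2^{j-5}$, while the spatial separations $|x-y_i|$ are $\gtrsim 2^{j-2}$ (including after shifts by nonzero elements of $(R\Z)^2$, since the support of $Q_j$ forces $R>2^{j-1/2}$). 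Lemma \ref{int-part-cor} with enough derivatives yields rapid decay of $\mathcal K$ in $L^1_{y_1,y_2,y_3}$; the $W^{L,1}$-norm of the full symbol, which incorporates $\Lambda(\xi_1+\xi_2+\xi_3)^{M+2}$ and the singular factor $1/\Phi_{\mu\sigma}$ appearing in $m_{\mu\nu\rho}$, is controlled polynomially by (\ref{mk3-Soo}). Summation over $k_i$ and $j>\mathcal L+5$ then gives a contribution of $O(\ep_1^3)$ absorbed into the $+1$. The main obstacle is the bookkeeping of derivative losses coming from $1/\Phi_{\mu\sigma}$ (each derivative costing a factor $2^{\min k_j^+}$ by Lemma \ref{1/F-Soo}) against the phase-space integration by parts, precisely in the regime $-3j/4<k_i\le j/N$ where the minimal frequency can be comparable to $2^{-j}$; it is this balancing that forces the technical assumption $N\ge\max(3(M+4),106)$ in the statement.
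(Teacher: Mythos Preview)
Your proposal has a genuine gap: you treat $H_{\mu\nu\rho}$ as if it were a boundary term like $W_{\mu\nu}$, but $H$ is integrated over $[0,t]$, so every case must produce a bound on $\|H(s)\|_Z$ that decays faster than $1/s$. Your ``easy regimes'' and the $l\le 0$ case do not.

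Concretely, your $l\le 0$ estimate $\|H^{\mu\nu\rho\sigma}_l\|_Z\lesssim R^{2/3}2^{2l}\ep_1^3$ sums to $R^{2/3}\ep_1^3$ and integrates to $tR^{2/3}\ep_1^3$, which is \emph{not} dominated by $(t^{3+33/N}/R^{10/3-2/N}+1)\ep_1^3$ on the range $1\ll t\ll R^{5/3}$ (and the proposition is stated for all $t$ satisfying (\ref{growthZ}), not only $t\le T_0$). Likewise, copying Cases 1--3 of Proposition~\ref{W-Z} with thresholds $k_3\ge j/N$, $k_1\le -3j/4$, $k\le -3j/4$ yields at best $\|H(s)\|_Z\lesssim\ep_1^3$ with no $s$-decay, hence $t\ep_1^3$ after integration---again uncontrolled.

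The paper's fix is to make every threshold depend on $\max(j,\mathcal L)$ rather than $j$ alone: $k_3\ge 3\max(j,\mathcal L)/N$, $l\le -6\max(j,\mathcal L)/7$, $k_1\le -3\max(j,\mathcal L)/4$, $k\le -6\max(j,\mathcal L)/7$. This forces each case to carry a factor $2^{-c\max(j,\mathcal L)}$ which, after the $\ell^2_j$ sum with weight $2^{2j/3}$, decays like $(1+s)^{-1-\delta}$ and is therefore integrable. In particular the low-$l$ case uses Proposition~\ref{PkH-L2-l-small} with $l\le -6\max(j,\mathcal L)/7$ (not $l\le 0$) so that $2^{2l}\lesssim (1+s)^{-12/7}$ up to the $2^{2j/3}$ weight; and the low-$k_1$ case (Case~3 in the paper) does \emph{not} use plain Bernstein but rather the dispersive bound of Lemma~\ref{dispersiveZ}~(i) to extract time decay. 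Your plan for the main case (dispersion for $j\le\mathcal L+5$, nonstationary phase for $j>\mathcal L+5$) is in the right spirit, but the argument cannot close until the auxiliary cases are redone with $\mathcal L$-dependent thresholds.
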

\begin{proof}
Recall (\ref{H3k}). We assume by symmebtry $k_1\le k_2\le k_3$ (except in Case 5.1 below). From (\ref{m-prod}), (\ref{m/F-C0}) and (\ref{1/F-C0}) it follows in the same way as (\ref{WL2}) that
\begin{equation}\label{H3k-L2}
\begin{aligned}
\|\Lambda^{M+2}H^{\mu\nu\rho}_{k_1,k_2,k_3}\|_{L^2}
&+\|\Lambda^{M+2}H^{\mu\nu\rho}_{k_1,k_2,k_3,l}\|_{L^2}\\
&\lesssim 2^{k_1+k_2+(M+2)k_3^++2k_3+k_2^+}\|P_{k_1}U\|_{L^2}\|P_{k_2}U\|_{L^2}\|P_{k_3}U\|_{L^2}.
\end{aligned}
\end{equation}

We distinguish several cases.

{\bf Case 1:} $k_3\ge3\max(j,\mathcal L)/N$. We sum (\ref{H3k-L2}) over $k_1,k_2\in\Z$ and $k_3\ge3\max(j,\mathcal L)/N$, and use $(M+4-N)k_3\le-2Nk_3/3\le-2\max(j,\mathcal L)$ to get
\begin{align*}
\left\| \sum_{k_1,k_2\in\Z\atop k_3\ge3\max(j,\mathcal L)/N} 2^{2j/3}\|Q_j\Lambda^{M+2}H^{\mu\nu\rho}_{k_1,k_2,k_3}(t)\|_{L^2} \right\|_{\ell^2_{j\ge0}}
&\lesssim \sum_{j\ge0} 2^{2j/3-2\max(j,\mathcal L)}\|U(t)\|_{H^N}^3\\
&\lesssim (1+t)^{-4/3}\ep_1^3.
\end{align*}

{\bf Case 2:} $l\le-6\max(j,\mathcal L)/7$. By Proposition \ref{PkH-L2-l-small} we have
\begin{align*}
\left\| \sum_{l\le-6\max(j,\mathcal L)/7} 2^{2j/3}\|Q_j\Lambda^{M+2}H^{\mu\nu\rho\sigma}_l(t)\|_{L^2} \right\|_{\ell^2_{j\ge0}}
&\lesssim \sum_{j\ge0} 2^{2j/3-12\max(j,\mathcal L)/7}\ep_1^3\\
&\lesssim (1+t)^{-1.04}\ep_1^3.
\end{align*}

{\bf Case 3:} $k_1\le-3\max(j,{\mathcal L})/4$, $k_3<3\max(j,\mathcal L)/N$ and $l>-6\max(j,\mathcal L)/7$ (so $|l|\lesssim j+\mathcal L+k_3^++1\lesssim |k_1|$). By unitarity of $e^{it\Lambda}$, Bernstein's inequality, Lemma \ref{XZ-bound} (i) and (\ref{growthX1}),
\[
\|P_kU\|_{L^2}=\|P_kV\|_{L^2}\lesssim 2^{3k/5}\|P_kV\|_{L^{5/4}}
\lesssim 2^{3k/5}\ep_1.
\]
Using in addition Lemma \ref{paraprod}, (\ref{mk3-Soo}), Lemma \ref{dispersiveZ} (i) and $M\ge7$ we get
\begin{align*}
\|\Lambda^{M+2}P_kH^{\mu\nu\rho\sigma}_{k_1,k_2,k_3,l}(t)\|_{L^2}
&\lesssim 2^{(M+2)k^++k/20}\|P_kH^{\mu\nu\rho\sigma}_{k_1,k_2,k_3,l}(t)\|_{L^{40/21}}\\
&\lesssim 2^{(M+2)k_3^++2k_3+7k_2^++k/20}\|P_{k_1}U(t)\|_{L^{40}}\|P_{k_2}U(t)\|_{L^\infty}\|P_{k_3}U(t)\|_{L^2}\\
&\lesssim 2^{(M+2-N)k_3^++2k_3+k/20+8k_1/5}(1+t)^{-19/30}(t/R+1)^{4/3}\ep_1^3.
\end{align*}
We sum over $k_2\in[k_1,k_3]\cap\Z$, $k\le k_3+O(1)$, $k_3\in\Z$, $|l|\lesssim|k_1|$, $k_1\le-3\max(j,\mathcal L)/4$ and $j\ge0$, and use $N\ge M+5$ to get
\begin{align*}
&\left\| \sum_{k_1\le-3\max(j,\mathcal L)/4} \sum_{k_2,k_3\ge k_1} \sum_{|l|\lesssim k_1} 2^{2j/3}\|Q_j\Lambda^{M+2}H^{\mu\nu\rho\sigma}_{k_1,k_2,k_3,l}(t)\|_{L^2} \right\|_{\ell^2_{j\ge0}}\\
\lesssim &\left\| \sum_{k_1\le-3\max(j,\mathcal L)/4} |k_1|^22^{2j/3+8k_1/5} \right\|_{\ell^2_{j\ge0}}\frac{(t/R+1)^{4/3}}{(1+t)^{19/30}}\ep_1^3\\
\lesssim &\sum_{j\ge0} 2^{2j/3-7\max(j,\mathcal L)/6}\frac{(t/R+1)^{4/3}}{(1+t)^{19/30}}\ep_1^3
\lesssim \frac{(t/R+1)^{4/3}}{(1+t)^{17/15}}\ep_1^3.
\end{align*}

{\bf Case 4:} $k\le-6\max(j.\mathcal L)/7$, and $m_{\mu\sigma}$ or $m_{\sigma\rho}=|\xi_1+\xi_2+\xi_3|$. Then from (\ref{m/F-C0}) it follows in the same way as (\ref{WL22}) that
\[
\|\Lambda^{M+2}P_kH^{\mu\nu\rho}_{k_1,k_2,k_3}\|_{L^2}
\lesssim 2^{2k+k_1+k_3+k_3^+}\|P_{k_1}U\|_{L^2}\|P_{k_2}U\|_{L^2}\|P_{k_3}U\|_{L^2}.
\]
We sum over $k_i\in\Z$, $k\le-6\max(j.\mathcal L)/7$ to get the same bound as Case 2.

{\bf Case 5:} $-6\max(j,\mathcal L)/7<k_i$, $l\le3\max(j,\mathcal L)/N+O(1)$, $1\le i\le 3$,
and if $m_{\mu\sigma}$ or $m_{\sigma\rho}=|\xi_1+\xi_2+\xi_3|$,
then $k>-6\max(j,\mathcal L)/7$.

{\bf Case 5.1:} $j\le\mathcal L+5$. We decompose
\[
H^{\mu\nu\rho\sigma}_{k_1,k_2,k_3,l}=\sum_{j_1,j_2,j_3} H^{\mu\nu\rho\sigma}_{j_1,k_1,j_2,k_2,j_3,k_3,l}
\]
as in Case 4.1 in the proof of Proposition \ref{W-Z}.
We now assume $j_1\le j_2\le j_3$ instead of $k_1\le k_2\le k_3$.
By Lemma \ref{paraprod}, (\ref{mk3-Soo}), Lemma \ref{dispersive},
unitarity of $e^{it\Lambda}$, H\"older's inequality and Lemma \ref{XZ-bound} (iv) we have (recall $K=\max(k,k_1,k_2,k_3)$)
\begin{align*}
&\|\Lambda^{M+2}P_kH^{\mu\nu\rho\sigma}_{j_1,k_1,j_2,k_2,j_3,k_3,l}(t)\|_{L^2}\lesssim 2^{k/N}\|\Lambda^{M+2}P_kH^{\mu\nu\rho\sigma}_{j_1,k_1,j_2,k_2,j_3,k_3,l}(t)\|_{L^{2N/(N+1)}}\\
\lesssim&2^{k/N+(M+11)K^+}\|e^{-it\mu\Lambda}P_{[k_1-1,k_1+1]}Q_{j_1}P_{k_1}V_\mu(t)\|_{L^{2N}}\|e^{-it\nu\Lambda}P_{[k_2-1,k_2+1]}Q_{j_2}P_{k_2}V_\nu(t)\|_{L^\infty}\\
\times&\|e^{-it\rho\Lambda}Q_{j_3}P_{k_3}V_\rho(t)\|_{L^2}\\
\lesssim&2^{k/N+(M+11)K^++2k_1^++2k_2^+}\frac{(t/R+1)^{4-2/N}}{(1+t)^{2-1/N}}\|Q_{j_1}P_{k_1}V(t)\|_{L^{2N/(2N-1)}}\|Q_{j_2}P_{k_2}V(t)\|_{L^1}\|Q_{j_3}P_{k_3}V(t)\|_{L^2}\\
\lesssim&2^{k/N+11K^++(j_1+j_2-2j_3)/3}\frac{(t/R+1)^{4-2/N}}{(1+t)^{2-1/N}}v_{j_1,k_1}(t)v_{j_2,k_2}(t)v_{j_3,k_3}(t),
\end{align*}
where $v_{j,k}=2^{(M+2)k^++2j/3}\|Q_jP_kV\|_{L^2}$.
By the AM-GM inequality and Lemma \ref{XZ-bound} (iii),
\begin{align*}
v_{j_1,k_1}v_{j_2,k_2}v_{j_3,k_3}&\lesssim v_{j_1,k_1}^3+v_{j_2,k_2}^3+v_{j_3,k_3}^3,\\
\|v_{j,k}\|_{\ell^3_j}&\le\|v_{j,k}\|_{\ell^2_j}\lesssim\|V\|_Z\lesssim\ep_1.
\end{align*}
We sum over $j_1,j_2,j_3\ge0$ to get
\[
\|\Lambda^{M+2}P_kH^{\mu\nu\rho\sigma}_{k_1,k_2,k_3,l}(t)\|_{L^2}
\lesssim 2^{k/N+11K^+}\frac{(t/R+1)^{4-2/N}}{(1+t)^{2-1/N}}\ep_1^3.
\]
We then sum over $-\mathcal L-5<k_i$, $l\le3\mathcal L/N+O(1)$ and $k\le3\mathcal L/N+O(1)$ to get
\[
\sum_{-\mathcal L-5<k,k_i,l\le3\mathcal L/N+O(1)}
\|\Lambda^{M+2}P_kH^{\mu\nu\rho\sigma}_{k_1,k_2,k_3,l}(t)\|_{L^2}
\lesssim \frac{(t/R+1)^{4-2/N}}{(1+t)^{2-35/N}}\ep_1^3.
\]
Then we sum over $j\le\min(\mathcal L,\log R)+5$ and use $N\ge106$ to get
\begin{align*}
&\left\| \sum_{-\mathcal L-5<k,k_i,l\le3\mathcal L/N+O(1)} 2^{2j/3}\|Q_j\Lambda^{M+2}P_kH^{\mu\nu\rho\sigma}_{k_1,k_2,k_3,l}(t)\|_{L^2} \right\|_{\ell^2_{0\le j\le\mathcal L_R+5}}\\
\lesssim&(t^{2+33/N}/R^{10/3-2/N}+(1+t)^{-1.003})\ep_1^3.
\end{align*}

{\bf Case 5.2:} $j>\mathcal L+5$. We decompose
\begin{align*}
H^{\mu\nu\rho\sigma}_{k_1,k_2,k_3,l}
&=A^{\mu\nu\rho\sigma}_{k_1,k_2,k_3,l}+B^{\mu\nu\rho\sigma}_{k_1,k_2,k_3,l},\\
A^{\mu\nu\rho\sigma}_{k_1,k_2,k_3,l}
&=H^{\mu\nu\rho\sigma}_l[P_{[k_1-1,k_1+1]}Q_{\ge j-4}P_{k_1}V_\mu,P_{[k_2-1,k_2+1]}Q_{\ge j-4}P_{k_2}V_\nu,P_{[k_3-1,k_3+1]}Q_{\ge j-4}P_{k_3}V_\rho],\\
B^{\mu\nu\rho\sigma}_{k_1,k_2,k_3,l}
&=\sum_{I_1,I_2,I_3\in\{\ge j-4,\le j-5\}\atop\exists I_i=``\le j-5"}
B^{\mu\nu\rho\sigma}_{I_1,k_1,I_2,k_2,I_3,k_3,l},\\
B^{\mu\nu\rho\sigma}_{I_1,k_1,I_2,k_2,I_3,k_3,l}
&=H^{\mu\nu\rho\sigma}_l[P_{[k_1-1,k_1+1]}Q_{I_1}P_{k_1}V_\mu,
P_{[k_2-1,k_2+1]}Q_{I_2}P_{k_2}V_\nu,P_{[k_3-1,k_3+1]}Q_{I_3}P_{k_3}V_\rho].
\end{align*}

For $A$ we have, by (\ref{H3k-L2}), unitarity of $e^{it\Lambda}$ and Lemma \ref{XZ-bound} (iv),
\begin{align*}
\|\Lambda^{M+2}A^{\mu\nu\rho\sigma}_{k_1,k_2,k_3,l}\|_{L^2}
&\lesssim 2^{k_1+k_2+(M+3)k_3^++2k_3}\|Q_{\ge j-3}P_{k_1}V\|_{L^2}\|Q_{\ge j-3}P_{k_2}V\|_{L^2}\|Q_{\ge j-3}P_{k_3}V\|_{L^2}\\
&\lesssim 2^{-(M+2)(k_1^++k_2^+)+k_3^++k_1+k_2+2k_3-2j}\ep_1^3.
\end{align*}
We sum over $k_1$, $k_2\in\Z$, $-6j/7<k_3$, $l\le3j/N+O(1)$ and $j>\mathcal L+5$, and use $N\ge28$ to get
\begin{align*}
\left\| \sum_{-6j/7<k_i,l\le3j/N+O(1)}
2^{2j/3}\|Q_j\Lambda^{M+2}A^{\mu\nu\rho\sigma}_{k_1,k_2,k_3,l}(t)\|_{L^2} \right\|_{\ell^2_{j>\mathcal L+5}}
&\lesssim\sum_{j>\mathcal L+5} j2^{(9/N-4/3)j}\ep_1^3\\
&\lesssim (1+t)^{-1.01}\ep_1^3.
\end{align*}

To bound $B$, we assume $R>2^{j-1/2}$ as before, and write
\begin{align*}
&\sum_{\mu,\nu,\rho,\sigma=\pm}
\Lambda^{M+2}B^{\mu\nu\rho\sigma}_{I_1,k_1,I_2,k_2,I_3,k_3,l}(x,t)
=\sum_{\mu,\nu,\rho=\pm}\\
&\int G(x,y,z,w,t)Q_{I_1}P_{k_1}V_\mu(y,t)
Q_{I_2}P_{k_2}V_\nu(z,t)Q_{I_3}P_{k_3}V_\rho(w,t)dydydw,\\
&G(x,y,z,w,t)=\sum_{y',z',w'\in(R\Z)^2} \mathcal K(x,y+y',z+z',w+w',t),\\
&\mathcal K(x,y,z,w,t)=\\
&\int e^{i\phi_{\mu\nu\rho}(\xi_1,\xi_2,\xi_3)}\Lambda(\xi_1+\xi_2+\xi_3)^{M+2}m^{\mu\nu\rho}_{k_1,k_2,k_3,l}(\xi_1,\xi_2,\xi_3)d\xi_1d\xi_2d\xi_3,\\
&\Phi_{\mu\nu\rho}(\xi_1,\xi_2,\xi_3)=\Lambda(\xi_1+\xi_2+\xi_3)-\mu\Lambda(\xi_1)-\nu\Lambda(\xi_2)-\rho\Lambda(\xi_3),\\
&\phi_{\mu\nu\rho}(\xi_1,\xi_2,\xi_3)=(x-y)\cdot\xi_1+(x-z)\cdot\xi_2+(x-w)\cdot\xi_3+t\Phi_{\mu\nu\rho}(\xi_1,\xi_2,\xi_3),\\
&m^{\mu\nu\rho}_{k_1,k_2,k_3,l}(\xi_1,\xi_2,\xi_3)=\varphi_l(\xi_2+\xi_3)m_{\mu\nu\rho}(\xi_1,\xi_2,\xi_3)\varphi_{[k_1-1,k_1+1]}(\xi_1)\varphi_{[k_2-1,k_2+1]}(\xi_2)\varphi_{[k_3-1,k_3+1]}(\xi_3)\\
&\text{or }\varphi_l(\xi_1+\xi_2)m_{\mu\nu\rho}(\xi_1,\xi_2,\xi_3)\varphi_{[k_1-1,k_1+1]}(\xi_1)\varphi_{[k_2-1,k_2+1]}(\xi_2)\varphi_{[k_3-1,k_3+1]}(\xi_3).
\end{align*}

From (\ref{m-CL}), $|\nabla^L(\varphi_k\Lambda^M)|\lesssim_L 2^{Mk^+-Lk}$ and (\ref{1/F-CL}) it follows that if $m_{\mu\sigma}$ or $m_{\sigma\rho}\neq|\xi_1+\xi_2+\xi_3|$ then
\[
\left| \nabla^L \left( \Lambda(\xi_1+\xi_2+\xi_3)^{M+2}m^{\mu\nu\rho}_{k_1,k_2,k_3,l}(\xi_1,\xi_2,\xi_3) \right) \right|
\lesssim_L 2^{(M+3)k_3^++2k_3}(1+2^{-L\min(k_i,l)}).
\]
Since $-6j/7<k_i$, $l\le3j/N+O(1)$,
\[
\|\Lambda(\xi_1+\xi_2+\xi_3)^{M+2}m^{\mu\nu\rho}_{k_1,k_2,k_3,l}(\xi_1,\xi_2,\xi_3)\|_{W^{63,1}}\lesssim 2^{(M+3)k_3^++4k_3+2k_2+2k_1+54j}.
\]
Again $|t\nabla\Phi_{\mu\nu\rho}(\xi_1,\xi_2,\xi_3)|<2^{j-3}$,
so by Lemma \ref{int-part-cor} ($L=57$ and $d=6$),
\[
\|1_{\max(|x-y|,|x-z|,|x-w|)>2^{j-2}}\mathcal K(x,y,z,w,t)\|_{L^1_{y,z,w}}
\lesssim 2^{(M+3)k_3^++4k_3+2k_2+2k_1-3j}.
\]
Using $N\ge M+9$ we argue as Case 4.2 of Proposition \ref{W-Z} to get
(note an extra but harmless factor of $j$ coming from summation in $l$)
\[
\left\| \sum_{-6j/7<k_i,l\le3j/N+O(1)}
2^{2j/3}\|Q_j\Lambda^{M+2}B^{\mu\nu\rho\sigma}_{k_1,k_2,k_3,l}(t)\|_{L^2} \right\|_{\ell^2_{j>\mathcal L+5}}
\lesssim (1+t)^{-5/4}\ep_1^3.
\]
If $m_{\mu\sigma}$ or $m_{\sigma\rho}=|\xi_1+\xi_2+\xi_3|$ then
\begin{align*}
&\left| \nabla^L \left( \varphi_k(\xi_1+\xi_2+\xi_3)\Lambda(\xi_1+\xi_2+\xi_3)^{M+2}m^{\mu\nu\rho}_{k_1,k_2,k_3,l}(\xi_1,\xi_2,\xi_3) \right) \right|\\
\lesssim_L&2^{(M+3)k_3^++2k_3}(1+2^{-L\min(k,k_i,l)})
\end{align*}
but the additional assumption $k>-6j/7$ guarantees that we still have
\[
\|1_{\max(|x-y|,|x-z|,|x-w|)>2^{j-2}}P_k^x\mathcal K(x,y,z,w,t)\|_{L^1_{y,z,w}}
\lesssim 2^{(M+3)k_3^++4k_3+2k_2+2k_1-3j},
\]
where $P_k^x$ is the Liitlewood-Paley projection with respect to the $x$ variable. Now the desired bound also follows from summing over $|k|$, $|l|$ and $|k_i|\lesssim j$.

Combining Case 1 through Case 5 above gives
\[
\|H_{\mu\nu\rho}(t)\|_Z
\lesssim (t^{1/5}/R^{4/3}+t^{2+33/N}/R^{10/3-2/N}+(1+t)^{-1.003})\ep_1^3.
\]
Integration in $t$ shows Proposition \ref{H-Z}.
\end{proof}

\end{document}